\newtheorem{theorem}{Theorem}[section]
\newtheorem{lemma}[theorem]{Lemma}
\newtheorem{proposition}[theorem]{Proposition}
\newtheorem{corollary}[theorem]{Corollary}
\newtheorem{conjecture}[theorem]{Conjecture}
\newtheorem{hypothesis}[theorem]{Hypothesis}
\theoremstyle{definition}
\newtheorem{definition}[theorem]{Definition}
\newtheorem{convention}[theorem]{Convention}
\newtheorem{example}[theorem]{Example}
\newtheorem{question}[theorem]{Question}
\newtheorem{remark}[theorem]{Remark}
\numberwithin{equation}{section}
\newcommand{\Natural}{{\mathbb N}}
\newcommand{\Real}{{\mathbb R}}
\newcommand{\Rational}{{\mathbb Q}}
\newcommand{\Complex}{{\mathbb C}}
\newcommand{\Integral}{{\mathbb Z}}
\newcommand{\Sph}{{\mathbb S}}
\newcommand{\Hyp}{{\mathbb H}}
\newcommand{\univ}{{\mathrm{univ}}}
\newcommand{\algcl}{{\mathrm{ac}}}
\newcommand{\stable}{{\mathtt{st}}}
\newcommand{\ud}{{\mathrm{d}}}
\title[{Finite quotients, arithmetic invariants, and hyperbolic volume}]{
Finite quotients, arithmetic invariants,\\ and hyperbolic volume}
\author[Yi Liu]{%
        Yi Liu} 
\address{%
        Beijing International Center for Mathematical Research, Peking University\\
				Beijing 100871, China P.R.} 
\email{%
    liuyi@bicmr.pku.edu.cn}
\thanks{Partially supported by NSFC Grant 11925101, 
and National Key R\&D Program of China 2020YFA0712800}
\subjclass[2020]{Primary 57M50; Secondary 57M10, 30F40, 20E18}
\keywords{profinite completion, hyperbolic geometry, 3-manifolds, fixed point theory}
\date{%
 \today} 
\begin{document}

\begin{abstract}
	For any pair of orientable closed hyperbolic $3$--manifolds,
	this paper shows that any isomorphism between
	the profinite completions of their fundamental groups
	witnesses a bijective correspondence between 
	the Zariski dense $\mathrm{PSL}(2,\mathbb{Q}^{\mathtt{ac}})$--representations 
	of their fundamental groups, up to conjugacy;
	moreover, corresponding pairs of representations
	have identical invariant trace fields and isomorphic invariant quaternion algebras.
	(Here, $\mathbb{Q}^{\mathtt{ac}}$ denotes an algebraic closure of $\mathbb{Q}$.)
	Next,
	assuming the $p$--adic Borel regulator injectivity conjecture for number fields,
	this paper shows that
	uniform lattices in $\mathrm{PSL}(2,\mathbb{C})$
	with isomorphic profinite completions
	have identical invariant trace fields,
	isomorphic invariant quaternion algebras,
	identical covolume, and identical arithmeticity.
\end{abstract}

\maketitle

\section{Introduction}

Hyperbolic $3$--manifolds of finite volume
have abundant finite covers, and have strong rigidity. 
The latter fact opens the door for studying fundamental groups of those manifolds
by means of geometry and arithmetics.
The hyperbolic volume, the invariant trace field,
and the invariant quaternion algebra are 
among the most important invariants,
not only in theory, but also in practical computation.
Arithmetic hyperbolic $3$--manifolds and non-arithmetic ones
exhibit contrast behaviors in many group theoretic aspects.
The former fact, in addition,
makes it reasonable to ask 
if the fundamental group could possibly be inferred
from knowledge of its finite quotients.
This challenging question
has attracted growing interest over the past decade.
Its pertaining topic
is called profinite properties of $3$--manifold groups.

For any (abstract) group $G$, 
the \emph{profinite completion} of $G$ refers to 
the group $\widehat{G}=\varprojlim_{N} G/N,$ 
where $N$ ranges over the inverse system of all finite-index normal subgroups of $G$.
When $G$ is finitely generated and residually finite,
the completion homomorphism $G\to\widehat{G}$ is injective,
and the profinite topology on $\widehat{G}$
can be characterized purely in abstract group theoretic terms \cite{Nikolov--Segal};
moreover, 
the group isomorphism type of $\widehat{G}$
can be recovered purely from
the set of isomorphism classes of all the finite quotient groups of $G$
\cite[Theorem 2.2]{Reid_survey}.
These facts make finitely generated residually finite groups
arguably the most comfortable class for speaking of profinite properties.
There are some smaller classes of groups, restricted to which
one might expect richer harvest.
Finitely generated virtually nilpotent groups, 
semisimple Lie group lattices, 
and finitely generated $3$--manifold groups 
are a few candidates on top of our mind.

Some striking examples of arithmetic lattices in $\mathrm{PSL}(2,\Complex)$
and $\mathrm{O}^+(3,1)$ with absolute profinite rigidity
have been confirmed recently 
by M.~R.~Bridson, D.~B.~McReynolds, A.~W.~Reid, and R.~Spitler \cite{BMRS}.
Their examples include the Bianchi group 
$\mathrm{PSL}(2,\Integral[(-1+\sqrt{-3})/2])$,
and the fundamental group of the Fomenko--Matveev--Weeks manifold, 
which is the unique closed orientable hyperbolic $3$--manifold of the smallest volume.
Their work shows that the isomorphism types of those groups 
are uniquely determined by the isomorphism types of their profinite completions,
among finitely generated, residually finite groups.
Restricted to the class of finitely generated $3$--manifold groups,
(which can be characterized as
fundamental groups of connected compact $3$--manifolds,)
there are examples with profinite rigidity \cite{BMRS,BRW,Bridson--Reid},
and also examples without profinite rigidity \cite{Funar_torus_bundles,Hempel_quotient,Stebe_integer_matrices}.
However, all known examples satisfy a weaker condition,
which we call \emph{profinite almost rigidity}.
This condition says that,
among finitely generated $3$--manifold groups,
the isomorphism type of the group
is determined up to finitely many possibilities
 by the isomorphism of the profinite completion.
G.~Wilkes shows that graph manifold groups are profinitely almost rigid \cite{Wilkes_sf,Wilkes_graph,Wilkes_graph_II}.
In a recent work \cite{Liu_profinite_almost_rigidity},
the author shows that finite-volume hyperbolic $3$--manifold groups are profinitely almost rigid.
It seems plausible that finitely generated $3$--manifold groups are all profinitely almost rigid.
On the other hand, 
we recall the following question raised by A.~W.~Reid
\cite[Question 9]{Reid_discrete},
which remains widely open.
Its torsion-free case is equivalent to
the (unknown) profinite rigidity 
among orientable finite-volume hyperbolic $3$--manifolds.

\begin{question}\label{lattice_profinite_rigidity}
	If a pair of lattices in $\mathrm{PSL}(2,\Complex)$ have isomorphic profinite completions,
	must they be isomorphic to each other?
\end{question}

The present paper may be viewed as a continuation of \cite{Liu_profinite_almost_rigidity}
and an attempt toward Question \ref{lattice_profinite_rigidity}.
We seek for a connection between 
dynamics of pseudo-Anosov suspension flow
and arithmetic invariants of $\mathrm{SL}(2,\Complex)$--representations.
In the end,
we are able to use this connection,
and reduce the profinite invariance problem of hyperbolic volume and arithmeticity
to a long-standing conjecture of number theoretic nature,
that is, 
the $p$--adic Borel regulator injectivity conjecture for number fields.

For any number field $k$ with $r_1$ real places and $r_2$ complex places,
the zeroth K-group $K_0(O_k)$ of the ring of integers $O_k$ 
can be identified with the ideal class group of $O_k$ direct sum with $\Integral$,
and the first K-group $K_1(O_k)$ can be identified with 
the multiplicative unit group $O_k^\times$.
In particular, the real dimension of 
$K_n(O_k)\otimes_\Integral\Real$ 
equals $1$ for $n=0$, by the class number finiteness,
and equals $r_1+r_2-1$ for $n=1$, by the Dirichlet unit theorem.
A.~Borel shows that the rank of higher K-groups obeys periodicity,
and indeed, when $n\geq2$,
the real dimension of 
$K_n(k)\otimes_\Integral\Real\cong K_n(O_k)\otimes_\Integral\Real$ equals $0,r_1+r_2,0,r_2$
for $n\equiv 0,1,2,3$ modulo $4$, respectively.
For the nonvanishing dimensions, the classical Borel regulator maps
associated to $k$ can be reinterpreted as $\Real$--linear homomorphisms
$K_{4m+1}(k)\otimes_\Integral\Real\to\Real^{r_1}\oplus\Complex^{r_2}$
and $K_{4m-1}(k)\otimes_\Integral\Real\to\Complex^{r_2}$,
which are actually injective onto $\Real^{r_1}\oplus\Real^{r_2}$ and $\Real^{r_2}$,
respectively.
For any rational prime $p$,
the $p$--adic Borel regulator maps associated to $k$
can be defined analogously,
and can be thought of as
$\Rational_p$--linear homomorphisms 
$K_{2m-1}(k)\otimes_\Integral\Rational_p\to \bigoplus_{v|p} k_v$.
We state the \emph{$p$--adic Borel regulator injectivity conjecture} as follows.
The conjecture is expected to be true to the generality as stated.
(See Section \ref{Sec-prelim_I} for more detail.)

\begin{conjecture}\label{Borel_hypothesis_p_EZ}
	For any number field $k$ and any rational prime $p$,
	the $p$--adic Borel regulator map 
	is injective on 
	$K_{2m-1}(k)\otimes_\Integral \Rational_p$
	for every odd dimension $2m-1\geq3$. 
\end{conjecture}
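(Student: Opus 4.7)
The plan is to factor the $p$--adic Borel regulator through continuous \'etale cohomology and thereby reduce its injectivity to a question about Bloch--Kato Selmer groups. First I would invoke Soul\'e's $p$--adic Chern class
\[
c_{m,p}\colon K_{2m-1}(O_k)\otimes_{\Integral}\Rational_p \;\longrightarrow\; H^1_{\text{\'et}}\bigl(\mathrm{Spec}\,O_k[1/p],\,\Rational_p(m)\bigr),
\]
which is an isomorphism for every $m\geq 2$ by the Quillen--Lichtenbaum conjecture, established as a consequence of the Voevodsky--Rost norm residue theorem; note that the natural map $K_{2m-1}(O_k)\otimes\Rational_p\to K_{2m-1}(k)\otimes\Rational_p$ is also an isomorphism in this range, so it does not matter which of the two groups one starts with. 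This reduces the statement to a purely Galois-cohomological one.

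Next I would identify the $p$--adic Borel regulator, under this factorization, with the composite of the global-to-local restriction
\[
H^1_{\text{\'et}}(O_k[1/p],\Rational_p(m)) \;\longrightarrow\; \bigoplus_{v\mid p} H^1(k_v,\Rational_p(m))
\]
and, on each factor, the inverse of the Bloch--Kato exponential $\exp_{\mathrm{BK}}\colon k_v \xrightarrow{\sim} H^1_f(k_v,\Rational_p(m))$, which is an isomorphism for $m\geq 1$ because $\mathrm{Fil}^0 D_{\mathrm{dR}}(\Rational_p(m))$ vanishes and $D_{\mathrm{cris}}(\Rational_p(m))^{\phi=1}=0$. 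Classes coming from global \'etale cohomology are crystalline at every place above $p$, hence land in $H^1_f$ there, so the identification is well defined. That the $p$--adic Borel regulator does admit such a description through the Soul\'e regulator is a theorem of Huber--Kings and Tamme, in slightly different guises.

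The main obstacle is then the injectivity of the global-to-local map itself. Its kernel is the ``fine'' Selmer group consisting of classes that are unramified outside $p$ and vanish locally at every place above $p$, and by Poitou--Tate duality this kernel is paired with a global $H^2$ of the dual Tate twist $\Rational_p(1-m)$. Triviality of this kernel is a form of the Bloch--Kato/Beilinson conjecture for the motive $\Rational_p(m)$ over $k$, and in many cases it is equivalent to the corresponding Main Conjecture of Iwasawa theory. For $k$ abelian over $\Rational$, and in various CM settings, this is accessible through the work of Mazur--Wiles, Wiles, Kato, Rubin, Burns, and others; in full generality it remains open, and I expect the hard part of any proof to lie essentially at this Iwasawa-theoretic step. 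A natural attempted route would be to construct a suitable Euler system for $\Rational_p(m)$ over an arbitrary number field, extending the Beilinson--Kato elements beyond the abelian case; short of that, one might hope to combine known vanishing results at the totally real level with descent from the cyclotomic tower to cover enough cases to be useful in the geometric applications of the present paper.
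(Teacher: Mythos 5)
The statement you were asked to prove is Conjecture~\ref{Borel_hypothesis_p_EZ}, which the paper explicitly leaves open: it is introduced as a conjecture, described in the introduction as a ``long-standing conjecture of number theoretic nature,'' and thereafter only \emph{assumed} (in the restricted form Hypothesis~\ref{Borel_hypothesis_p}) as an input to Theorem~\ref{main_vol_EZ}. In Remark~\ref{Borel_hypothesis_p_remark} the paper identifies it with \cite[Conjecture 2.9]{Calegari_stable} and points to Calegari for hypothetical criteria and partial evidence, but it offers no proof. There is therefore no argument in the paper to compare your proposal against.

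That said, your reduction is the standard one and is correctly assembled. Soul\'e's $p$--adic Chern class is injective unconditionally and an isomorphism for $m\geq2$ by Quillen--Lichtenbaum (via Voevodsky--Rost); the Bloch--Kato exponential identifies $k_v$ with $H^1_f(k_v,\Rational_p(m))$ for $m\geq1$ because both $\mathrm{Fil}^0 D_{\mathrm{dR}}(\Rational_p(m))$ and $D_{\mathrm{cris}}(\Rational_p(m))^{\phi=1}$ vanish; and the Huber--Kings comparison (extended by Tamme) shows this composite recovers the $p$--adic Borel regulator. You also locate the genuine obstruction precisely: injectivity of the global-to-local restriction, equivalently vanishing of the fine Selmer group of $\Rational_p(m)$ over $k$, a Schneider/Leopoldt-type statement that is open outside abelian and certain CM cases where Iwasawa main conjecture and Euler system techniques apply. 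So what you have written is a faithful reduction and an honest assessment of where the difficulty lies, not a proof --- and none should be expected, since the statement is an open conjecture and the paper never claims otherwise.
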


\begin{theorem}\label{main_vol_EZ}
	If Conjecture \ref{Borel_hypothesis_p_EZ} holds true,
	then,
	among uniform lattices in $\mathrm{PSL}(2,\Complex)$,
	the following lattice invariants
	are all determined by the group isomorphism type of the profinite completion:
	the invariant trace field in $\Complex$, 
	the conjugacy class of the invariant quaternion algebra in $\mathrm{Mat}(2,\Complex)$,
	the	covolume,	and arithmeticity.
\end{theorem}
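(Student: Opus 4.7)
The plan is to bootstrap from the unconditional $\mathrm{PSL}(2,\mathbb{Q}^{\mathtt{ac}})$-representation correspondence in the abstract, translate hyperbolic volume into a rational class in $K_{3}^{\mathrm{ind}}$ of the invariant trace field through the Bloch--Neumann invariant, and use Conjecture \ref{Borel_hypothesis_p_EZ} to pin that class down from its $p$-adic Borel regulators, which should be profinitely visible.

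First I would reduce to closed orientable hyperbolic $3$-manifolds. Given $\Phi:\widehat{\Gamma}_1\xrightarrow{\sim}\widehat{\Gamma}_2$, choose a torsion-free normal finite-index subgroup $\Gamma_1^\circ\triangleleft\Gamma_1$, transport its closure through $\Phi$, and pull back into $\Gamma_2$ to get a torsion-free $\Gamma_2^\circ\triangleleft \Gamma_2$ of the same index with $\widehat{\Gamma_1^\circ}\cong \widehat{\Gamma_2^\circ}$. All four invariants are commensurability invariants (covolume scales by the profinitely visible index), so it suffices to prove the theorem for $\pi_1(M_i)=\Gamma_i^\circ$.

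Next, apply the unconditional theorem. The discrete faithful representation $\rho_1:\pi_1(M_1)\to\mathrm{PSL}(2,\mathbb{C})$ is conjugate into $\mathrm{PSL}(2,\mathbb{Q}^{\mathtt{ac}})$ because $k\rho_1$ is a number field, so the induced bijection sends $[\rho_1]$ to a Zariski-dense $[\rho_2']$ with the same invariant trace field $k\subset\mathbb{C}$ and (up to conjugation) invariant quaternion algebra $A\subset\mathrm{Mat}(2,\mathbb{C})$. To each such Zariski-dense representation, Neumann's extension of the Dupont--Sah--Bloch construction attaches a Bloch class $\beta(\rho)\in K_{3}^{\mathrm{ind}}(k)\otimes_{\mathbb{Z}}\mathbb{Q}$ whose image under the complexified real Borel regulator is the complex volume $i\cdot\mathrm{vol}(\rho)+\mathrm{cs}(\rho)$; in particular $\mathrm{vol}(\rho_1)=\mathrm{vol}(M_1)$. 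I would then show that the $p$-adic Borel regulators of $\beta(\rho_1)$ and $\beta(\rho_2')$ in $\bigoplus_{v\mid p}k_v$ agree for every rational prime $p$. Granting this, Conjecture \ref{Borel_hypothesis_p_EZ} in dimension $2m-1=3$ forces $\beta(\rho_1)=\beta(\rho_2')$ in $K_{3}^{\mathrm{ind}}(k)\otimes\mathbb{Q}$, and the real Borel regulator yields $\mathrm{vol}(\rho_1)=\mathrm{vol}(\rho_2')$. Thurston--Gromov volume rigidity gives $|\mathrm{vol}(\rho_2')|\leq \mathrm{vol}(M_2)$ with equality only when $\rho_2'$ is conjugate to the discrete faithful $\rho_2$ up to complex conjugation; the symmetric application through $\Phi^{-1}$ produces equality, identifying $\rho_2'$ with $\rho_2$ and hence forcing $k\pi_1(M_2)=k$ and $A\pi_1(M_2)\cong A$. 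The standard Maclachlan--Reid criterion then reduces arithmeticity of $M_i$ to the ramification pattern of $(k,A)$ plus the integrality of traces on $\pi_1(M_i)^{(2)}$, which are shared by the two now-identified discrete faithful images.

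The hard part is the middle $p$-adic identification: realizing the $p$-adic Borel regulator of $\beta(\rho)$ as data intrinsic to $\widehat{\pi_1(M)}$, so that $\Phi$ automatically equates it for $\beta(\rho_1)$ and $\beta(\rho_2')$. This calls for a profinitely stable reading of the Bloch class through \'etale data such as $H^{1}_{\mathrm{et}}(\mathcal{O}_k[1/p],\mathbb{Q}_p(2))$, and is exactly where the pseudo-Anosov suspension-flow machinery announced in the introduction must enter---it constitutes the genuine bridge between $3$-dimensional topology and arithmetic $K$-theory that the paper is building.
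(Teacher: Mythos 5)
Your high-level architecture matches the paper's closely: reduce to closed hyperbolic $3$--manifolds, invoke the unconditional $\mathrm{PSL}(2,\Rational^{\algcl})$--representation correspondence, upgrade agreement of $p$--adic Borel regulators to agreement of rational $K_3$ classes via Conjecture~\ref{Borel_hypothesis_p_EZ}, and finish with the classical Borel regulator, volume rigidity (the Besson--Courtois--Gallot / Kim--Kim inequality), and the Maclachlan--Reid arithmeticity criterion. Packaging the $K$--theoretic content through the Bloch/Neumann--Yang class rather than through $\rho_*[M]\in H_3(\mathrm{SL}(N,K);\Integral)\otimes\Rational$ is a cosmetic re-framing of the same object.

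However, there is a genuine gap in your middle step, and you misdiagnose how to close it. You need to show that $\Phi$ equates the $p$--adic Borel regulators of $\beta(\rho_1)$ and $\beta(\rho_2')$, and you suggest this requires an \'etale-cohomological refinement (through $H^1_{\mathrm{et}}(\mathcal{O}_k[1/p],\Rational_p(2))$) or the pseudo-Anosov suspension-flow machinery. Neither is what is actually needed. In the paper the suspension-flow machinery is spent entirely on establishing the unconditional representation correspondence that you already take as given; the $p$--adic identification itself (Lemma~\ref{profinite_invariance_vol_pA}) is comparatively formal. The point you are missing is that by construction $\rho_A$'s $\mathfrak{P}$--adic completion literally \emph{is}, up to $\mathrm{GL}(2,O_{\mathfrak{P}})$--conjugation, the $\Psi$--pullback of $\rho_B$'s $\mathfrak{P}$--adic completion (the fourth bullet of Lemma~\ref{profinite_correspondence_rho_pA}). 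Combined with cohomological goodness of $3$--manifold groups, which gives $H^3_{\mathtt{cont}}(\mathrm{Fratt}_2(\widehat{\pi}_{A/B});K_p)\cong H^3(\mathrm{Fratt}_2(\pi_{A/B});K_p)$ and identifies the continuous fundamental classes $[M^{\natural}_A]_p$ and $\pm[M^{\natural}_B]_p$ under $\Psi_*$, the commutative diagram (\ref{vol_diagram_cohomology_O}) forces the pairings of the $p$--adic Borel regulator class $b_p\in\bigoplus_{\mathfrak{P}}H^3_{\mathtt{la}}(\mathrm{GL}(7,O_{\mathfrak{P}});K_{\mathfrak{P}})$ with the two fundamental classes to agree. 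No \'etale input is used; it is a diagram chase through locally analytic, continuous, and abstract group cohomology plus the Lazard isomorphism. Your proposal is correct in outline, but this step is the content of the theorem, and ``exactly where the suspension-flow machinery must enter'' is not where it actually enters.

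One further small correction: the paper arranges for the representations $\tilde\rho_{A/B}$ to land in $\mathrm{SL}(2,O_{\{p\}})$ so that the fundamental class lives in $H_3(\mathrm{SL}(7,O_{\{p\}});\Integral)$, and only needs Hypothesis~\ref{Borel_hypothesis_p} (injectivity on $K_3\otimes\Rational_p$ for infinitely many $p$, so that a good $p$ avoiding the finitely many bad primes of boundedness can be chosen) rather than the conjecture for all $p$ as your phrasing suggests; and the sign ambiguity $\pm$ that remains after applying the real regulator is harmless precisely because of the volume rigidity step you already have.
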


See Theorem \ref{main_profinite_invariance_vol} in Section \ref{Sec-main_theorems} for the precise statement.
The proof of Theorem \ref{main_vol_EZ} actually only needs the injectivity assumption
on $K_3(k)\otimes_\Integral\Rational_p$ and for infinitely many $p$
(Hypothesis \ref{Borel_hypothesis_p}).
It should be possible to extend Theorem \ref{main_vol_EZ} to the non-uniform case,
if one invokes works of H.~Wilton and P.~A.~Zalesskii \cite{WZ_geometry,WZ_decomposition},
and keeps track of the parabolic subgroups more closely.
However, we do not pursue the generalization in this paper, 
as the conclusion of Theorem \ref{main_vol_EZ} is already hypothetical.
Note that Theorem \ref{main_vol_EZ} can be derived immediately from its torsion-free case,
so it is essentially about orientable closed hyperbolic $3$--manifolds
with profinitely isomorphic fundamental groups.

\begin{theorem}\label{main_rep_EZ}
	If a pair of orientable closed hyperbolic $3$--manifolds
	have profinitely isomorphic fundamental groups,
	then
	the Zariski dense $\mathrm{PSL}(2,\Rational^\algcl)$--representations
	of their fundamental groups
	can be paired up bijectively up to conjugacy.
	Moreover, corresponding pairs of representations
	have identical invariant trace fields in $\Rational^\algcl$,
	and
	have conjugate invariant quaternion algebras in $\mathrm{Mat}(2,\Rational^\algcl)$.
\end{theorem}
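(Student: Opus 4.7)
The plan is to reconstruct each Zariski dense representation $\rho_1:\Gamma_1\to \PSL(2,\Rational^{\algcl})$ from its local profinite reductions, then transport the data across the given isomorphism $\Phi:\widehat{\Gamma_1}\xrightarrow{\sim}\widehat{\Gamma_2}$, where $\Gamma_i=\pi_1(M_i)$. After conjugating, $\rho_1$ factors through $\PSL(2,O_k[1/N])$ for some number field $k\subset \Rational^{\algcl}$ containing the trace field of $\rho_1$ and some positive integer $N$. For each finite place $v$ of $k$ coprime to $N$, the local group $\PSL(2,O_{k,v})$ is profinite, so the composition $\Gamma_1\to \PSL(2,O_k[1/N])\hookrightarrow \PSL(2,O_{k,v})$ extends uniquely to a continuous homomorphism $\widehat{\rho}_{1,v}:\widehat{\Gamma_1}\to \PSL(2,O_{k,v})$.

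I would then define $\widehat{\rho}_{2,v}:=\widehat{\rho}_{1,v}\circ\Phi^{-1}:\widehat{\Gamma_2}\to \PSL(2,O_{k,v})$ and let $\rho_{2,v}$ denote its restriction to $\Gamma_2\hookrightarrow\widehat{\Gamma_2}$. The next task is to assemble the family $\{\rho_{2,v}\}_v$ into a single representation $\rho_2:\Gamma_2\to \PSL(2,\Rational^{\algcl})$. Strong approximation for Zariski dense subgroups of $\SL_2$ over number fields (Weisfeiler--Nori) shows that the image of $\rho_1(\Gamma_1)$ is open in $\PSL(2,O_{k,v})$ for almost every $v$, so the residue-field reductions of $\widehat{\rho}_{1,v}$, and hence of $\widehat{\rho}_{2,v}$, yield meaningful finite quotients whose trace data transport across $\Phi$. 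By combining infinitely many primes, one attempts to pin down each trace $\mathrm{tr}(\rho_2(\gamma))$ as an element of $k$.

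The invariant trace field and invariant quaternion algebra of a Zariski dense $\PSL_2$-representation depend only on its trace function, so the trace-by-trace reconstruction automatically preserves both. Bijectivity of the correspondence $\rho_1\leftrightarrow\rho_2$ follows by running the construction symmetrically with $\Phi$ and $\Phi^{-1}$.

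The principal obstacle is the assembly step: one must check that the adelic family of local traces really arises from a single algebraic number in $k$, and that the resulting trace function on $\Gamma_2$ satisfies the Fricke relations so as to come from an honest $\PSL(2,\Rational^{\algcl})$-representation with Zariski dense image. This is a rigidity statement for the $\PSL_2$-character variety of $\Gamma_2$ relative to its profinite completion, and it is where the specific topology of closed hyperbolic $3$-manifolds---plausibly via Agol's virtual fibration combined with pseudo-Anosov fixed-point dynamics, as the paper's keywords suggest---should play a central role.
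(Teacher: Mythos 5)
Your opening moves coincide with the paper's: start from a Zariski dense $\rho_B$, choose a number field $K$ and a place $\mathfrak{P}$ where $\rho_B$ is bounded, extend to a continuous representation of $\widehat{\pi}_B$, and pull back along $\Psi$ to get $\rho^{\Psi}_{A,\mathfrak{P}}\colon\widehat{\pi}_A\to\mathrm{SL}(2,K_{\mathfrak{P}})$. That much also appears (as the paper notes) in Bridson--McReynolds--Reid--Spitler, and the paper frames the same difficulty you do: a priori the characters of the pulled-back local representations need not be algebraic, need not agree across different places, and even if they did agree you'd still have to pin down the ramification of the resulting invariant quaternion algebra at the real places of the trace field. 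You correctly flag this as the ``principal obstacle,'' but you do not close it, so there is a genuine gap.

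Here is what the paper actually does to fill the gap, and why the speculations in your last paragraph don't yet amount to an argument. After using Agol's virtual fibering to pass to $\Psi$--corresponding finite covers that are pseudo-Anosov mapping tori, the key technical input is a \emph{bijective correspondence of periodic trajectories}: for corresponding fibered pairs there is a $\mu\in\widehat{\Integral}^\times$ (depending only on $\Psi$) such that $\Psi$ sends the closure of each periodic-trajectory conjugacy class $\hat{\mathbf{c}}_A$, raised to the power $\mu$, to $\hat{\mathbf{c}}_B$ (Lemma \ref{profinite_correspondence_pt}, drawn from the earlier almost-rigidity paper via profinite invariance of twisted Lefschetz numbers). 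The paper then upgrades the prior $\widehat{\Integral}^\times$--regularity result to $\mu^2=1$ (Lemma \ref{p_regularity}), using the resolution of McMullen's virtual homological spectral radius conjecture to produce a monic reciprocal non-cyclotomic Alexander polynomial and then invoking Mahler's $p$--adic Gelfond--Schneider theorem. With $\mu^2=1$, Lemma \ref{A_mu} gives $\mathrm{tr}(A^{\mu})=\mathrm{tr}(A)$ in $\mathrm{SL}(2,O_{\mathfrak{P}})$, so the pulled-back character agrees with $\chi_B$ on every $\Psi$--corresponding pair of periodic trajectories. Two new criteria then convert this partial trace information into the desired conclusions: Lemma \ref{trace_field_pt} shows the trace field is generated by character values on periodic trajectories (proved via a Markov-partition transition graph whose dynamical cycles exhaust the periodic orbits, together with $\mathrm{SL}_2$ trace relations), which simultaneously forces the trace values to lie in $k$ and to be independent of $\mathfrak{P}$; and Lemma \ref{quaternion_algebra_pt} shows that ramification of the quaternion algebra at a real place is detected by boundedness of the character on periodic trajectories, which is transported by the trajectory correspondence. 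Non-archimedean ramification is read off from boundedness of the local representations (Lemma \ref{bounded_character_property}). None of this is a consequence of strong approximation or Fricke relations; the ``trace-by-trace reconstruction automatically preserves both'' step in your sketch is precisely where you would need these three ingredients, and in particular the real-place ramification criterion does not follow from trace data alone without Lemma \ref{quaternion_algebra_pt}. You also elide the $\mathrm{PSL}_2$ vs.\ $\mathrm{SL}_2$ lifting issue; the paper handles it by restricting to the $2$--Frattini subgroup, where the $w_2$--obstruction dies by Poincar\'e duality mod $2$.
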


The notation $\Rational^\algcl$ means an algebraic closure of $\Rational$.
Note that Theorem \ref{main_rep_EZ} does not rely on Conjecture \ref{Borel_hypothesis_p_EZ}.
Theorem \ref{main_rep_EZ} is a main step toward Theorem \ref{main_vol_EZ}.
See Theorem \ref{main_profinite_correspondence_PSL} in Section \ref{Sec-main_theorems}
for the precise statement;
an effective description of the correspondence
can be found in the beginning paragraph of the proof of Theorem \ref{main_profinite_correspondence_PSL}.

Theorem \ref{main_rep_EZ} is inspired by \cite[Theorem 4.8]{BMRS}.
The Zariski denseness assumption is needed for legitimating the invariant quaternion algebra.
What we are able to get rid of is the Galois rigidity assumption from \cite[Theorem 4.8]{BMRS}.
The basic reason is that we employ a bijective correspondence between periodic trajectories
for profinitely isomorphic pseudo-Anosov mapping tori,
as established in \cite{Liu_profinite_almost_rigidity}.

As one should be warned,
Theorem \ref{main_rep_EZ} does not tell whether
the discrete faithful representations (or their Galois conjugates) 
are paired up by the asserted correspondence.
Nor does \cite[Theorem 4.8]{BMRS}.
In fact, the desired injectivity in \cite{BMRS} is eventually obtained 
using the Hopfian property of topologically finitely generated profinite groups,
and moreover, the argument thereof
appeals to concrete features of particular lattices, 
such as lists of their small index subgroups.
In the proof of Theorem \ref{main_vol_EZ}, 
we are only able to infer from Conjecture \ref{Borel_hypothesis_p_EZ}
that discrete faithfulness should be preserved under the correspondence,
and this is a consequence of the volume rigidity 
for $\mathrm{SL}(2,\Complex)$--representations of 
uniform $\mathrm{PSL}(2,\Complex)$--lattices
(see Example \ref{hyp_manifold_rep_vol}).

\subsection*{Ingredients}
In the rest of the introduction,
we explain our strategy for proving Theorems \ref{main_vol_EZ} and \ref{main_rep_EZ}.
We also point out some new ideas of the present paper. 

Let $\pi_A$ and $\pi_B$ be a pair of finitely generated, residually finite groups.
Suppose that $\Psi\colon\widehat{\pi}_A\to\widehat{\pi}_B$ is an isomorphism 
between their profinite completions.
In order to set up a correspondence of representations 
$\pi_B\to\mathrm{PSL}(2,\Rational^\algcl)$ and $\pi_A\to\mathrm{PSL}(2,\Rational^\algcl)$
up to conjugacy, one would naturally look for 
certain kind of pullback operation with respect to $\Psi$.
Assuming Zariski denseness of the representations will allow us to speak of 
the invariant quaternion algebras (of the images), 
and will also make it convenient for passing to finite-index subgroups.

For simplicity, let us consider 
a Zariski dense $\mathrm{PSL}(2,\Rational^\algcl)$--representation of $\pi_B$ 
that comes from a representation $\rho_B\colon \pi_B\to\mathrm{SL}(2,K)$,
where $K$ is subfield finite over $\Rational$.
For all but finitely many non-Archimedean places $v$ of $K$,
the induced representation $\rho_{B,v}\colon \pi_B\to\mathrm{SL}(2,K_v)$
is bounded.
For any such $v$, 
we obtain a representation $\rho_{A,v}^{\Psi}\colon\pi_A\to\mathrm{SL}(2,K_v)$,
as the $\Psi$--pullback of the continuous extension 
$\rho_{B,v}\colon\widehat{\pi}_B\to\mathrm{SL}(2,K_v)$
to $\widehat{\pi}_A$ and restricted to $\pi_A$.
The same construction has already appeared in the proof of \cite[Theorem 4.8]{BMRS}.
However, we do not automatically know
whether the characters of $\rho_{A,v}^{\Psi}$ 
are valued in $\Rational^\algcl$, 
or whether the characters are the same for various $v$. 
Certainly one would hope they are,
and then, 
there will be some representation $\rho_A\colon \pi_A\to\mathrm{SL}(2,\Rational^\algcl)$
with the same character.
If so,
$\rho_A$ should be the right candidate for corresponding to $\rho_B$,
but how could we possibly read off the invariant quaternion algebra of $\rho_A$
from the construction,
and especially,
its ramification behavior over the Archimedean places
of the invariant trace field?

We are able to address the above issues in the case of closed hyperbolic $3$--manifold groups
$\pi_{A/B}=\pi_1(M_{A/B})$. (The notation $A/B$ means $A$ or $B$ respectively.)
We may assume $M_{A/B}$ fibering over a circle 
as a surface bundle with pseudo-Anosov monodromy,
because of the virtual fibering theorem \cite{Agol_VHC},
(see also \cite{AFW_book_group} for a survey 
on virtual properties of $3$--manifold groups).
It is proved in \cite{Liu_profinite_almost_rigidity} that $\Psi$
witnesses a bijective correspondence of the surface fiber subgroups;
and moreover,
fixing a corresponding pair of surface fibers $S_A$ and $S_B$,
$\Psi$ witnesses a bijective correspondence between the periodic trajectories
of pseudo-Anosov suspension flows on $M_A$ and on $M_B$.
More precisely,
there exists an invertible profinite integer 
$\mu\in\widehat{\Integral}^\times$,
which depends only on $(\pi_A,\pi_B,\Psi)$.
For any corresponding pair of periodic trajectories $\mathbf{c}_A$ and $\mathbf{c}_B$
(as conjugacy classes in $\pi_A$ and $\pi_B$),
the $\mu$--power of $\mathbf{c}_A$ projects $\mathbf{c}_B$ under $\Psi$
(as conjugacy classes in $\widehat{\pi}_A$ and $\widehat{\pi}_B$).
It is this trajectory correspondence that 
allows us to compare the characters of $\rho_A$ and $\rho_B$
on \emph{some} pairs of conjugacy classes in $\pi_A$ and $\pi_B$.
We use this partial information to infer about the characters of $\rho_{A,v}$,
and about the invariant quaternion algebra of $\rho_A$.

In order to compute the characters of $\rho_{A,v}^\Psi$
on the periodic trajectories of $M_A$ from data with $\rho_B$ and $M_B$,
we improve a main result regarding $\widehat{\Integral}^\times$--regularity 
in \cite{Liu_profinite_almost_rigidity}.
We show that $\mu$ has multiplicative order $2$ (Lemma \ref{p_regularity}).
Writing $\mu=(\mu_p)_p$ with respect to $\widehat{\Integral}^\times\cong\prod_p\Integral_p^\times$,
this means $\mu_p\in\{\pm1\}$ for each rational prime $p$.
The topological ingredient for the improvement comes from
the resolution of McMullen's conjecture on virtual homological spectral radii
of pseudo-Anosov automorphisms \cite{Liu_vhsr}.
That conjecture asserts that every pseudo-Anosov automorphism $f\colon S\to S$
admits a lift $f'\colon S'\to S'$ to some finite cover $S'\to S$,
such that $f'_*\colon H_1(S';\Complex)\to H_1(S';\Complex)$ has a complex eigenvalue
outside the unit circle.
As a consequence, we can identify certain twisted Alexander polynomials of $M_A$ and of $M_B$ 
(associated to some $\Psi$--corresponding finite quotients
$\pi_A\to\Gamma$ and $\pi_B\to \Gamma$),
such that the polynomials are monic and reciprocal over $\Integral$,
but are not product of cyclotomic factors.
With these polynomials,
we show that $\mu_p$ is rational for each $p$, 
using Mahler's $p$--adic analogue of the Gelfond--Schneider theorem \cite{Mahler_GS}.
Then we deduce $\mu_p\in\{\pm1\}$ by simple observation.
(See Sections \ref{Sec-profin_power_operation} and \ref{Sec-mu_square}.)

In order to determine the invariant quaternion algebra of $\rho_A$,
we introduce two new criteria,
in terms of the character $\chi_A$ of $\rho_A$
evaluated at the periodic trajectories of $M_A$.
The first criterion says that the trace field of $\rho_A$ 
is generated over $\Rational$ 
by the values of $\chi_A$ on the periodic trajectories (Lemma \ref{trace_field_pt}).
The second criterion says that 
the quaternion algebra of $\rho_A$ at a real place of the trace field 
is ramified or unramified,
according as the real values of $\chi_A$ on the periodic trajectories
are uniformly bounded or unbounded (Lemma \ref{quaternion_algebra_pt}).
(See Section \ref{Sec-traj_arithm}.)
We can identify the invariant trace fields of $\rho_A$ and $\rho_B$
with the first criterion (applied restricting to finite-index subgroups),
and identify the Archimedean places of ramifications with the second criterion,
and identify the non-Archimedean places of ramification 
easily from the construction. 
Since quaternion algebras over number fields are isomorphically classified
by their places of ramification,
we can identify the invariant trace field of $\rho_A$ 
isomorphically with that of $\rho_B$.
(See Section \ref{Sec-profin_corresp_rep}.)

The above discussion covers all the essential points 
for proving Theorem \ref{main_rep_EZ}.
As to Theorem \ref{main_vol_EZ}, 
essentially we need to show 
that the induced homomorphisms
$\rho_{A*}\colon H_3(\pi_{A};\Integral)\to H_3(\mathrm{SL}(K);\Integral)\otimes_\Integral\Rational$
and
$\rho_{B*}\colon H_3(\pi_{B};\Integral)\to H_3(\mathrm{SL}(K);\Integral)\otimes_\Integral\Rational$
map the fundamental classes $[M_A]$ and $[M_B]$
to the same group homology class up to sign.
(There are natural isomorphisms
$K_3(K)\otimes_\Integral\Rational\cong
H_3(\mathrm{SL}(K);\Integral)\otimes_\Integral\Rational\cong
H_3(\mathrm{SL}(N,K);\Integral)\otimes_\Integral\Rational$ for sufficiently large $N$.)
Then Borel's theorem will tell us that $M_A$ and $M_B$ have identical hyperbolic volume,
and the rest assertions of Theorem \ref{main_vol_EZ} will become easy consequences 
of Theorem \ref{main_rep_EZ}.
With the profinite isomorphism $\Psi$,
it is only obvious to identify $[M_A]$ with $\pm[M_B]$
in the continuous group homology (also known as the profinite group homology)
$H_3^{\mathtt{cont}}(\widehat{\pi}_A;\Integral_p)\cong H_3^{\mathtt{cont}}(\widehat{\pi}_B;\Integral_p)$.
This forces us, in the present paper, to work with
suitable versions of structured topological group homology or cohomology,
(for example,
the locally analytic group cohomology of $\mathrm{GL}(N,O_{\mathfrak{P}})$
where $O_{\mathfrak{P}}$ is the completion of the ring of integers in $K$
at some prime $\mathfrak{P}$).
We have to transfer information back to the abstract group homology at some point.
It turns out that Conjecture \ref{Borel_hypothesis_p_EZ} (or Hypothesis \ref{Borel_hypothesis_p})
is exactly the missing piece for completing the whole argument.
(See Section \ref{Sec-profin_invar_vol}.)

\subsection*{Organization}
The preliminary of this paper is divided into two sections.
The first part (Section \ref{Sec-prelim_I}) contains a quick review of Conjecture \ref{Borel_hypothesis_p_EZ}.
We also include necessary background regarding volume of $\mathrm{SL}(2,\Complex)$--representations,
but the materials in Section \ref{Sec-prelim_I} are not needed until Section \ref{Sec-profin_invar_vol}.
The second part (Section \ref{Sec-prelim_II}) is directly related to the next few sections.
We review general facts about quaternion algebras and pseudo-Anosov automorphism dynamics.

In Section \ref{Sec-traj_arithm}, we prove the two criteria regarding periodic trajectories in a pseudo-Anosov mapping torus
and arithmetic invariants associated to an irreducible $\mathrm{SL}(2,\Complex)$--representation.
In Section \ref{Sec-profin_corresp_traj},
we recall the profinite correspondence of periodic trajectories,
and extract a precise statement from results in \cite{Liu_profinite_almost_rigidity}.
In Section \ref{Sec-profin_power_operation},
we provide some explicit description about the profininte power operation 
on profinite Laurent polynomial rings and profinite matrix groups.
In Section \ref{Sec-mu_square}, 
we prove our improvement $\mu^2=1$ based on algebraic lemmas in Section \ref{Sec-profin_power_operation}.

In Section \ref{Sec-rep_qa},
we develop some {\it ad hoc} theory regarding group representations in quaternion algebras,
in order to facilitate subsequent disposal.
In Section \ref{Sec-profin_corresp_rep},
we prove an essential case of Theorem \ref{main_rep_EZ},
that is, the case with pseudo-Anosov mapping tori.
In Section \ref{Sec-profin_invar_vol},
we prove the same essential case of Theorem \ref{main_vol_EZ}.
In Section \ref{Sec-main_theorems},
we deduce the general cases of 
Theorem \ref{main_rep_EZ} (formally stated as Theorem \ref{main_profinite_correspondence_PSL})
and Theorem \ref{main_vol_EZ} (formally stated as Theorem \ref{main_profinite_invariance_vol})
from the above essential cases.
and we actually restate the theorems formally,
respectively.

\subsection*{Acknowledgement} 
The author thanks Ian Agol, Frank Calegari, and Shengkui Ye for valuable communications.
The author also thanks the anonymous referee for careful proofreading 
and pointing out missing arguments in a preliminary version.

\section{Preliminary I}\label{Sec-prelim_I}
In this preliminary section, 
we review volume of representations in $\mathrm{SL}(2,\Complex)$
and its relation with the classical Borel regulator. 
We review the $p$--adic Borel regulator injectivity conjecture (Conjecture \ref{Borel_hypothesis_p_EZ})
in a special case as Hypothesis \ref{Borel_hypothesis_p}.
For convenience of our application,
we reformulate the materials
on the level of suitable versions of group homology and cohomology,
and include the algebraic K-theoretic versions in 
Remarks \ref{Borel_theorem_infinity_remark} and \ref{Borel_hypothesis_p_remark}.
We could have placed this section right before Section \ref{Sec-profin_invar_vol}
without affecting the logical order,
but we decide to place it here as background of Conjecture \ref{Borel_hypothesis_p_EZ}.

\subsection{Volume of representations in $\mathrm{SL}(2,\Complex)$}
	Let $M$ be an oriented connected closed (smooth) $3$--manifold.
	Fixing a universal covering space $\widetilde{M}_\univ$ of $M$, 
	we denote by $\pi=\pi_1(M)$ the universal deck transformation group,
	which we also frequently refer to as the fundamental group of $M$.
	For any group homomorphism $\rho\colon \pi\to \mathrm{SL}(2,\Complex)$,
	there is a well-defined value
	$\mathrm{Vol}_{\mathrm{SL}(2,\Complex)}\left(M,\rho\right)\in\Real$,
	called the \emph{volume} of $M$ with respect to 
	the $\mathrm{SL}(2,\Complex)$--representation $\rho$.
	We recall a description of $\mathrm{Vol}_{\mathrm{SL}(2,\Complex)}(M,\rho)$
	as follows.
	
	Denote by
	$\Hyp^3=\{(x+\mathbf{i}y,h)\in\Complex\times\Real\colon h>0\}$
	the upper half space model of $3$--dimensional hyperbolic geometry.
	The model is furnished with the Riemannian metric $(\ud x^2+\ud y^2 +\ud h^2)^{1/2}/h$,
	which has constant sectional curvature $-1$.
	The orientation-preserving isometry group $\mathrm{Isom}_+(\Hyp^3)$
	is identified with $\mathrm{PSL}(2,\Complex)$,
	agreeing with the natural identification 
	of the sphere at infinity $\partial_{\infty}\Hyp^3$ 
	with the complex projective line
	$\Complex\mathbf{P}^1=\Complex\cup\{\infty\}$.
	If $D_\rho\colon\widetilde{M}_\univ\to \Hyp^3$
	is a $(\pi,\mathrm{PSL}(2,\Complex))$--equivariant
	(smooth) map
	with respect to the projectivized representation 
	$\mathbf{P}(\rho)\colon\pi\to \mathrm{PSL}(2,\Complex)$, 
	the volume of $(M,\rho)$ can be expressed with an integral
	\begin{equation}\label{volume_D_rho}
	\mathrm{Vol}_{\mathrm{SL}(2,\Complex)}\left(M,\rho\right)=
	\int_{\mathcal{F}} D^*_\rho\,\ud\mathrm{Vol}_{\Hyp^3}.
	\end{equation}
	Here, $\mathcal{F}\subset\widetilde{M}_\univ$ is a fundamental domain of $\pi$,
	and where $\ud\mathrm{Vol}_{\Hyp^3}=h^{-3}\ud x\wedge\ud y\wedge \ud h$ 
	denotes the volume form on $\Hyp^3$.
	
	Any map $D_\rho$ as above is called a \emph{developing map} for $\rho$.
	Since $\Hyp^3$ is contractible, developing maps always exist,
	and the resulting volume value 
	does not depend on particular choices of $\mathcal{F}$ or $D_\rho$.
	Moreover, the volume of $(M,\rho)$ is invariant under continuous deformations
	of $\rho$. In particular, it is $\mathrm{SL}(2,\Complex)$--conjugation invariant in $\rho$.
	It takes only finitely many values as $\rho$ ranges over all the representations
	$\pi\to\mathrm{SL}(2,\Complex)$.
	We refer the reader to \cite{DLSW_rep_vol} 
	for an exposition of volume of representations in a general setting,
	and for further references to many important results in the literature.
	
	\begin{example}\label{hyp_manifold_rep_vol}
	Let $M$ be an oriented closed hyperbolic $3$--manifold.
	For any representation $\rho\colon\pi_1(M)\to \mathrm{SL}(2,\Complex)$,
	the following inequality holds
	$$\left|\mathrm{Vol}_{\mathrm{SL}(2,\Complex)}(M,\rho)\right|\leq \mathrm{Vol}_{\Hyp^3}(M).$$
	Moreover, the equality is achieved if and only if
	$\rho$ is a discrete faithful representation.
	See	\cite[Theorem 1.1]{Kim--Kim_volume_maximal} or \cite[Theorem 1.4]{BCG_Milnor_Wood}.
	\end{example}
	
	\begin{remark}\label{PSL_rep_vol_remark}	
	It is also possible to define the volume of any $\mathrm{PSL}(2,\Complex)$--representation
	of $\pi_1(M)$.
	For any representation $\rho\colon \pi_1(M)\to \mathrm{SL}(2,\Complex)$,
	the volume $\mathrm{Vol}_{\mathrm{PSL}(2,\Complex)}(M,\mathbf{P}(\rho))$
	of the projectivized representation
	$\mathbf{P}(\rho)\colon \pi_1(M)\to \mathrm{PSL}(2,\Complex)$
	is equal to $\mathrm{Vol}_{\mathrm{SL}(2,\Complex)}(M,\rho)$,
	by definition (see \cite[Section 2]{DLSW_rep_vol}).
	When $M$ is hyperbolic,	$\rho$ is discrete and faithful	if and only if
	$\mathbf{P}(\rho)$ is discrete and faithful.
	In this case, $\mathbf{P}(\rho)$ is unique up 
	to a real Lie group automorphism of $\mathrm{PSL}(2,\Complex)$.
	It follows that $\rho$ is unique up to an inner automorphism,
	and a complex conjugation, and a scalar representation $\pi_1(M)\to\{\pm1\}$.
	It is also known that any discrete faithful representation 
	of $\pi_1(M)$ in $\mathrm{PSL}(2,\Complex)$
	lifts to a representation in $\mathrm{SL}(2,\Complex)$ \cite{Culler_lift}.
	In general, a representation $\rho^\flat\colon\pi_1(M)\to\mathrm{PSL}(2,\Complex)$
	is not necessarily liftable to $\mathrm{SL}(2,\Complex)$.
	The obstruction can be characterized as some second Stiefel--Whitney class
	$w_2(\rho^{\flat})\in H^2(\pi_1(M);\Integral/2\Integral)$;
	in the liftable case, the $\mathrm{SL}(2,\Complex)$--representation
	lifts of $\rho^\flat$ differ from each other
	by a scalar representation factor $\varepsilon\colon \pi_1(M)\to \{\pm1\}$,
	so they form an affine $H^1(\pi_1(M);\Integral/2\Integral)$.
	(See \cite[Remark 4.1]{Heusener--Porti_PSL}; compare \cite[Proposition 5.1]{DLSW_rep_vol}.)
	\end{remark}
	
	
	\subsection{Borel regulators: from classical to $p$--adic}
	Volume of $\mathrm{SL}(2,\Complex)$--representations can be reinterpreted
	in terms of the (second) complex Borel regulator.
	We briefly recall some relevant facts, 
	and explain a precise formula (\ref{volume_Borel_complex}).
	We recall analogous Borel regulators in the $p$--adic setting,
	following Huber and Kings \cite{Huber--Kings_regulator}.	
				
	For every dimensions $n$,
	the $n$--th differentiable group cohomology 
	$H^n_{\mathtt{diff}}(\mathrm{GL}(N,\Complex);\Complex)$
	of the complex Lie group $\mathrm{GL}(N,\Complex)$
	stabilizes for all sufficiently large $N$,
	with respect to the natural inclusions
	$\mathrm{GL}(1,\Complex)\to \mathrm{GL}(2,\Complex)\to \cdots
	\to \mathrm{GL}(N,\Complex)\to \cdots$,
	(inserting extra diagonal entries $1$ at the lower right corner
	of square matrices).
	In fact,
	via the van Est isomorphism \cite[Chapter X, Corollary 5.6]{Borel--Wallach_book},
	$H^n_{\mathtt{diff}}(\mathrm{GL}(N,\Complex);\Complex)$
	identifies
	with the complex Lie algebra cohomology
	$H^n(\mathfrak{gl}(N,\Complex),\mathrm{U}(N);\Complex)
	\cong H^n(\mathfrak{gl}(N,\Complex);\Complex)$ for all $N$.
	The cohomology ring 
	$H^*(\mathfrak{gl}(N,\Complex);\Complex)$ is
	an exterior $\Complex$--algebra on 
	canonical primitive elements 
	$p_{\Complex,m}\in H^{2m-1}(\mathfrak{gl}(N,\Complex);\Complex)$,
	where $m$ ranges in $\{1,2,\cdots,N\}$,
	(see Remark \ref{normalization_remark} below 
	about the normalization convention).
	
	\begin{remark}\label{normalization_remark}
		The canonical primitive elements $p_{\Complex,m}$ 
		may be chosen differently by different authors.
		Although not quite important for our discussion,
		we fix our normalization in this paper
		following Huber and Kings \cite[Definition 0.4.5]{Huber--Kings_regulator}.
		For any field $F$ of characteristic $0$
		and any index $m\geq1$,
		we fix the canonical class	$p_{F,m}\in H^{2m-1}(\mathfrak{gl}(N,F);F)$
		as represented by the alternating $F$--multilinear form
		$$p_{F,m}(X_1,X_2\cdots, X_{2m-1})=\frac{((m-1)!)^2}{(2m-1)!}\cdot
		\sum_{\sigma}\mathrm{sgn}(\sigma)\cdot\mathrm{tr}\left(X_{\sigma(1)}\cdot X_{\sigma(2)}\cdot\cdots\cdot X_{\sigma(2m-1)}\right),$$
		for all $X_1,X_2,\cdots,X_{2m-1}$ in $\mathfrak{gl}(N,F)=\mathrm{Mat}(N,F)$,
		where $\sigma$ ranges over all permutations on the indices $\{1,2,\cdots,2m-1\}$.
		Note that these forms all come from canonical forms over $\Rational$ on by scalar extension.
	\end{remark}
			
	In this paper, we declare
	the $m$--th \emph{complex Borel regulator} 
	as the canonical differentiable group cohomology class
	$p_{F,m}\in H^{2m-1}_{\mathtt{diff}}(\mathrm{GL}(N,\Complex);\Complex)$,
	for any $N$ in the stable range.
	One may equavalently declare 
	the $m$--th complex Borel regulator
	as a continuous group cohomology class,
	because the natural homomorphism
	$H^*_{\mathtt{diff}}(\mathrm{GL}(N,\Complex);\Complex)\to
	H^*_{\mathtt{cont}}(\mathrm{GL}(N,\Complex);\Complex)$
	is isomorphic \cite[Chapter IX, Section 5]{Borel--Wallach_book}.
		
	\begin{example}\label{complex_Borel_regulator_example}\
	\begin{enumerate}
	\item
	The first complex Borel regulator can be characterized as 
	an abelian group homomorphism $H_1(\mathrm{GL}(N,\Complex);\Integral)\to\Complex$.
	One may identify the (abstract) group homology
	$H_1(\mathrm{GL}(N,\Complex);\Integral)$
	with the abelian group
	$\mathrm{GL}(N,\Complex)/[\mathrm{GL}(N,\Complex),\mathrm{GL}(N,\Complex)]
	\cong \mathrm{GL}(N,\Complex)/\mathrm{SL}(N,\Complex)
	\cong	\Complex^\times$.
	By simple computation, the first complex Borel regulator 
	is explicitly twice the logarithmic modulus function
	on $\Complex^\times$, namely, $z\mapsto 2\cdot\log|z|$.	
	\item
	The second complex Borel regulator
	lives in
	$H^3_{\mathtt{diff}}(\mathrm{GL}(N,\Complex);\Complex)
	\cong H^3_{\mathtt{diff}}(\mathrm{SL}(2,\Complex);\Complex)$.
	One may identify
	$H^3_{\mathtt{diff}}(\mathrm{SL}(2,\Complex);\Complex)$
	with $H^3(\mathfrak{sl}(2,\Complex),\mathrm{SU}(2);\Complex)$,
	or equivalently, with the cohomology of 
	$\mathrm{Isom}_+(\Hyp^3)$--invariant complex valued differential forms on $\Hyp^3$
	\cite[Chapter I, Sections 1.6 and 5.1]{Borel--Wallach_book}.
	Then,
	the second complex Borel regulator 
	is explicitly represented by twice the volume form,
	namely, $2h^{-3} \ud x\wedge \ud y \wedge \ud h$.
	(The coeffiicent $2$ can be easily checked 
	at $(x,y,h)=(0,0,1)$ in $\Hyp^3$, according to Remark \ref{normalization_remark}).
	\end{enumerate}
	\end{example}
		
	In this paper, we are the most interested in the second complex Borel regulator.
	We rewrite the notation $p_{\Complex,2}$ particularly as
	\begin{equation}\label{b_complex}
	b_\Complex\in H^3_{\mathtt{diff}}(\mathrm{GL}(N,\Complex);\Complex),
	\end{equation}
	with any $N$ in the stable range,
	(such as $N\geq2$; see Example \ref{complex_Borel_regulator_example}).
	We simply refer to $b_\Complex$ as \emph{the} complex Borel regulator.
	
	With the above description,
	we obtain a formula for volume of $\mathrm{SL}(2,\Complex)$--representations
	as follows.
	For any oriented connected closed $3$--manifold $M$
	and any representation $\rho\colon\pi_1(M)\to \mathrm{SL}(2,\Complex)$,
	\begin{equation}\label{volume_Borel_complex}
	2\cdot \mathrm{Vol}_{\mathrm{SL}(2,\Complex)}(M,\rho)=
	\left\langle \rho^*_{\stable}(b_{\Complex}),\,[M]\right\rangle.
	\end{equation}
	Here, $\langle \_,\_\rangle$
	denotes the natural pairing 
	$H^3(M;\Complex)\times H_3(M;\Integral)\to \Complex$,
	and $\rho^*_{\mathtt{st}}$ denotes the homomorphism
	$H^3_{\mathtt{diff}}(\mathrm{GL}(N,\Complex);\Complex)
	\to H^3(\pi_1(M);\Complex)\to H^3(M;\Complex)$
	induced by the composite $\mathrm{GL}(N,\Complex)$--representation 
	$\rho_{\mathtt{st}}\colon\pi_1(M)\to\mathrm{SL}(2,\Complex)\to
	\mathrm{GL}(2,\Complex)\to \mathrm{GL}(N,\Complex)$.
		
	Let $p$ be a prime in $\Rational$.
	Let $L$ be a field extension over the $p$--adic field $\Rational_p$
	of finite algebraic degree,
	with the valuation ring denoted as $R$.
	Just as complex Borel regulators arise 
	in $H^*(\mathfrak{gl}(N,\Complex);\Complex)$,
	$L$--Borel regulators arise in the cohomology ring 
	$H^*(\mathfrak{gl}(N,L);L)$
	of the Lie algebra $\mathfrak{gl}(N,L)$ over $L$.
	The Lazard isomorphism plays the role 
	instead of the van Est isomorphism,
	identifying $H^n(\mathfrak{gl}(N,L);L)$
	with the locally analytic group cohomology	
	$H^n_{\mathtt{la}}(\mathrm{GL}(N,R);L)$
	of the compact $L$--Lie group $\mathrm{GL}(N,R)$.
	The cohomology ring 
	$H^*(\mathfrak{gl}(N,L);L)$
	of the Lie algebra $\mathfrak{gl}(N,L)$ over $L$
	is an exterior $L$--algebra 
	on primitive elements of odd dimensions up to $2N-1$,
	just as in the complex case.
	(See \cite[Section 1]{Huber--Kings_regulator}.)
	
	We declare the $m$--th \emph{$L$--Borel regulator}
	as the canonical locally analytic group cohomology class
	in $p_{L,m}\in H^{2m-1}_{\mathtt{la}}(\mathrm{GL}(N,R);L)$,
	for any $N$ in the stable range.	
	By relaxing the regularity condition on the space of functions,	
	one obtains natural homomorphisms of $L$--modules
	$$H^{2m-1}_{\mathtt{la}}(\mathrm{GL}(N,R);L)
	\to H^{2m-1}_{\mathtt{cont}}(\mathrm{GL}(N,R);L)
	\to H^{2m-1}(\mathrm{GL}(N,R);L).$$
	Therefore, it makes sense to declare
	$L$--Borel regulators less restrictively
	as continuous or abstract group cohomology classes,
	by natural transformations of cohomology theories.
	
	In this paper, we only need
	in the second $L$--Borel regulator, 
	so we simply call it \emph{the} $L$--Borel regulator.
	Therefore, we denote the (second) $L$--Borel regulator as 
	\begin{equation}\label{b_L}
	b_L\in H^3_{\mathtt{la}}(\mathrm{GL}(N,R);L),
	\end{equation}
	for any $N$ in the stable range.
	(For example, one may take any $N\geq3$,
	because of the canonical isomorphism
	$H^3_{\mathtt{la}}(\mathrm{GL}(N,R);L)\cong H^3_{\mathtt{la}}(\mathrm{GL}(3,R);L)$,
	by the Lazard isomorphism; see \cite[Theorem 1.2.1]{Huber--Kings_regulator}).

	\begin{remark}\label{b_real_remark}
	As $\Rational_\infty=\Real$ 
	has exactly two finite algebraic field extensions $\Real$ and $\Complex$,
	we briefly describe real Borel regulators for completeness of 
	the classical versus $p$--adic analogy.
	For every dimension $n$,
	the $n$--th differentiable group cohomology 
	$H^n_{\mathtt{diff}}(\mathrm{GL}(N,\Real);\Real)$
	of the real Lie group $\mathrm{GL}(N,\Real)$
	stabilizes for all sufficiently large $N$,
	and there are van Est isomorphisms
	$H^n_{\mathtt{diff}}(\mathrm{GL}(N,\Real);\Real)\cong
	H^n(\mathfrak{gl}(N,\Real),\mathrm{O}(N);\Real)$
	\cite[Chapter X, Section 5]{Borel--Wallach_book}.
	The stable cohomology is an exterior $\Real$--algebra
	on primitive elements $q_{\Real,m}$ of each degree $4m-3$,
	(see \cite[Section 10.6]{Borel_stable}).
	Upon suitable normalization, one may declare 
	$q_{\Real,m}\in H^{4m-3}_{\mathtt{diff}}(\mathrm{GL}(N,\Real);\Real)$
	as the $m$--th real Borel generator, with $N$ in the stable range.
	\end{remark}

	\subsection{Borel regulator maps associated to number fields}
	Suppose that $k/\Rational$ is finite algebraic field extension.
	As $\Rational$ has a unique Archimedean completion $\Rational_\infty=\Real$,
	there is a canonical ring isomorphism
	$k_\infty=k\otimes_\Rational\Rational_\infty\cong\prod_{v|\infty} k_v$.
	The ring $k_\infty$ is isomorphic to $\prod^{r_1}\Real\times\prod^{r_2}\Complex$,
	where $r_1+2r_2=[k:\Rational]$.
	We introduce an element
	\begin{equation}\label{Borel_infinity_k}
	b_\infty\in \bigoplus_{v|\infty} H^3_{\mathtt{diff}}(\mathrm{GL}(N,k_v);k_v)
	\end{equation}
	as the sum of the Borel regulators $b_v=b_\Complex$ 
	in the direct summands with $k_v\cong\Complex$,
	for $N$ in the stable range.
	Note that $H^3_{\mathtt{diff}}(\mathrm{GL}(N,k_v);k_v)$ vanishes 
	when $k_v\cong\Real$	(Remark \ref{b_real_remark}).
	The completion embeddings $k\to k_v$
	induce subgroup embeddings $\mathrm{SL}(2,k)\to \mathrm{SL}(2,k_v)$.
	Summing up the $k_v$--valued natural pairings with the pull-backs of $b_v=b_\Complex$
	associated to the complex places $v$,
	we obtain a real linear homomorphism, also denoted as
	\begin{equation}\label{Borel_infinity_k_map}
	b_\infty\colon H_3\left(\mathrm{SL}(N,k);\Integral\right)\otimes_\Integral\Real\to k_\infty,
	\end{equation}
	with $N$ sufficiently large.	
	In this paper,
	we refer to $b_\infty$ in (\ref{Borel_infinity_k_map})
	as the \emph{classical Borel regulator map}
	associated to the number field $k$.
	In (\ref{Borel_infinity_k_map}), one may actually take any $N\geq7$,
	and the following theorem is due to A.~Borel \cite{Borel_stable},
	(see Remark \ref{Borel_theorem_infinity_remark}).
	
	\begin{theorem}\label{Borel_theorem_infinity}	
	Let $k$ be a finite algebraic field extension over $\Rational$.
	Then, the classical Borel regulator map $b_\infty$,
	as defined in (\ref{Borel_infinity_k_map}) is injective.
	Moreover, $b_\infty$ has image of real dimension $r_2$,
	where $r_2$ denotes the number of complex places of $k$.
	\end{theorem}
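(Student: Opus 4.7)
The plan is to identify the map $b_\infty$ of (\ref{Borel_infinity_k_map}) with Borel's classical regulator on algebraic K-theory, and then invoke Borel's rank computation from \cite{Borel_stable}. For $N$ in the stable range (for instance $N\geq 7$), Suslin's homological stability for $\mathrm{SL}$ over a field gives an isomorphism $H_3(\mathrm{SL}(N,k);\Integral)\cong H_3(\mathrm{SL}(k);\Integral)$. After tensoring with $\Rational$, the Milnor--Moore theorem applied to the Quillen plus-construction identifies the primitive part of $H_*(\mathrm{SL}(k);\Rational)$ with $K_*(k)\otimes_\Integral\Rational$; in degree $3$ the entire rational homology group is primitive, yielding $H_3(\mathrm{SL}(k);\Integral)\otimes_\Integral\Rational\cong K_3(k)\otimes_\Integral\Rational$.

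Under this identification, I would check that $b_\infty$ agrees with the classical Borel regulator $K_3(k)\otimes_\Integral\Real\to\Complex^{r_2}$ coming from the complex places. This is a cocycle-level comparison: the van Est representative of $p_{\Complex,2}$ that defines $b_\Complex$ in (\ref{b_complex}) is represented, via Example \ref{complex_Borel_regulator_example}(2), by twice the hyperbolic volume form on $\Hyp^3=\mathrm{SL}(2,\Complex)/\mathrm{SU}(2)$; this is precisely the $(\mathrm{SL}(2,\Complex),\mathrm{SU}(2))$-relative Lie algebra cocycle used by Borel to define his regulator on the symmetric space at each complex place. Summing the contributions from the $r_2$ complex places of $k$ and using the fact that $H^3_{\mathtt{diff}}(\mathrm{GL}(N,\Real);\Real)=0$ (Remark \ref{b_real_remark}) to ignore real places in degree $3$ then matches $b_\infty$ with Borel's construction on the nose, up to the fixed normalization of Remark \ref{normalization_remark}.

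Once this identification is in place, the theorem reduces to Borel's rank and injectivity theorem: $K_3(k)\otimes_\Integral\Real$ has real dimension exactly $r_2$, and the Borel regulator map embeds it as a cocompact lattice in $\Real^{r_2}\subset\Complex^{r_2}$. Matching the dimension on both sides gives injectivity for free.

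The main obstacle is the bookkeeping of normalizations and conventions in bridging the group-cohomological formulation of $b_\infty$ with the K-theoretic formulation of Borel's theorem: one must verify that the pairing with an $H_3$-class coming from a geometric fundamental class (as will be used in Section \ref{Sec-profin_invar_vol}) coincides with Borel's regulator evaluated at the Hurewicz image of a K-theory class, in both absolute value and sign. These verifications are classical but delicate, and the normalization adopted here follows Huber--Kings \cite{Huber--Kings_regulator} precisely so that they go through cleanly.
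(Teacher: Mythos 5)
Your proposal is correct and follows essentially the same route as the paper: the theorem is explicitly attributed to Borel, with the justification sketched in Remark \ref{Borel_theorem_infinity_remark}, which — like your argument — reduces the statement to Borel's rank and injectivity theorem for $K_3$ via the same chain of stable-homology and $K$-theoretic identifications. The only cosmetic differences are in the references invoked for the intermediate steps (the paper cites van der Kallen for homological stability and Sah for the passage from stable $H_3$ to $K_3\otimes_\Integral\Rational$, whereas you invoke Suslin and Milnor--Moore directly; for a field these chains are equivalent since $\mathrm{SL}(k)=\mathrm{EL}(k)$).
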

	
	\begin{remark}\label{Borel_theorem_infinity_remark}
	For any (unital, associative) algebra $R$ and for $n\geq1$, 
	recall that the $n$-th K-group of $R$	is defined as 
	$K_n(R)=\pi_n(\mathrm{BGL}(R)^+)$,
	via Quillen's plus construction \cite[Chapter 5, Definition 5.2.6]{Rosenberg_book}.
	If $k/\Rational$ is a finite algebraic field extension with $r_1$ real places
	and $r_2$ complex places,
	Borel shows that $K_n(O)\otimes_\Integral\Real$ has real dimension 
	$0,r_1+r_2,0,r_2$ for $n\equiv 0,1,2,3$ modulo $4$ and $n\geq2$,
	where $O$ denotes the ring of integers in $K$ \cite[Proposition 12.2]{Borel_stable}.
	More precisely, Borel considers the composite homomorphism
	\begin{equation*}\label{Borel_K_map}
	\xymatrix{
	K_n(O) \ar[r] & 
	\bigoplus_{v|\infty} K_n(O_v) \ar[r]^-{h_*} &
	\bigoplus_{v|\infty} H_n(\mathrm{BGL}(k_v)^+;\Integral) \ar[r]^-{\cong} &
	\bigoplus_{v|\infty} H_n(\mathrm{BGL}(k_v);\Integral),
	}
	\end{equation*}
	where $h_*=\bigoplus_{v|\infty}h_{v*}$ denotes the Hurewicz homomorphism.
	By pairing with the complex or real Borel regulators of the correct dimension,
	one obtains canonical abelian group homomorphisms of $K_n(O)$ to 
	$\{0\},\Real^{r_1}\oplus\Complex^{r_2},\{0\},\Complex^{r_2}$,
	for $n\equiv 0,1,2,3$ modulo $4$, respectively.
	Borel shows that these homomorphisms induce isomorphisms of 
	$K_n(O)\otimes_\Integral\Real$
	onto $\{0\},\Real^{r_1}\oplus\Real^{r_2},\{0\},\Real^{r_2}$, respectively.
	The ring inclusion $O\to k$ induces an isomorphism
	$K_n(O)\otimes_\Integral\Real\cong K_n(k)\otimes_\Integral\Real$,
	by the localization theorem 
	\cite[Chapter 5, Corollary 5.3.28; see also Theorem 5.3.2]{Rosenberg_book}).
	When $n=3$ and $N\geq7$,
	there are natural isomorphisms
	$K_3(k)\otimes_\Integral\Real\cong
	H_3(\mathrm{EL}(k);\Integral)\otimes_\Integral\Real\cong
	H_3(\mathrm{EL}(N,k);\Integral)\otimes_\Integral\Real\cong
	H_3(\mathrm{SL}(N,k);\Integral)\otimes_\Integral\Real$;
	see \cite[Proposition 2.5 (a)]{Sah_III}, \cite[Theorem 4.11]{Kallen_homological_stability}
	and \cite[{\S 16, Corollary 16.3 and Remark}]{Milnor_book_K} for the steps, respectively.
	\end{remark}
	
	For any prime $p$ in $\Rational$,
	there is a canonical ring isomorphism
	$k_p=k\otimes_\Rational\Rational_p\cong\prod_{v|p} k_v$.
	Denote by $O$ the ring of integers in $k$.
	We denote $O_p=O\otimes_\Integral \Integral_p\cong\prod_{v|p} O_v$,
	and $O_{\{p\}}=k\cap O_p$ as viewed in $k_p$.
	Analogous to (\ref{Borel_infinity_k}) and (\ref{Borel_infinity_k_map}), 
	we introduce
	\begin{equation}\label{Borel_p_k}
	b_p\in \bigoplus_{v|p} H^3_{\mathtt{la}}(\mathrm{GL}(N,O_v);k_v),
	\end{equation}
	and obtain a $\Rational_p$--linear homomorphism
	\begin{equation}\label{Borel_p_k_map}
	b_p\colon H_3\left(\mathrm{SL}(N,O_{\{p\}});\Integral\right)\otimes_\Integral\Rational_p\to k_p.
	\end{equation}
	In this paper, we refer to $b_p$ in (\ref{Borel_p_k_map})
	as the \emph{$p$--adic Borel regulator map}	associated to the number field $k$.
	In (\ref{Borel_p_k_map}), one may actually take any $N\geq7$,
	and the following hypothesis is 
	a special case of the $p$--adic Borel regulator injectivity conjecture
	(Conjecture \ref{Borel_hypothesis_p_EZ}),
	parallel to Theorem \ref{Borel_theorem_infinity},
	(see Remark \ref{Borel_hypothesis_p_remark}).
		
	\begin{hypothesis}\label{Borel_hypothesis_p}
	For every finite algebraic field extension $k$ over $\Rational$
	with the ring of integers $O$,
	there exists infinitely many primes $p$ in $\Rational$,
	such that	the following condition on $k$ and $p$ holds true:
	\begin{itemize}
	\item 
	The $p$--adic Borel regulator map $b_p$ associated to $k$,
	as defined in (\ref{Borel_p_k_map}), 
	is injective.
	\end{itemize}
	\end{hypothesis}
	
	\begin{remark}\label{Borel_hypothesis_p_remark}
	By the same procedure as in Remark \ref{Borel_theorem_infinity_remark},
	there is a canonical abelian group homomorphism $K_{2m-1}(O)\to \bigoplus_{v|p} k_v$,
	obtained using the Hurewicz homomorphism and the $m$--th $k_v$--Borel regulators.
	Replacing with the natural isomorphisms
	$K_n(O)\otimes_\Integral\Rational_p\cong 
	K_n(k)\otimes_\Integral\Rational_p$ with $n\geq2$
	\cite[Chapter 5, Corollary 5.3.28 and Theorem 5.3.2]{Rosenberg_book},
	we obtain the $\Rational_p$--linear homomorphism
	$K_{2m-1}(k)\otimes_\Integral\Rational_p\to \bigoplus_{v|p} k_v$.
	This is what we call the $m$--th \emph{$p$--adic Borel regulator map}
	associated to $k$, which makes precise the statement of Conjecture \ref{Borel_hypothesis_p_EZ}.
	This way our Conjecture \ref{Borel_hypothesis_p_EZ} 
	paraphrases \cite[Conjecture 2.9]{Calegari_stable}.
	While the conjecture remains widely open in general,
	some hypothetical criterion or partial evidence 
	has been pointed out by F.~Calegari, 
	see \cite[Lemma 2.11 and Proposition 2.14]{Calegari_stable}.
	When $n=3$ and $N\geq7$, 
	there are natural isomorphisms
	$K_3(O)\otimes_\Integral\Rational_p\cong
	K_3(O_{\{p\}})\otimes_\Integral\Rational_p\cong
	H_3(\mathrm{EL}(O_{\{p\}});\Integral)\otimes_\Integral\Rational_p\cong
	H_3(\mathrm{EL}(N,O_{\{p\}});\Integral)\otimes_\Integral\Rational_p
	\cong H_3(\mathrm{SL}(N,O_{\{p\}});\Integral)\otimes_\Integral\Rational_p$,
	which justify our Hypothesis \ref{Borel_hypothesis_p}
	as an implication of Conjecture \ref{Borel_hypothesis_p_EZ};
	see 
	\cite[Chapter 5, Corollary 5.3.28 and Theorem 5.3.2]{Rosenberg_book},
	\cite[Proposition 2.5 (a)]{Sah_III}, \cite[Theorem 4.11]{Kallen_homological_stability},
	and \cite[{\S 16, Corollary 16.3 and Remark}]{Milnor_book_K} for the steps, respectively.
	\end{remark}

\section{Preliminary II}\label{Sec-prelim_II}
In this preliminary section, we review quaternion algebras, mostly following 
the textbook of Maclachlan and Reid \cite{Maclachlan--Reid_book}.
Next, we review some dynamical aspects of 
pseudo-Anosov automorphisms and their suspension flows,
following the perspective of \cite[Section 5]{Liu_vhsr}.

\subsection{Quaternion algebras}
	Let $F$ be a field of characteristic $0$. 
	A \emph{quaternion algebra} $\mathscr{A}$	over $F$
	is an (associative unital) algebra 
	which is isomorphic to a $4$--dimensional $F$--vector space
	$F\mathbf{1}\oplus F\mathbf{i}\oplus F\mathbf{j}\oplus F\mathbf{k}$
	enriched with the multiplication rules $\mathbf{i}^2=a\mathbf{1}$, $\mathbf{j}^2=b\mathbf{1}$, and 
	$\mathbf{i}\mathbf{j}=-\mathbf{j}\mathbf{i}=\mathbf{k}$,
	where $a,b\in F^\times$ are nonzero elements of $F$.
	Adopting the Hilbert symbol,
	this is written as
	\begin{equation}\label{Hilbert_symbol_def}
	\mathscr{A}\cong \left(\frac{a,b}{F}\right).
	\end{equation}
	Hence $\mathbf{k}^2=-ab\mathbf{1}$, $\mathbf{j}\mathbf{k}=-\mathbf{k}\mathbf{j}=-b\mathbf{i}$, 
	and $\mathbf{k}\mathbf{i}=-\mathbf{i}\mathbf{k}=-a\mathbf{j}$.
	Note that different Hilbert symbols may present isomorphic quaternion algebras,
	for example, $(a,b/F)\cong(b,a/F)\cong(a,-ab/F)\cong(b,-ab/F)\cong(ax^2,by^2/F)$, 
	where $a,b,x,y\in F^\times$.

	\begin{example}\label{qa_example}\
	\begin{enumerate}
	\item 
	Any quaternion algebra $\mathscr{A}$ over 
	a field $F$ of characteristic $0$ (or more generally, $\neq2$)
	is either a division algebra or
	isomorphic to the algebra $\mathrm{Mat}(2,F)$ of $2\times2$--matrices with entries in $F$
	\cite[Chapter 2, Theorem 2.1.7]{Maclachlan--Reid_book}.
	In particular, $\mathscr{A}\cong\mathrm{Mat}(2,F)\cong(1,1/F)$ 
	if $F$ is algebraically closed.
	\item 
	There are exactly two quaternion algebras over $\Real$
	up to isomorphism:
	the matrix algebra $\mathrm{Mat}(2,\Real)\cong(1,1/\Real)$ 
	and the division algebra of Hamilton's quaternions
	$\mathcal{H}\cong(-1,-1/\Real)$. 
	The similar dichotomy occurs for
	quaternion algebras over $\mathfrak{p}$--adic number fields $k_{\mathfrak{p}}$,
	where $k$ is an algebraic field extension over $\Rational$ of finite degree,
	and where $\mathfrak{p}$ is a prime, (that is, a nonzero prime ideal of 
	the subring of integers $O_k$).
	Besides the matrix algebra $\mathrm{Mat}(2,k_{\mathfrak{p}})\cong(1,1/k_{\mathfrak{p}})$,
	there is a unique division algebra up to isomorphism.
	It has a Hilbert symbol $(\pi,u/k_{\mathfrak{p}})$,
	such that $\pi$ is a uniformizer of the $\mathfrak{p}$--adic valuation ring
	$O_{k,\mathfrak{p}}$, and $k(\sqrt{u})$ is the unique unramified quadratic extension of $k$,
	up to isomorphism.
	In all the above cases,
	the division algebra is said to be \emph{ramified}, over $\Real$ or $k_{\mathfrak{p}}$,
	while the matrix algebra is \emph{unramified} (or \emph{split}).
	(See \cite[Chapter 2, Sections 2.5 and 2.6]{Maclachlan--Reid_book}.)
	\item
	Let $k$ is a finite algebraic field extension over $\Rational$,
	and $\mathscr{B}$ be a quaternion algebra over $k$.
	Then the Archimedean places correspond naturally with the real embeddings $\sigma\colon k\to\Real$
	and the complex-conjugate pairs of the (imaginary) complex embeddings 
	$\{\sigma,\bar{\sigma}\colon k\to \Complex\}$;
	the non-Archimedean places correspond naturally with the primes $\mathfrak{p}$ of $k$.
	The complex quaternion algebras
	$\mathscr{B}\otimes_\sigma\Complex\cong\mathscr{B}\otimes_{\bar\sigma}\Complex$
	at the complex places are always unramified,
	but the real quaternion algebras $\mathscr{B}\otimes_\sigma\Real$
	at the real places may ramify or split.
	Ramification or splitting may also occur with 
	the $\mathfrak{p}$--adic quaternion algebra $\mathscr{B}\otimes_kk_{\mathfrak{p}}$
	at a $\mathfrak{p}$--adic place.
	The classification of quaternion algebras over $k$
	demonstrates that the isomorphism type of $\mathscr{B}$
	is uniquely determined by its locus of ramification,
	that is, the set of places where the completion quaternion algebra ramifies.
	Moreover, the locus of ramification is always a finite set,
	consisting of an even number of real or non-Archmedean places,
	and any such configuration can be realized with a quaternion algebra over $k$.
	(See \cite[Chapter 2, Section 2.7, and Chapter 7, Theorem 7.3.6]{Maclachlan--Reid_book}.)
	\end{enumerate}
	\end{example}
	
	More intrinsically,
	a quaternion algebra $\mathscr{A}$ can be characterized
	as a simple, central, $4$--dimensional algebra over $F$.
	The $F$--vector subspaces $F\mathbf{1}$ and $F\mathbf{i}\oplus F\mathbf{j}\oplus F\mathbf{k}$
	can be characterized respectively as 
	the center of $\mathscr{A}$	and 
	the pure quaternion complement of the center,
	whose nonzero elements are noncentral and have central squares.
	The \emph{quaternion conjugation} on $\mathscr{A}$
	is thereby intrinsically defined,
	as 
	$$\overline{t\mathbf{1}+x\mathbf{i}+y\mathbf{j}+z\mathbf{k}}=t\mathbf{1}-x\mathbf{i}-y\mathbf{j}-z\mathbf{k},$$
	where $t,x,y,z\in F$.
	It allows one to introduce
	the (reduced) \emph{trace} and the (reduced) \emph{norm} of any quaternion $q\in\mathscr{A}$:
	\begin{equation}\label{Tr_Nr_def}
	\mathrm{Tr}_{\mathscr{A}/F}(q)=q+\bar{q},\mbox{ and }\mathrm{Nr}_{\mathscr{A}/F}(q)=q\bar{q},
	\end{equation}
	respectively,
	both valued in $F$.
	The functions $\mathrm{Tr}_{\mathscr{A}/F}\colon \mathscr{A}\to F$ and 
	$\mathrm{Nr}_{\mathscr{A}/F}\colon \mathscr{A}^\times\to F^\times$ 
	are homomorphisms	of the $F$--vector space $\mathscr{A}$ 
	and the multiplicative group of invertibles $\mathscr{A}^\times$,
	respectively.
	The trace and the norm fit into a neat quadratic equation:
	\begin{equation}\label{Tr_Nr_equation}
	q^2-\mathrm{Tr}_{\mathscr{A}/F}(q)\cdot q+\mathrm{Nr}_{\mathscr{A}/F}(q)=0.
	\end{equation}
	(See \cite[Chapter 2, Section 2.1]{Maclachlan--Reid_book}.)

	\begin{proposition}\label{trace_relations}
		Let $\mathscr{A}$ be a quaternion algebra over a field $F$ of characteristic $0$.
		For any $A,B,C,D\in \mathscr{A}$ with $\mathrm{Nr}(A)=\mathrm{Nr}(B)=\mathrm{Nr}(C)=\mathrm{Nr}(D)=1$, 
		the following identities all hold true:
		\begin{enumerate}
		\item
		$\mathrm{Tr}(1)=2$;\\
		$\mathrm{Tr}(A^{-1})=\mathrm{Tr}(A)$;\\ 
		$\mathrm{Tr}(A^2)=\mathrm{Tr}(A)^2-2$.
		\item
		$\mathrm{Tr}(BA)=\mathrm{Tr}(AB)$;\\
		$\mathrm{Tr}(BAB^{-1})=\mathrm{Tr}(A)$;\\
		$\mathrm{Tr}(AB^{-1})=\mathrm{Tr}(A)\cdot\mathrm{Tr}(B)-\mathrm{Tr}(AB)$;\\
		$\mathrm{Tr}(ABA^{-1}B^{-1})=
		\mathrm{Tr}(A)^2+\mathrm{Tr}(B)^2+\mathrm{Tr}(AB)^2
		-\mathrm{Tr}(A)\cdot\mathrm{Tr}(B)\cdot\mathrm{Tr}(AB)-2$.
		\item
		$2\cdot\mathrm{Tr}(ABCD)=\\
		\mathrm{Tr}(A)\cdot\mathrm{Tr}(BCD)+\mathrm{Tr}(B)\cdot\mathrm{Tr}(ACD)
		+\mathrm{Tr}(C)\cdot\mathrm{Tr}(ABD)+\mathrm{Tr}(D)\cdot\mathrm{Tr}(ABC)
		+\mathrm{Tr}(AB)\cdot\mathrm{Tr}(CD)-\mathrm{Tr}(AC)\cdot\mathrm{Tr}(BD)+\mathrm{Tr}(AD)\cdot\mathrm{Tr}(BC)
		-\mathrm{Tr}(A)\cdot\mathrm{Tr}(B)\cdot\mathrm{Tr}(CD)
		-\mathrm{Tr}(C)\cdot\mathrm{Tr}(D)\cdot\mathrm{Tr}(AB)
		-\mathrm{Tr}(A)\cdot\mathrm{Tr}(D)\cdot\mathrm{Tr}(BC)
		-\mathrm{Tr}(B)\cdot\mathrm{Tr}(C)\cdot\mathrm{Tr}(AD)
		+\mathrm{Tr}(A)\cdot\mathrm{Tr}(B)\cdot\mathrm{Tr}(C)\cdot\mathrm{Tr}(D).$
		\end{enumerate}
		Here we drop the subscripts in $\mathrm{Tr}_{\mathscr{A}/F}$ and $\mathrm{Nr}_{\mathscr{A}/F}$ for simplicity.
	\end{proposition}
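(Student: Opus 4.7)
The plan is to derive everything from the quadratic relation $q^2 = \mathrm{Tr}(q) \cdot q - \mathrm{Nr}(q)$ of equation~\ref{Tr_Nr_equation} and its consequence $\bar{q} = \mathrm{Tr}(q) - q$, combined with the identity $A^{-1} = \bar{A}$ whenever $\mathrm{Nr}(A) = 1$. Item (1) is then immediate: $\mathrm{Tr}(1) = 1 + \bar{1} = 2$; $\mathrm{Tr}(A^{-1}) = \mathrm{Tr}(\bar{A}) = \bar{A} + A = \mathrm{Tr}(A)$ by the involutivity of conjugation; and $\mathrm{Tr}(A^2) = \mathrm{Tr}(A)^2 - 2$ by applying the $F$-linear trace to the quadratic relation.

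For item (2), the preliminary step is the cyclic property $\mathrm{Tr}(xy) = \mathrm{Tr}(yx)$, valid for arbitrary $x, y \in \mathscr{A}$. I would prove it by substituting $\bar{x} = \mathrm{Tr}(x) - x$ and $\bar{y} = \mathrm{Tr}(y) - y$ to obtain $\bar{x}\bar{y} - \bar{y}\bar{x} = xy - yx$, and then combining with $\overline{xy} = \bar{y}\bar{x}$. The conjugation invariance $\mathrm{Tr}(BAB^{-1}) = \mathrm{Tr}(A)$ follows at once from cyclicity; $\mathrm{Tr}(AB^{-1}) = \mathrm{Tr}(A\bar{B}) = \mathrm{Tr}(A)\mathrm{Tr}(B) - \mathrm{Tr}(AB)$ follows by $F$-linearity; and the Fricke commutator formula is obtained by expanding $ABA^{-1}B^{-1} = AB(\mathrm{Tr}(A) - A)(\mathrm{Tr}(B) - B)$, distributing, and collecting terms using the identities already proved together with $A^2 = \mathrm{Tr}(A) A - 1$ and $B^2 = \mathrm{Tr}(B) B - 1$.

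For item (3), the cleanest route is to reduce to the split case: after scalar extension to an algebraic closure $F^\algcl$, the algebra $\mathscr{A} \otimes_F F^\algcl$ becomes $\mathrm{Mat}(2, F^\algcl)$ by Example~\ref{qa_example}(1), with reduced trace and norm identifying with matrix trace and determinant. Both sides of the asserted identity are polynomials in the traces of subwords of $ABCD$, so it suffices to verify the identity universally for $A, B, C, D \in \mathrm{SL}(2, F^\algcl)$. I would proceed by iterating the two-variable formulas of item (2): starting from $\mathrm{Tr}(A \cdot BCD)$, apply $\mathrm{Tr}(XY^{-1}) = \mathrm{Tr}(X)\mathrm{Tr}(Y) - \mathrm{Tr}(XY)$ with $Y = A^{-1}$ to split off $A$, then propagate the resulting $\bar A$ inward through $BCD$ by repeated use of the same identity, reducing successively to shorter words.

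The main obstacle is the combinatorial bookkeeping in item (3): the recursion produces many trace monomials that must combine to reproduce the twelve terms on the right with the correct signs. A safer approach would be to organize the computation symmetrically in $A, B, C, D$, or alternatively to invoke Procesi's description of the ring of $\mathrm{SL}(2)$-invariants on tuples, in which the identity becomes an explicit syzygy among a known generating set of trace monomials; this eliminates guesswork in matching the coefficients and confirms the formula as stated.
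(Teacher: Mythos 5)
Your derivation of items (1) and (2) directly from the quaternion-algebra axioms (the quadratic relation $q^2=\mathrm{Tr}(q)q-\mathrm{Nr}(q)$, the consequence $\bar q=\mathrm{Tr}(q)-q$, and $A^{-1}=\bar A$ when $\mathrm{Nr}(A)=1$) is correct and, in fact, more self-contained than what the paper does. The paper's own treatment (Remark \ref{trace_relations_remark}) does not prove any of the identities internally: it observes that $A,B,C,D$ lie in a quaternion subalgebra over a finitely generated subfield $F'\subset F$, embeds $F'$ into $\Complex$ so that $\mathscr{A}'\otimes_{F'}\Complex\cong\mathrm{Mat}(2,\Complex)$ with $\mathrm{Nr}$ matching $\det$, and then cites Maclachlan--Reid, Chapter 3, Section 3.4 for the $\mathrm{SL}(2,\Complex)$ case. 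Your reduction in item (3) to the split case over $F^\algcl$ plays exactly the same role as the paper's embedding into $\Complex$; the two are interchangeable because the identities are polynomial in the reduced trace and norm and these are preserved under scalar extension.

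The one place where your sketch would actually stall is the ``naive recursion'' you describe for item (3). Splitting off $A$ via $\mathrm{Tr}(XY^{-1})=\mathrm{Tr}(X)\mathrm{Tr}(Y)-\mathrm{Tr}(XY)$ with $X=BCD$, $Y=A^{-1}$ gives
$\mathrm{Tr}(ABCD)=\mathrm{Tr}(A)\mathrm{Tr}(BCD)-\mathrm{Tr}(BCDA^{-1})$,
but substituting $A^{-1}=\mathrm{Tr}(A)-A$ into the last term yields $\mathrm{Tr}(BCDA^{-1})=\mathrm{Tr}(A)\mathrm{Tr}(BCD)-\mathrm{Tr}(ABCD)$, so the recursion is circular: word length does not decrease and you recover the term you started from. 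What genuinely lowers the word length is the \emph{polarized} Cayley--Hamilton identity $XY+YX=\mathrm{Tr}(X)\,Y+\mathrm{Tr}(Y)\,X+\left(\mathrm{Tr}(XY)-\mathrm{Tr}(X)\mathrm{Tr}(Y)\right)\cdot\mathbf{1}$, which, applied to adjacent pairs among $A,B,C,D$ and averaged over pairings, produces the twelve-term formula with the factor $2$ on the left. Your instinct that the bookkeeping is the obstacle and that one should fall back on Procesi's second fundamental theorem for $\mathrm{SL}(2)$-trace invariants (or simply cite Maclachlan--Reid, as the paper does) is well-placed; the plan as literally written, however, would need to replace the $\mathrm{Tr}(XY^{-1})$ recursion with the polarization identity to avoid going in a circle.
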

	
	\begin{remark}\label{trace_relations_remark}
		These trace relations are all proved in \cite[Chapter 3, Section 3.4]{Maclachlan--Reid_book}
		for matrices $A,B,C,D\in\mathrm{SL}(2,\Complex)$.
		They also hold in the $\mathscr{A}/F$ setting.
		In fact, one may take a suitable subfield $F'$ of $F$ finitely generated over $\Rational$,
		and a quaternion subalgebra $\mathscr{A}'$ over $F'$ containing 
		the present quaternions $A,B,C,D\in\mathscr{A}$.
		By embedding $F'$ into $\Complex$, there is an algebra embedding of
		$\mathscr{A}'$ into $\mathscr{\mathscr{A}}'\otimes_{F'}\Complex\cong \mathrm{Mat}(2,\Complex)$.
		The determinant function on complex $2\times 2$--matrices
		agrees with $\mathrm{Nr}_{\mathscr{A}'/F'}=\mathrm{Nr}_{\mathscr{A}/F}$.
		This adapts the $\mathrm{SL}(2,\Complex)$ case to our setting.
	\end{remark}

\subsection{Pseudo-Anosov automorphisms}
	Let $S$ be an orientable connected closed surface of genus $\geq2$.
	A \emph{pseudo-Anosov} automorphism $f\colon S\to S$
	refers to an orientation-preserving homeomorphism with the following property:
	There exist measured foliations 
	$(\mathscr{F}^{\mathtt{s}},\mu^{\mathtt{s}})$ and $(\mathscr{F}^{\mathtt{u}},\mu^{\mathtt{u}})$
	on $S$,	having identical singular points and transverse anywhere else,
	such that
	$f\cdot(\mathscr{F}^{\mathtt{u}},\mu^{\mathtt{u}})=(\mathscr{F}^{\mathtt{u}},\lambda\mu^{\mathtt{u}})$
	and
	$f\cdot(\mathscr{F}^{\mathtt{s}},\mu^{\mathtt{s}})=(\mathscr{F}^{\mathtt{s}},\lambda^{-1}\mu^{\mathtt{s}})$
	hold for some constant $\lambda>1$.
	
	We recall that 
	a \emph{measured foliation} on $S$ is 
	a foliation $\mathscr{F}$ with prong singularities
	together with a transverse measure $\mu$ that is invariant under the holonomy of $\mathscr{F}$.
	A \emph{prong singularity} is modeled on the singularity
	of a holomorphic quadratic differential $z^{k-2}\ud z^2$ on $\Complex$, 
	where $k=3,4,5,\cdots$ is called the \emph{prong number}.
	(See \cite[{Expos\'e 1}]{FLP_book}.)
	The constant $\lambda$ is called the \emph{stretch factor}.
	The measured foliations $(\mathscr{F}^{\mathtt{s}},\mu^{\mathtt{s}})$ and $(\mathscr{F}^{\mathtt{u}},\mu^{\mathtt{u}})$
	are called the \emph{stable} and the \emph{unstable} measured foliations, respectively.
	They all uniquely determined by $f$.
	Moreover, any other pseudo-Anosov automorphism in the isotopy class of $f$
	is a conjugate of $f$ by a self-homeomorphism of $S$ that is isotopic to the identity,
	(see \cite[Expos\'e 12]{FLP_book}).
	
	Let $f\colon S\to S$ be a pseudo-Anosov automorphism.
	We obtain an orientable connected closed $3$--manifold
	\begin{equation}\label{M_f_def}
	M_f=\frac{S\times\Real}{(x,r+1)\sim(f(x),r)}.
	\end{equation}
	This is called the \emph{mapping torus} of $(S,f)$.
	The inclusion of $S\times\{0\}$ identifies $S$
	as a distinguished surface fiber in $M_f$.
	There is a the continuous family of homeomorphisms
	\begin{equation}\label{theta_t_def}
	\theta_t\colon M_f\to M_f
	\end{equation}	
	parametrized by $t\in\Real$ and determined by $(x,r)\mapsto (x,r+t)$,
	This is called the \emph{suspension flow} on $M_f$.
	There is a distinuished cohomology class of $M_f$, denoted as
	\begin{equation}\label{phi_f_def}
	\phi_f\in H^1(M_f;\Integral).
	\end{equation}
	It is	represented by the distinguished projection $M_f\to \Sph^1$,
	where $\Sph^1$ is identified with $\Real/\Integral$ 
	and the projection is determined by $(x,r)\mapsto r$.
		
	For any (nonzero) natural number $m\in\Natural$, 
	an \emph{$m$--periodic point} of $f$ 
	refers to a fixed point $p\in S$ of $f^m$.
	We obtain a line $\Real\to S\times\Real\colon r\mapsto (p,r)$,
	which descends to a loop $\Real/m\Integral\to M_f$.
	This gives rise to an $m$--periodic trajectory of the suspension flow.
	The construction sets up a natural bijective correspondence
	between the $f$--iteration orbits
	of $m$--periodic points on the surface 
	and 
	the $m$--periodic trajectories of 
	the suspension flow on the mapping torus.
	As $f$ is pseudo-Anosov,
	no periodic trajectories in $M_f$ are freely homotopic 
	to any other \cite[Corollary 2.3]{Jiang--Guo} 
	(see also \cite[Remark 2.1]{Liu_vhsr}).
	Therefore, the periodic orbits of $f$
	can be identified with mutually distinct
	conjugate classes in the fundamental group of $M_f$.
	For all $m\in\Natural$,
	the numbers of $m$--periodic orbit of $f$ are always finite,
	denoted as $N_m(f)$.
	They are called the \emph{Nielsen numbers} (of periodic orbit classes),
	and can be characterized purely in terms of the homotopy class of $f$.
	
	In this paper,
	we often fix a universal covering space $\widetilde{S}_\univ\to S$,
	and denote by $\pi_1(S)$ the deck transformation group acting on $\widetilde{S}_\univ$.
	Then the composite map 
	$$\xymatrix{
	\widetilde{S}_\univ\times\Real \ar[rr]^{\mathrm{cov}\times\mathrm{id}} & &
	S\times \Real \ar[rr]^{\mathrm{quot}} & & M_f
	}$$
	is a universal covering projection onto $M_f$.
	We denote by $\pi_1(M_f)$ the deck transformation group acting on 
	$\widetilde{S}_\univ\times \Real$,
	and by $\mathrm{Orb}(\pi_1(M_f))$ the set of conjugacy classes of $\pi_1(M_f)$.
	For any $m\in\Natural$,
	denote by 
	$\mathrm{Per}_m(f)\subset S$ the set of $m$--periodic points of $f$,
	and by
	$\mathrm{Orb}_m(f)=\mathrm{Per}_m(f)/\langle f\rangle$ 
	the set of $m$--periodic orbits.
	For any $\mathbf{O}\in\mathrm{Orb}_m(f)$,
	the $m$--periodic trajectory in $M_f$
	represents a free-homotopy loop. 
	We treat the free-homotopy loop 
	equivalently as a conjugacy class of $\pi_1(M_f)$,
	and denote it as $\ell_m(f,\mathbf{O})$.
	As mentioned, we obtain an injective map
	\begin{equation}\label{ell_m}
	\begin{array}{cc}
	\mathrm{Orb}_m(f)\to \mathrm{Orb}(\pi_1(M_f))\colon &
	\mathbf{O}\mapsto \ell_m(f,\mathbf{O})
	\end{array}
	\end{equation}
	for every $m\in\Natural$.
	Note that the homology class $[\ell_m(f,\mathbf{O})]\in H_1(M_f;\Integral)$
	satisfies $\phi_f(\ell_m(f,\mathbf{O}))=m$,
	treating $\phi_f$ naturally as a $\Integral$--linear homomorphism 
	$H_1(M_f;\Integral)\to \Integral$.
	
	The measure theoretical dynamics of 
	a pseudo-Anosov automorphism is 
	equivalent to a mixing Markov process.
	In this characterization,
	$f$ is comprehended 
	as a self-homeomorphism of $S$ that preserves 
	the product measure $\mu^{\mathtt{s}}\times\mu^{\mathtt{u}}$.
	With a 
	Markov partition,
	the system can be  effectively by a symbolic approach,
	as a subshift of finite type.
	The symbolic model is usually represented as 
	a directed graph or its adjacency matrix,
	called the \emph{transition graph} or the \emph{transition matrix}.
	In this paper, 
	we adopt the transition graph model and 
	use its dynamical cycles 
	to study periodic trajectories of the suspension flow.
	
	A \emph{directed graph} is a cell $1$--complex with oriented $1$--cells.
	We call the $0$--cells \emph{vertices}, and the oriented $1$--cells
	\emph{directed edges}.
	A \emph{dynamical cycle} is an immersed loop
	obtained by cyclically concatenating finitely many 
	directly consecutive directed edges. 
	Unless otherwise declared,
	we do not assume a base point	or a preferred parametrization
	when using this term.
	A directed graph is said to be \emph{irreducible} 
	(also called \emph{strongly connected})
	if every pair of (not necessarily distinct) vertices occurs
	in some dynamical cycle.
	In particular, irreducible directed graphs 
	are all topologically connected.
	
	\begin{proposition}\label{dc_pt_correspondence}
		Let $S$ be an orientable connected closed surface of genus $\geq2$.
		Let $f\colon S\to S$ be a pseudo-Anosov automorphism.
		
		Then, there exist an irreducible directed graph $T$,
		and a continuous map $T\to M_f$ 
		which is surjective	on the fundamental group level.
		Moreover, up to free homotopy of loops,
		every dynamical cycle in $T$ projects a periodic trajectory in $M_f$,
		and every periodic trajectory in $M_f$ admits a finite cyclic cover
		which lifts to a dynamical cycle in $T$, among finitely many choices.
		
		In fact, $T$ can be construction as the transition graph $T_{f,\mathcal{R}}$
		associated to a Markov partition $\mathcal{R}$ of $S$ with respect to $f$,
		and there is a continuous map	$T_{f,\mathcal{R}}\to M_f$, 
		which is canonical up to homotopy.
	\end{proposition}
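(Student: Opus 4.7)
The plan is to construct $T$ from a Markov partition and verify the listed properties through symbolic dynamics together with a geometric thickening argument.

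First, I invoke the classical existence result for a Markov partition $\mathcal{R}=\{R_1,\dots,R_n\}$ of $S$ associated to $f$, where the $R_i$ are closed rectangles whose sides are arcs of the stable and unstable foliations, whose interiors are pairwise disjoint, whose union is $S$, and for which $f$ is Markov in the usual sense that $f(R_i)\cap R_j$ is a (possibly empty) union of unstable rectangles spanning $R_j$. Such partitions exist for any pseudo-Anosov $f$ on a closed surface. I then define the transition graph $T=T_{f,\mathcal{R}}$ to have a vertex $v_i$ for each $R_i$ and a directed edge $e_{ij}^{(\alpha)}$ for each unstable-strip component of $f(R_i)\cap R_j$. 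Irreducibility of $T$ reduces to the topological mixing of $f$ under the invariant measure $\mu^{\mathtt{s}}\times\mu^{\mathtt{u}}$: some power of the adjacency matrix has all positive entries, so every pair of vertices lies on a common dynamical cycle.

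Next, I build the map $\Phi\colon T\to M_f$. Pick an interior point $x_i\in R_i$ for each vertex, and place $v_i$ at $(x_i,0)\in S\times\Real/{\sim}$. For each edge $e_{ij}^{(\alpha)}$, concatenate a suspension flow segment $\theta_t(x_i)$ for $t\in[0,1]$ followed by a short path in $S\times\{1\}=S\times\{0\}$ from $f(x_i)$ to $x_j$ chosen inside the strip $f(R_i)\cap R_j$ corresponding to the label $\alpha$. This $\Phi$ is canonical up to homotopy.

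The two periodic-trajectory assertions then follow from the standard symbolic-dynamics dictionary. A dynamical cycle $v_{i_1}\to v_{i_2}\to\cdots\to v_{i_m}\to v_{i_1}$ corresponds to an admissible word in the subshift of finite type, which by the Markov property determines a unique point $p\in R_{i_1}$ with $f^k(p)\in R_{i_{k+1}}$, and this is an $m$--periodic point of $f$; the loop $\Phi$ traces is freely homotopic to the periodic trajectory $\ell_m(f,\mathbf{O})$ of its orbit $\mathbf{O}$, since both cross the same rectangles in the same cyclic order and $M_f$ is aspherical. Conversely, each $m$--periodic orbit $\mathbf{O}$ avoiding the $1$--skeleton of $\mathcal{R}$ has a unique symbolic coding, and the finitely many orbits meeting that skeleton have finitely many codings (one per choice of rectangle at each singular visit), giving the cyclic-cover-and-finiteness clause after replacing the orbit by an iterate if necessary.

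The main obstacle is $\pi_1$-surjectivity of $\Phi$, which is not automatic from covering periodic trajectories. I handle it by observing that the $2$--complex $\Phi(T)\cup\bigsqcup_i(R_i\times[0,1])$, obtained by thickening each edge $e_{ij}^{(\alpha)}$ to the corresponding strip of $f(R_i)\cap R_j$ in the flow box $R_i\times[0,1]$, is homotopy equivalent to $M_f$: this is exactly the CW decomposition of $M_f$ coming from the Markov partition. Collapsing the rectangle $2$--cells onto their spines (which deformation retract onto vertices and edges of $T$) shows that the inclusion $T\hookrightarrow M_f$ induced by $\Phi$ is $\pi_1$--surjective. Strictly speaking one only needs $\pi_1$--surjectivity, and the collapse argument yields it without requiring any injectivity statement, so routine verification that the $2$--cells attach consistently with the Markov condition completes the proof.
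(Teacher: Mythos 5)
The paper does not supply a proof of this proposition: it records it explicitly as a simplified summary of Lemmas 5.2, 5.3, 5.6, and 5.7 of \cite{Liu_vhsr}, and the accompanying remark only recalls the construction of $T_{f,\mathcal{R}}$, the subset $\mathring{X}_{f,\mathcal{R}}\subset M_f$, the collapse $\mathring{X}_{f,\mathcal{R}}\to T_{f,\mathcal{R}}$, and the identification of the path compactification with the flow-box complex. Your Markov-partition and symbolic-dynamics setup mirrors that reference, and the forward direction (dynamical cycle $\mapsto$ periodic trajectory, freely homotopic because both loops visit the same contractible flow boxes in the same cyclic order) is essentially right, though asphericity of $M_f$ is not really the ingredient being used there.

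The genuine gap is in the step you yourself call ``the main obstacle'': $\pi_1$--surjectivity of $\Phi$. The collapse you propose cannot do the job. Collapsing contractible subcomplexes of a CW complex is a homotopy equivalence, and $M_f$ is an aspherical closed $3$--manifold, so $\pi_1(M_f)$ has nontrivial $H_3$ and in particular is not free; no collapse of $M_f$ lands on a graph, and the claim that ``collapsing the rectangle $2$--cells onto their spines'' produces $T$ and hence $\pi_1$--surjectivity is false as stated. (It is also telling that you call $\Phi(T)\cup\bigsqcup_i(R_i\times[0,1])$ a ``$2$--complex,'' when the pieces $R_i\times[0,1]$ are $3$--cells whose union is all of $M_f$.) Nor is $\Phi(T)$ the $1$--skeleton of the flow-box CW decomposition, so the standard ``$1$--skeleton surjects on $\pi_1$'' argument does not apply either: the $1$--skeleton of that decomposition consists of flow-swept edges of the Markov rectangles, while $\Phi(T)$ is a transversal, dual-type graph with one vertex in the interior of each $R_i\times\{0\}$ and one edge running through each flow box. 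What is actually needed --- and what \cite{Liu_vhsr} establishes via the flow-box complex and its path compactification --- is that the inclusion $\mathring{X}_{f,\mathcal{R}}\hookrightarrow M_f$ is $\pi_1$--surjective; your proposal does not reproduce that argument. A smaller but real gap: you invoke ``after replacing the orbit by an iterate if necessary'' without ever explaining why an iterate might be needed; the point is that a periodic orbit meeting the boundary of the Markov partition may have no periodic itinerary of its own minimal period, only of a proper multiple, which is exactly the boundary-behavior subtlety the cited lemmas are careful about.
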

	
	\begin{remark}
		See \cite[Section 5]{Liu_vhsr}.
		In fact, Proposition \ref{dc_pt_correspondence}
		is a simplified summary of 
		Lemmas 5.2, 5.3, 5.6, and 5.7 thereof.
		See also \cite[Expos\'es 9 and 10]{FLP_book}
		for exposition of the pseudo-Anosov theory
		from a Markov partition approach.		
	\end{remark}
	
	We briefly recall the construction of 
	$T_{f,\mathcal{R}}$ in \cite[Section 5]{Liu_vhsr}
	for the reader's reference.
		Roughly speaking, 
		a \emph{Markov partition} $\mathcal{R}=\{R_1,\cdots,R_k\}$ with respect to 
		a pseudo-Anosov automorphism $f\colon S\to S$
		is a collection of compact
		$(\mathscr{F}^{\mathtt{s}},\mathscr{F}^{\mathtt{u}})$--rectangles
		(or \emph{birectangles}), 
		which form a cover of $S$ without any overlap of positive area.
		Moreover, $f$ 
		streches any $R_i$ in the $\mathscr{F}^{\mathtt{u}}$--transverse direction by the factor $\lambda$,
		and shrinks it in the $\mathscr{F}^{\mathtt{s}}$--transverse direction by the same factor,
		such that $f(R_i)$ becomes a long and thin birectangle in $S$
		whose $\mathscr{F}^{\mathtt{u}}$--sides are contained, as subsegments,
		in some $\mathscr{F}^{\mathtt{u}}$--sides of some birectangles from $\mathcal{R}$.
		It is also required that $f(R_i)$ intersects $R_j$ 
		in at most one sub-birectangle, ignoring any null area components.
		Hence, there is a completely similar description of $f^{-1}(R_i)$,
		switching the roles of $\mathscr{F}^{\mathtt{u}}$ and $\mathscr{F}^{\mathtt{s}}$.
	
		Given a Markov partition $\mathcal{R}=\{R_1,\cdots,R_k\}$,
		the \emph{transition graph} $T_{f,\mathcal{R}}$ is constructed as follows.
		The vertices $v_i$ of $T_{f,\mathcal{R}}$ correspond bijectively with 
		all the birectangles $R_i\in\mathcal{R}$,
		and the directed edges $e_{ij}$ of $T_{f,\mathcal{R}}$ correspond bijectively with
		all the pairs $(R_i,R_j)\in\mathcal{R}\times\mathcal{R}$ 
		with intersection $R_i\cap f^{-1}(R_j)$ of positive area.
		Obtain a subset $\mathring{X}_{f,\mathcal{R}}$ of $M_f$
		as the union of all the subsets
		$\mathrm{int}(R_i)\times\{0\}$ and 
		$(\mathrm{int}(R_i)\cap f^{-1}(\mathrm{int}(R_j)))\times(0,1)$,
		(represented with parameters in $S\times\Real$).
		Then there is an obvious collapsing map
		$\mathring{X}_{f,\mathcal{R}}\to T_{f,\mathcal{R}}$,
		which is a homotopy equivalence.
		Up to homotopy,
		this gives rise to a distinguished map $T_{f,\mathcal{R}}\to M_f$,
		as the homotopy inverse followed by the inclusion.
		In fact, the path compactification of $\mathring{X}_{f,\mathcal{R}}$
		can be identified with the flow-box complex $X_{f,\mathcal{R}}$ 
		as introduced in \cite[Definition 5.1]{Liu_vhsr}.

\section{Periodic trajectories and arithmetic invariants}\label{Sec-traj_arithm}
In this section, we establish two criteria regarding 
irreducible $\mathrm{SL}(2,\Complex)$--representations of 
fundamental groups of pseudo-Anosov mapping tori
(Lemmas \ref{trace_field_pt} and \ref{quaternion_algebra_pt}).
These criteria relate the trace field and the ramification type
of the quaternion algebra (in the real case) 
with character values on the periodic trajectories.

For any group $\pi$,
an \emph{irreducible representation} $\rho\colon\pi\to\mathrm{SL}(2,\Complex)$
is a group homomorphism such that $\rho(\pi)$ preserves
no nontrivial invariant subspace in $\Complex^2$ \cite[Section 1.2]{CS_rep_var}.
In this case,
the \emph{trace field} of $\rho$ refers to the subfield of $\Complex$
generated by $\{\mathrm{tr}(\rho(g))\colon g\in\pi\}$ over $\Rational$,
and the \emph{quaternion algebra} of $\rho$ refers to
the subalgebra of $\mathrm{Mat}(2,\Complex)$ generated by $\{\rho(g)\colon g\in\pi\}$
over the trace field, which is indeed a quaternion algebra.
Note that if the trace field of $\rho$ contained in $\Real$,
then the completion of the quaternion algebra of $\rho$ is a quaternion algebra over $\Real$.
(See also Lemma \ref{irr_qarep_justification} and Definition \ref{irr_qarep_def}
in Section \ref{Sec-rep_qa} for justificaiton of the terminology 
in a slightly more general setting.)

\begin{lemma}\label{trace_field_pt}
	Let $S$ be an orientable connected surface of genus $\geq2$.
	Let $f\colon S\to S$ be a pseudo-Anosov automorphism.
	Denote by $M_f$ the mapping torus, furnished with the suspension flow.
	Suppose that $\rho\colon\pi_1(M_f)\to\mathrm{SL}(2,\Complex)$ is an irreducible representation.
			
	Then, any value of $\chi_\rho$ on $\pi_1(M_f)$ lies in the $\Integral$--subalgebra of $\Complex$
	generated by the values of $\chi_\rho$ on the set of periodic trajectories.
	In particular,
	the trace field of $\rho$ is the field of fractions of that subalgebra.
\end{lemma}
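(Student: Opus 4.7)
The plan is to reduce the statement to a computation on the transition graph of a Markov partition, after which the trace identities of Proposition \ref{trace_relations} can take over. Invoking Proposition \ref{dc_pt_correspondence}, I would fix a Markov partition $\mathcal{R}$ for $f$ with associated irreducible transition graph $T = T_{f,\mathcal{R}}$ and the canonical map $T \to M_f$, which is surjective on fundamental groups and carries every dynamical cycle in $T$ to a periodic trajectory in $M_f$. Fixing a base vertex $v_0 \in T$, the surjection $\pi_1(T,v_0) \to \pi_1(M_f)$ ensures that every conjugacy class of $\pi_1(M_f)$ admits a representative lifting to $\pi_1(T,v_0)$. Since $\chi_\rho$ evaluated on a dynamical cycle equals $\chi_\rho$ on the corresponding periodic trajectory, the lemma reduces to the following claim: for every $L \in \pi_1(T,v_0)$, the value $\chi_\rho(L)$ lies in the $\Integral$-subalgebra of $\Complex$ generated by the values of $\chi_\rho$ on dynamical cycles of $T$.

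Before running the induction I would fix auxiliary forward paths. Strong connectivity of $T$ yields a directed spanning out-tree $\mathcal{T}^+$ rooted at $v_0$, which provides a directed forward path $P_v \colon v_0 \to v$ for every vertex $v$; independently I would choose a directed forward return path $Q_v \colon v \to v_0$ for each $v$. For any directed edge $e \colon v_i \to v_j$ in $T$, the loop $\gamma_e := P_{v_i} \cdot e \cdot Q_{v_j}$ is then a dynamical cycle at $v_0$.

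The main step is an induction on the number $b(L)$ of backward-edge occurrences in an edge-word representative of $L$. The base case $b(L)=0$ is immediate, as $L$ is then itself a dynamical cycle. For $b(L) \geq 1$, I would locate a backward occurrence $e^{-1}$ of some edge $e \colon v_i \to v_j$ and write $L = L_1 \cdot e^{-1} \cdot L_2$. A direct computation in $\pi_1(T,v_0)$ gives
\[
L \;=\; \alpha \cdot \gamma_e^{-1} \cdot \beta, \qquad \alpha := L_1 \cdot Q_{v_j}, \quad \beta := P_{v_i} \cdot L_2,
\]
since the $Q_{v_j} Q_{v_j}^{-1}$ and $P_{v_i}^{-1} P_{v_i}$ factors cancel. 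Cyclic invariance of the trace combined with the identity $\mathrm{tr}(XY^{-1}) = \mathrm{tr}(X) \cdot \mathrm{tr}(Y) - \mathrm{tr}(XY)$ from Proposition \ref{trace_relations}(2) then yields
\[
\chi_\rho(L) \;=\; \chi_\rho(\gamma_e) \cdot \chi_\rho(\beta\alpha) \;-\; \chi_\rho(\beta\alpha\gamma_e).
\]
The first factor is a trace on a dynamical cycle. Since $P_v$, $Q_v$, and $\gamma_e$ are all forward, each of $\beta\alpha$ and $\beta\alpha\gamma_e$ is a loop at $v_0$ with exactly $b(L)-1$ backward edges, so the induction hypothesis applies.

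With the main claim established, the second assertion follows at once: the trace field of $\rho$ is the subfield of $\Complex$ generated over $\Rational$ by $\{\chi_\rho(g) \colon g \in \pi_1(M_f)\}$, which by the first part coincides with the field of fractions of the $\Integral$-algebra generated by $\chi_\rho$ on periodic trajectories. The delicate part of the plan is the edge-count bookkeeping in the inductive identity, which crucially relies on the forwardness of $P_v$, $Q_v$, and $\gamma_e$; this forwardness is precisely what strong connectivity of the transition graph provides, and it is the key point that makes the Markov symbolic model suited to this statement.
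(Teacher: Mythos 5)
Your proof is correct, and it rests on the same two pillars as the paper's argument: the transition graph of a Markov partition (via Proposition \ref{dc_pt_correspondence}, with its $\pi_1$--surjectivity and the projection of dynamical cycles to periodic trajectories) and the trace identity $\mathrm{Tr}(AB^{-1}) = \mathrm{Tr}(A)\mathrm{Tr}(B) - \mathrm{Tr}(AB)$ from Proposition \ref{trace_relations}, applied inductively to eliminate backward edges one by one. The organization of the induction is genuinely different, though: the paper first normalizes a closed combinatorial path into the form $\xi_0\bar{\eta}_1\xi_1\cdots\bar{\eta}_n\xi_n$ by inserting terms $\delta\bar\delta$ ad hoc so that each backward segment $\bar\beta$ becomes the reversal of a \emph{closed} dynamical path $\overline{\beta\delta}$, and then inducts on the number $n$ of such blocks; you instead fix once and for all a directed spanning out-tree and a family of forward return paths, obtain a canonical dynamical cycle $\gamma_e = P_{v_i}\,e\,Q_{v_j}$ for each edge $e$, and induct on the raw count $b(L)$ of backward-edge occurrences, peeling them off one at a time via the rewriting $L = \alpha\,\gamma_e^{-1}\,\beta$. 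Both arguments terminate for the same reason, and the base cases agree ($b(L)=0$ is a dynamical cycle, or the trivial loop with $\chi_\rho=2\in\Integral$). Your version avoids the preprocessing step at the cost of a slightly longer recursion (one application of the trace identity per backward edge rather than per backward block) and the auxiliary spanning-tree data, and the bookkeeping claim that $\beta\alpha$ and $\beta\alpha\gamma_e$ each carry exactly $b(L)-1$ backward occurrences checks out exactly as you say; this is a valid, arguably cleaner, variant of the paper's induction.
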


\begin{proof}
	Take a Markov partition $\mathcal{R}$ of $S$ with respect to $f$.
	We obtain the transition graph $T=T_{f,\mathcal{R}}$,
	whose dynamical cycles encode the periodic trajectories of $M_f$
	in the sense of Proposition \ref{dc_pt_correspondence}.
	
	We denote by $V=V(T)$ the set of vertices,
	and by $E=E(T)$ the set of directed edges.
	For any directed edge $e\in E$, denote by $\bar{e}$ the orientation-reversal of $e$.
	This operation also switches the roles of the initial and the terminal vertices.
	Denote by $\bar{E}$ the set of the reversed edges.
	A \emph{combinatorial path} in $T$ refers to 
	the concatenation of a finite sequence of	
	consecutive directed edges or reversed edges.
	We denote a combinatorial path as $\alpha=e_1e_2\cdots e_n$,
	where $e_i\in E\cup\bar{E}$,
	(so the initial and the terminal vertices of $\alpha$ 
	are the initial vertex of $e_1$
	and the terminal vertex of $e_n$, respectively).
	There is a reversed combinatorial path
	$\bar{\alpha}=\bar{e}_n\cdots \bar{e}_2\bar{e}_1$.
	We call $\alpha$ a \emph{dynamical path} if $e_1,\cdots,e_n\in E$.
	When the terminal vertex of a combinatorial path $\alpha$
	coincides with the initial vertex of another $\beta$,
	there is a concatenated combinatorial path $\alpha\beta$.
	When the initial vertex of a combinatorial path $\gamma$ 
	coincides with the terminal vertex,
	$\gamma$ is closed,
	then it makes sense to evaluate $\chi_\rho$ 
	on $\gamma$, 
	treating $\gamma$ as a free-homotopy loop in $T$ and
	using the homotopically distinguished map $T\to M_f$.
	The value of $\chi_\rho$ is invariant under 
	cyclical permutations of the edge sequence of $\gamma$ and the reversion of $\gamma$,
	by the conjugacy invariance and the inversion invariance
	of the trace on $\mathrm{SL}(2,\Complex)$.
	Moreover,
	any loop in $T$ can be freely homotoped to a closed combinatorial path in $T$,
	(up to parametrization).
	
	%
	
	We claim that any combinatorial closed path $\gamma$ in $T$
	is freely homotopic to another one of the concatenated form
	$$\xi_0\bar{\eta}_1\xi_1\cdots\bar{\eta}_n\xi_n,$$
	or possibly without $\xi_0$ or $\xi_n$,
	such that $\xi_i,\eta_i$ are all closed dynamical paths.
	If $\gamma$ has an assigned base vertex $v_0$,
	one may also require $\xi_i$ and $\eta_i$ all based at $v_0$.
	This basically follows from the irreducibility of the directed graph $T$
	(Proposition \ref{dc_pt_correspondence}).
	In fact, $\gamma$ can be written as a unique concatenated form
	$\alpha_0\bar{\beta}_1\alpha_1\cdots\bar{\beta}_n\alpha_n$,
	or possibly without $\alpha_0$ or $\alpha_n$,
	such that $\alpha_i,\beta_i$ are all dynamical paths.
	For each $\bar{\beta}=\bar{\beta}_j$,
	there is a dynamical cycle passing through 
	the initial vertex $v$ and the terminal vertex $w$
	of $\bar{\beta}$, since $T$ is irreducible.
	Then there is a dynamical path $\delta$ from $v$ to $w$.
	Replace the term $\bar{\beta}$ in $\gamma$ with
	$\delta\bar{\delta}\bar{\beta}=\delta\overline{(\beta\delta)}$.
	The modified part becomes a dynamical path $\delta$ concatenated
	with a reversed closed dynamical path $\overline{(\beta\delta)}$.
	Modifying all $\beta_j$ in $\gamma$ this way,
	we end up getting a new concatenated form 
	$\alpha_0\bar{\beta}_1\alpha_1\cdots\bar{\beta}_n\alpha_n,$
	based at $v_0$ and homotopic to $\gamma$ relative to $v_0$,
	such that every $\beta_j$ is closed,
	and hence $\alpha_0,\alpha_1,\cdots,\alpha_n$ are cyclically consecutive.
	Denote $\alpha_{[p,q]}=\alpha_p\alpha_{p+1}\cdots\alpha_q$.
	We take the closed dynamical paths 
	$\xi_0=\xi_1=\cdots=\xi_n=\alpha_{[0,n]}$,
	$\eta_j=\alpha_{[0,j-1]}\beta_j\alpha_{[j,n]}$,
	all based at $v_0$.
	Then the concatenated form 
	$\alpha_0\bar{\beta}_1\alpha_1\cdots\bar{\beta}_n\alpha_n$
	can be further modified into
	$\xi_0\bar{\eta}_1\xi_1\cdots\bar{\eta}_n\xi_n,$
	by inserting between $\alpha_{j-1}$ and $\bar{\beta}_j$ 
	the term $\alpha_{[j,n]}\overline{\alpha_{[j,n]}}$,
	and inserting between $\bar{\beta}_j$ and $\alpha_j$
	the term $\overline{\alpha_{[0,j-1]}}\alpha_{[0,j-1]}$.
	The resulting concatenated form
	$\xi_0\bar{\eta}_1\xi_1\cdots\bar{\eta}_n\xi_n$ 
	is as claimed.
		
	Since $T\to M_f$ is $\pi_1$--surjective (Proposition \ref{dc_pt_correspondence}),
	any conjugacy class in $\pi_1(M_f)$ admits a lift to $\pi_1(T)$,
	and can be represented as a closed combinatorial path $\gamma$.
	We may assume $\gamma$ takes the claimed form
	$\xi_0\bar{\eta}_1\xi_1\cdots\bar{\eta}_n\xi_n$,
	or possibly without $\xi_0$ or $\xi_n$, as above.
	We show that the value of $\chi_\rho$ at $\gamma$
	can be expressed as a $\Integral$--polynomial function of
	values of $\chi_\rho$ at dynamical cycles.
	This will immediately imply Lemma \ref{trace_field_pt},
	because the dynamical cycles all project periodic trajectories
	in $M_f$ up to free homotopy (Proposition \ref{dc_pt_correspondence}).
	
	We argue by induction on the number $n$ of reversed closed dynamical path terms.
	Possibly after a cyclic permutation,
	we may rewrite $\gamma$ as $\gamma'\bar{\eta}_n$,
	where $\gamma'=(\xi_n\xi_0)\bar{\eta}_1\xi_1\cdots\bar{\eta}_{n-1}\xi_{n-1}$.
	By trace relations (Proposition \ref{trace_relations}),
	we obtain
	$$\chi_\rho(\gamma)=
	\chi_\rho(\gamma'\bar{\eta}_n)=\chi_\rho(\gamma')\cdot\chi_\rho(\bar{\eta}_n)-\chi_\rho(\gamma'\eta_n)
	=\chi_\rho(\gamma')\cdot\chi_\rho(\eta_n)-\chi_\rho(\gamma'\eta_n).$$
	Note that $\gamma'$ and $\gamma'\eta_n$ 
	are both in the claimed form with $n-1$
	reversed closed dynamical cycle terms.
	By induction, $\chi_\rho(\gamma')$ and $\chi_\rho(\gamma'\eta_n)$
	are both $\Integral$--polynomial functions
	of values of $\chi_\rho$ at dynamical cycles,
	so $\chi_\rho(\gamma)$ also has the same property.
	This completes the induction.
\end{proof}

We also obtain the following byproduct of independent interest.
It is probably known to experts.

\begin{corollary}\label{normal_generation_pt}
	For any pseudo-Anosov automorphism,
	the fundamental group of the mapping torus
	is normally generated by the periodic trajectories
	of the suspension flow.
	In other words, 
	the union of the conjugacy classes represented by the periodic trajectories
	forms a generating set.
\end{corollary}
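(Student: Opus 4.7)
The plan is to recycle the combinatorial reduction from the proof of Lemma \ref{trace_field_pt} and add one elementary observation. I would fix a Markov partition $\mathcal{R}$ and let $T = T_{f,\mathcal{R}}$ be the transition graph together with its $\pi_1$-surjective map $T \to M_f$ from Proposition \ref{dc_pt_correspondence}. Picking a basepoint $v_0 \in T$ with image $p_0 \in M_f$, every conjugacy class in $\pi_1(M_f)$ is represented by a closed combinatorial path based at $v_0$, which by the claim established in the proof of Lemma \ref{trace_field_pt} can be freely homotoped into the normal form
\[
\gamma = \xi_0 \bar{\eta}_1 \xi_1 \cdots \bar{\eta}_n \xi_n,
\]
where each $\xi_i$ and $\eta_i$ is a dynamical path and each $\eta_i$ is closed.

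The new observation I would make is that, since both $\gamma$ and every $\bar{\eta}_i$ are closed, the concatenation $\alpha = \xi_0 \xi_1 \cdots \xi_n$ obtained by deleting the closed-loop factors is itself a closed combinatorial path consisting only of directed edges, and is therefore by definition a dynamical cycle. Setting $\alpha_i = \xi_0 \xi_1 \cdots \xi_{i-1}$, I would then exploit the identity
\[
[\gamma] = \bigl[\alpha_1 \bar{\eta}_1 \alpha_1^{-1}\bigr] \cdots \bigl[\alpha_n \bar{\eta}_n \alpha_n^{-1}\bigr] \cdot [\alpha]
\]
in $\pi_1(M_f, p_0)$. By Proposition \ref{dc_pt_correspondence}, each of $\alpha$ and $\eta_i$ projects to a periodic trajectory of the suspension flow, so $[\alpha]$ lies in a periodic-trajectory conjugacy class, while each $[\alpha_i \bar{\eta}_i \alpha_i^{-1}]$ is a conjugate of the inverse of a periodic-trajectory element.

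Since the normal closure of any subset of a group is automatically closed under conjugation and inversion, each factor on the right, and hence $[\gamma]$ itself, would lie in the normal closure of the periodic-trajectory elements. Because every conjugacy class in $\pi_1(M_f)$ arises as some such $[\gamma]$, this would give the corollary. I do not expect a serious obstacle: the combinatorial reduction is already carried out in Lemma \ref{trace_field_pt}, and the only genuinely new ingredient is the trivial remark that excising closed dynamical loops from a closed combinatorial path leaves a closed concatenation of directed edges, which qualifies as a dynamical cycle.
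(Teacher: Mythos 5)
Your proof is correct and takes essentially the same approach as the paper: both reuse the normal form $\gamma = \xi_0\bar{\eta}_1\xi_1\cdots\bar{\eta}_n\xi_n$ from the proof of Lemma \ref{trace_field_pt}, observe that the dynamical cycles $\eta_i$ and the reconstituted dynamical cycle $\xi_0\cdots\xi_n$ project to periodic trajectories, and then invoke the $\pi_1$--surjectivity of $T \to M_f$ from Proposition \ref{dc_pt_correspondence}. The only difference is presentational: the paper phrases the conclusion as ``attaching $2$--cells along all dynamical cycles in $T$ kills $\pi_1(T)$,'' whereas you write out the explicit factorization $[\gamma] = \prod_i [\alpha_i\bar{\eta}_i\alpha_i^{-1}]\cdot[\alpha]$, which makes the same point concretely.
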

%

\begin{proof}
	As shown in the proof Lemma \ref{trace_field_pt},
	any closed combinatorial path $\gamma$ in an irreducible directed graph $T$
	is freely homotopic to 
	a concatenation of finitely many closed dynamical paths 
	or their reversals $\xi_0\bar{\eta_1}\cdots\bar{\eta}_n\xi_n$. 
	It follows that attaching $2$--cells along all the dynamical cycles in $T$
	will result in a simply connected CW $2$--complex,
	or equivalently,
	the union of the conjugacy classes represented by the dynamical cycles
	forms a generating set of $\pi_1(T)$.
	Then the normal generation of $\pi_1(M_f)$ 
	by periodic trajectories follows from the $\pi_1$--surjectivity
	of the map $T\to M_f$ as provided in Proposition \ref{dc_pt_correspondence}.
\end{proof}

%
%
%

\begin{lemma}\label{quaternion_algebra_pt}
Let $S$ be an orientable connected surface of genus $\geq2$.
	Let $f\colon S\to S$ be a pseudo-Anosov automorphism.
	Suppose that $\rho\colon\pi_1(M_f)\to\mathrm{SL}(2,\Complex)$ 
	is an irreducible representation whose trace field is real.
	
	Then, the following dichotomy holds true.
	\begin{enumerate}
	\item If $\chi_\rho$ is uniformly bounded on the set of periodic trajectories,
	then the real complete quaternion algebra of $\rho$ is ramified.
	In this case, $\rho$ is conjugate to a representation in $\mathrm{SU}(2)$.
	\item If $\chi_\rho$ is unbounded on the set of periodic trajectories,
	then the real complete quaternion algebra of $\rho$ is unramified.
	In this case, $\rho$ is conjugate to a representation in $\mathrm{SL}(2,\Real)$.
	\end{enumerate}
\end{lemma}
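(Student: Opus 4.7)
The proof strategy splits into two largely independent parts.

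\emph{Part (A): identifying the conjugacy target from the ramification.} The quaternion algebra $\mathscr{A}$ of $\rho$ is defined over the real trace field $K$, and its real completion $\mathscr{A} \otimes_K \Real$ is either $\mathrm{Mat}(2,\Real)$ or Hamilton's quaternions $\mathcal{H}$, by the classification recalled in Example \ref{qa_example}. The embedding $\mathscr{A} \hookrightarrow \mathrm{Mat}(2,\Complex)$ extends to $\mathscr{A} \otimes_K \Real \hookrightarrow \mathrm{Mat}(2,\Complex)$, whose image is a $4$-dimensional $\Real$-subalgebra of $\mathrm{Mat}(2,\Complex)$. Up to conjugation by an element of $\mathrm{SL}(2,\Complex)$, this embedding is the standard one, so the norm-one subgroup of $\mathscr{A} \otimes_K \Real$---which contains $\rho(\pi_1(M_f))$---conjugates into $\mathrm{SL}(2,\Real)$ in the unramified case or into $\mathrm{SU}(2)$ in the ramified case.

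\emph{Part (B): trace dichotomy on periodic trajectories.} One direction is immediate: if $\rho$ conjugates into the compact group $\mathrm{SU}(2)$, then $|\chi_\rho(g)| \leq 2$ for every $g \in \pi_1(M_f)$, so $\chi_\rho$ is bounded on periodic trajectories. For the converse, suppose $\rho$ conjugates into $\mathrm{SL}(2,\Real)$ and is irreducible there; the goal is to show $\chi_\rho$ is unbounded on periodic trajectories. The key reduction is that the set of periodic trajectories is closed under taking positive powers, and for $A \in \mathrm{SL}(2,\Real)$ with eigenvalues $\lambda, \lambda^{-1}$, the trace sequence $\mathrm{tr}(A^n) = \lambda^n + \lambda^{-n}$ is unbounded in $n$ precisely when $|\lambda| \neq 1$, i.e., when $A$ is hyperbolic. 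So it suffices to exhibit a single periodic trajectory $\ell$ with $\rho(\ell)$ hyperbolic.

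Suppose for contradiction that every periodic trajectory has non-hyperbolic image (elliptic, parabolic, or $\pm I$). Fix a Markov partition $\mathcal{R}$ for $f$, and work with the transition graph $T = T_{f,\mathcal{R}}$. For any vertex $v$ of $T$, the dynamical cycles based at $v$ form a sub-semigroup of $\pi_1(T,v)$ whose image under $\rho$ is a sub-semigroup $\Sigma_v \subset \mathrm{SL}(2,\Real)$ consisting entirely of non-hyperbolic elements. By classical hyperbolic geometry---the product of two rotations with distinct centers, or of non-commuting elliptic and parabolic elements with distinct fixed points, is generically hyperbolic---$\Sigma_v$ must fix a common point $x_v \in \Hyp^2 \cup \partial\Hyp^2$, hence lie in a conjugate of $\mathrm{SO}(2)$ or of a Borel subgroup of $\mathrm{SL}(2,\Real)$; either way, a reducible subgroup of $\mathrm{SL}(2,\Complex)$. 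The main obstacle is promoting this reducibility of each $\Sigma_v$ to reducibility of the full image $\rho(\pi_1(M_f))$, thereby contradicting the irreducibility hypothesis. My plan is to use the combinatorial decomposition from the proof of Lemma \ref{trace_field_pt}: every combinatorial loop in $T$ is freely homotopic to a concatenation $\xi_0 \bar\eta_1 \xi_1 \cdots \bar\eta_n \xi_n$ of dynamical paths $\xi_i$ and reversed closed dynamical paths $\bar\eta_i$. Since each $\rho(\eta_i)$ lies in a specific point stabilizer, and the intervening $\rho(\xi_i)$ transport these stabilizers to one another in a constrained way, one hopes to track equivariantly the fixed-point data $\{x_v\}$ across the finitely many vertices of $T$ and conclude that $\rho(\pi_1(M_f))$ preserves a common invariant set in $\Hyp^2 \cup \partial \Hyp^2$, yielding the desired contradiction. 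Making this fixed-point-clustering argument watertight is the technical crux of the proof.
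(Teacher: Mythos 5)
Your Part (A) and the first direction of Part (B) match the paper. The genuine gap is exactly where you flag it: the ``fixed-point-clustering'' step is not just a technical nuisance to tidy up but the entire content of the proof, and your sketch for it does not go through as stated. Two specific problems. First, it is not clear that a \emph{semigroup} $\Sigma_v$ of non-hyperbolic elements of $\mathrm{SL}(2,\Real)$ must fix a common point of $\overline{\Hyp}^2$; the facts you invoke about products of rotations with distinct centers say only that certain products \emph{can} be hyperbolic, not that non-hyperbolicity of the whole semigroup forces a common fixed point. The elliptic--elliptic case in particular requires knowing that the rotation angles are irrational multiples of $\pi$, which is why the paper first reduces to the case where $\mathrm{Im}(\rho)$ is torsion-free (via Selberg's lemma, passing to a covering pseudo-Anosov mapping torus) -- a reduction your proposal omits and without which the ``generically hyperbolic'' heuristic fails for finite-order elliptics. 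Second, even granting each $\Sigma_v$ is reducible, you would still need to transport fixed-point data along non-dynamical edges to contradict irreducibility of $\rho$ on all of $\pi_1(M_f)$, and you give no mechanism for this.

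The paper takes a different, constructive route that avoids the contradiction framework entirely. Starting from irreducibility of $\rho$ on $\pi_1(T,v)$, it produces two closed \emph{combinatorial} paths $\mu,\nu$ based at $v$ on which $\rho_T$ is still irreducible, then runs an induction on the number of reversed-dynamical-cycle terms: writing $\mu=\mu'\bar\eta_m\xi_m$, it shows $\rho_T$ remains irreducible on at least one of $\langle\mu'\eta_m\xi_m,\nu\rangle$, $\langle\mu'\xi_m,\nu\rangle$, or $\langle\mu'\eta_m\xi_m,\mu'\xi_m\rangle$, each with strictly fewer reversed terms. This yields two closed \emph{dynamical} paths $\alpha,\beta$ with $\rho_T$ irreducible on $\langle\alpha,\beta\rangle$, and then a direct three-case analysis (one of $\sigma_\alpha,\sigma_\beta$ hyperbolic; one parabolic; both elliptic) shows $\sigma_\alpha^r\sigma_\beta^s$ is hyperbolic for suitable $r,s\geq 0$. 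Since concatenations of dynamical cycles at $v$ are again dynamical cycles, hence project to periodic trajectories, this finishes the claim. If you want to rescue your approach, you would need to (i) add the torsion-free reduction, (ii) prove the semigroup-reducibility claim, and (iii) supply the transport argument across reversed edges -- at which point you have essentially re-derived the paper's induction in a less efficient form.
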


\begin{proof}
	The real complete quaternion algebra is isomorphic to either
	Hamilton's quaternion algebra
	$\mathcal{H}$ or 
	the real $2\times2$--matrix algebra $\mathrm{Mat}(2,\Real)$, (see Example \ref{qa_example}).
	The elements of norm $1$ form a real Lie group,
	isomorphic to $\mathrm{SU}(2)$ or $\mathrm{SL}(2,\Real)$, respectively.
	The natural inclusion of the image of $\rho$ 
	into the real complete quaternion algebra
	can be regarded as a representation of $\pi$ in $\mathrm{SU}(2)$ or $\mathrm{SL}(2,\Real)$,
	which has the same character as $\chi_\rho$.
	Therefore, the conjugacy characterizations in the dichotomy
	follow from the well-known fact
	that irreducible $\mathrm{SL}(2,\Complex)$--representations
	are uniquely determined by their characters up to conjugacy
	\cite[Proposition 1.5.2]{CS_rep_var}.
	
	Recall any matrix $A\in\mathrm{SL}(2,\Real)$ other than $\pm I$
	falls into one of three types, 
	according to the sign of its discriminant $\mathrm{tr}(A)^2-4$:
	elliptic if negative, parabolic if zero, or hyperbolic if positive.
	In the hyperbolic case,
	$|\mathrm{tr}(A^n)|$ grows exponentially
	as $n$ tends to $\infty$.
	On the other hand,
	$|\mathrm{tr}(A)|\leq 2$ holds for any matrix $A\in\mathrm{SU}(2)$. 
	With these facts,
	the proof of Lemma \ref{quaternion_algebra_pt}
	essentially reduces to the following claim:
	\emph{
	For any representation $\rho\colon\pi_1(M_f)\to \mathrm{SL}(2,\Real)$
	that is irreducible in $\mathrm{SL}(2,\Complex)$,
	there exists some periodic trajectory 
	which is, up to conjugacy, represented by a matrix of hyperbolic type.
	}
	
	To prove the claim,
	it suffices to assume that the image of $\rho$ is torsion-free.
	In fact, if $G=\mathrm{Im}(\rho)$ contains nontrivial torsion,
	there is a torsion-free finite-index normal subgroup $G'$ of $G$,
	by Selberg's lemma \cite[Chapter 7, \S 7.6, Corollary 4]{Ratcliffe_book}.
	The preimage of $G'$ in $\pi=\pi_1(M_f)$ is a finite-index normal subgroup
	$\pi'=\pi_1(M')$, 
	which corresponds to a connected regular finite cover $M'\to M_f$.
	We can identify $M'$ as the mapping torus $M_{f'}$
	of a pseudo-Anosov automorphism $f'\colon S'\to S'$,
	where $S'$ is a preimage component in $M'$ of the distingushed fiber $S$ in $M_f$,
	and where $f'$ is the return map of the lifted suspension flow on $M'$.
	The restriction of $\rho$ to $\pi'$ becomes a representation 
	$\rho'\colon\pi_1(M_{f'})\to\mathrm{SL}(2,\Real)$.
	As $\rho$ is irreducible in $\mathrm{SL}(2,\Complex)$,
	$\rho'$ is also irreducible in $\mathrm{SL}(2,\Complex)$.
	(This follows from the fact that a subgroup in $\mathrm{SL}(2,\Real)$
	has a fixed point in $\Complex\cup\{\infty\}$ if and only if
	it has a finite orbit in $\Complex\cup\{\infty\}$.)
	Therefore, we can deduce the general case from the torsion-free case
	by arguing with $f'\colon S'\to S'$ and $\rho'$.
	
	Below we suppose that $\rho$ is torsion-free.	
	Take a Markov partition $\mathcal{R}$ of $S$ with respect to $f$.
	We obtain the transition graph $T=T_{f,\mathcal{R}}$,
	whose dynamical cycles encode the periodic trajectories of $M_f$
	in the sense of Proposition \ref{dc_pt_correspondence}.
	We adopt the notations and terminology as in the proof of Lemma \ref{trace_field_pt}.
	Recall that $V=V(T)$ denotes the set of vertices,
	and $E=E(T)$ the set of directed edges.
	To speak of fundamental groups without ambiguity,
	we fix basepoints of $T$ and $M_f$ as follows.
	Take a vertex $v\in V$ as the basepoint of $T$. 
	Fix a map $T\to M_f$ among its distinguished homotopy class (Proposition \ref{dc_pt_correspondence}),
	and take the image $*$ of $v$ in $M_f$ as the basepoint of $M_f$.
	Denote by $\rho_T\colon \pi_1(T,v)\to \pi_1(M_f,*)\to \mathrm{SL}(2,\Real)$ 
	the pull-back representation of $\rho$.	
	
	We start by
	observing that there exists a pair of closed combinatorial paths $\mu$ and $\nu$,
	both based at $v$, such that $\rho_T$ is $\mathrm{SL}(2,\Complex)$--irreducible
	restricted to the subgroup $\langle \mu,\nu\rangle$ of $\pi_1(T,v)$.
	This is	because $\rho_T$ is $\mathrm{SL}(2,\Complex)$--irreducible on $\pi_1(T,v)$.
	By inserting terms of the form $\delta\bar{\delta}$,
	these combinatorial paths can be modified	into concatenation forms
	$$
	\begin{array}{cc}
	\mu=\xi_0\bar{\eta}_1\xi_1\cdots\bar{\eta}_m\xi_m, & \nu=\psi_0\bar{\varphi}_1\psi_1\cdots\bar{\varphi}_n\psi_n,
	\end{array}
	$$
	possibly without $\xi_0$, $\xi_m$, $\psi_0$, or $\psi_n$,
	such that $\xi_i,\psi_j$ are all dynamical paths,
	and $\eta_i,\varphi_j$ are all closed dyanmical paths.
	This is the same trick as explained in the proof of Lemma \ref{trace_field_pt},
	which relies on the irreducibility of the directed graph $T$ (Proposition \ref{dc_pt_correspondence}).
	(One may also make $\xi_i$ and $\psi_j$ closed,
	although our subsequent argument does not require it.)
	In particular, the modification does not change the homotopy class of the closed paths
	relative to the base point $v$.	
	
	We show that there exists a pair of closed dynamical paths $\alpha$ and $\beta$,
	both based at $v$, such that $\rho_T$ is $\mathrm{SL}(2,\Complex)$--irreducible
	restricted to the subgroup $\langle \alpha,\beta\rangle$ of $\pi_1(T,v)$.
	This is done based on the existence 
	of closed combinatorial paths $\mu$ and $\nu$ as above,
	(already modified into specified concatenation forms).
	We argue by induction on the complexity $\max(m,n)$.
	In the induction step, we pass to other simpler $(\mu,\nu)$
	keeping the $\mathrm{SL}(2,\Complex)$--irreducibility of $\rho_T$ on $\langle\mu,\nu\rangle$.

	When $\max(m,n)=0$, we can simply take $\alpha=\mu$ and $\beta=\nu$.
	Suppose that $\alpha$ and $\beta$ can be constructed 
	when $\max(m,n)=l$, for some nonnegative integer $l$.
	When $\max(m,n)=l+1$, we may assume $m=l+1$, without loss of generality.
	Write $\mu=\mu'\bar{\eta}_m\xi_m$, where $\mu'=\xi_0\bar{\eta}_1\xi_1\cdots\bar{\eta}_{m-1}\xi_{m-1}$.
	
	We observe that $\rho_T$ is $\mathrm{SL}(2,\Complex)$--irreducible
	either on $\langle \mu'\eta_m\xi_m,\mu'\xi_m\rangle$
	or on $\langle \mu'\eta_m^k\xi_m,\nu\rangle$ for some integer $k\geq0$.
	In fact, $\rho_T(\nu)\in\mathrm{SL}(2,\Real)$ must be noncentral,
	since $\rho_T$ is irreducible on $\langle \mu,\nu\rangle$.
	Hence, $\rho_T(\nu)$ has only one or two fixed points on $\Complex\cup\{\infty\}$.
	Suppose $\rho_T$ is $\mathrm{SL}(2,\Complex)$--reducible 
	on both 
	$\langle \mu'\xi_m,\nu\rangle$ and $\langle \mu'\eta_m\xi_m,\nu\rangle$,
	then both $\rho_T(\mu'\xi_m)$ and $\rho_T(\mu'\eta_m\xi_m)$
	fix some fixed point of $\rho_T(\nu)$.
	If they	fix 
	a common fixed point $p\in\Complex\cup\{\infty\}$ of $\rho_T(\nu)$, 
	then $p$ is also fixed by
	$$\rho_T(\mu)=\rho_T(\mu'\bar{\eta}_m\xi_m)
	=\rho_T\left((\mu'\xi_m)\overline{(\mu'\eta_m\xi_m)}(\mu'\xi_m)\right)
	=\rho_T(\mu'\xi_m)\cdot\rho_T(\mu'\eta_m\xi_m)^{-1}\cdot\rho_T(\mu'\xi_m).$$
	However, this would contradict the assumed $\mathrm{SL}(2,\Complex)$--irreducibility
	of $\rho_T$ on $\langle \mu,\nu\rangle$.
	Therefore,
	if $\rho_T$ is $\mathrm{SL}(2,\Complex)$--reducible 
	on both 
	$\langle \mu'\eta_m\xi_m,\nu\rangle$ and $\langle \mu'\xi_m,\nu\rangle$,
	we infer that $\rho_T(\mu'\xi_m)$ and $\rho_T(\mu'\eta_m\xi_m)$ 
	are hyperbolic or parabolic with fixed point sets
	neither containing the other.
	In particular, neither of them is central.
	If their fixed point sets are disjoint from each other,
	$\rho_T$ is $\mathrm{SL}(2,\Complex)$--irreducible on $\langle \mu'\eta_m\xi_m,\mu'\xi_m\rangle$.
	Otherwise, the only possibility is that $\rho_T(\mu'\xi_m)$ and $\rho_T(\mu'\eta_m\xi_m)$ 
	are both hyperbolic, each fixing a distinct fixed point of $\rho_T(\nu)$,
	and having another common fixed point $q$.
	In this case, $\rho_T(\bar{\xi}_m\eta_m\xi)=\rho_T(\mu'\xi_m)^{-1}\cdot\rho_T(\mu'\eta_m\xi_m)$
	fixes $q$, but does not commute with either $\rho_T(\mu'\xi_m)$ or $\rho_T(\mu'\eta_m\xi_m)$.
	Therefore, the fixed point set of $\rho_T(\bar{\xi}_m\eta_m\xi_m)$ is 
	disjoint from the fixed point set of $\rho_T(\nu)$.
	It follows that for any sufficiently large integer $k>0$,
	$\rho_T(\mu'\eta_m^k\xi_m)=
	\rho_T(\mu'\xi_m)\cdot\rho_T(\bar{\xi}_m\eta_m\xi_m)^k$
	does not fix any fixed point of $\rho_T(\nu)$.
	We conclude that $\langle \mu'\eta_m^k\xi_m,\nu\rangle$ is
	$\mathrm{SL}(2,\Complex)$--irreducible in this case.	
	
	With the above observation,
	we replace the pair $(\mu,\nu)$
	with either $(\mu'\eta_m\xi_m,\mu'\xi_m)$ or some $(\mu'\eta_m^k\xi_m,\nu)$, 
	denoted as $(\tilde{\mu},\tilde{\nu})$,
	such that $\rho_T$ is $\mathrm{SL}(2,\Complex)$--irreducible
	on $\langle \tilde{\mu},\tilde{\nu}\rangle$.
	Passing to $(\tilde{\mu},\tilde{\nu})$
	causes the numbers of reversed dynamical cycles $(m,n)$ 
	changing into $(m-1,m-1)$ or $(m-1,n)$, respectively.
	If $n$ is smaller than $m=l+1$, we have decreased $\max(m,n)$ by $1$,
	so we can construct the asserted $\alpha$ and $\beta$, by induction.
	If $n$ equals $m=l+1$,
	we need to redo the above construction
	with respect to $(\tilde{\mu},\tilde{\nu})$,
	working on $\tilde{\nu}$.
	This will decrease $\max(m,n)$ by $1$,
	so we can still construct $\alpha$ and $\beta$.
	
	To finish the proof,
	let $\alpha$ and $\beta$ be a pair of closed dynamical paths in $T$ based at $v$,
	such that $\rho_T\colon \pi_1(T,v)\to\mathrm{SL}(2,\Real)$ 
	is $\mathrm{SL}(2,\Complex)$--irreducible on $\langle \alpha,\beta\rangle$.
	We show that $\rho_T(\alpha^r\beta^s)$ is hyperbolic for some 
	nonnegative integers $r$ and $s$.
	This will complete the proof since the dynamical cycle $\alpha^r\beta^s$ in $T$
	projects a periodic trajectory in $M_f$ 
	as claimed (Proposition \ref{dc_pt_correspondence}).
	
	When $\sigma_\alpha=\rho_T(\alpha)$ or $\sigma_\beta=\rho_T(\beta)$ is hyperbolic,
	we can take $(r,s)$ as $(1,0)$ or $(0,1)$, respectively.
	
	When $\sigma_\alpha$ is parabolic,
	it fixes some point $p\in\Real\cup\{\infty\}$.
	As $\rho_T$ is $\mathrm{SL}(2,\Complex)$--irreducible on $\langle\alpha,\beta\rangle$,
	the point $p$ is distinct from $\sigma_\beta^{-1}(p)$.
	Take a small compact arc $J\in\Real\cup\{\infty\}$
	containing $p$ in the interior,
	such that $J$ and $\sigma_\beta^{-1}(J)$ are disjoint.
	Then for some sufficiently large $r$,
	we can make $\sigma_\alpha^r\sigma_\beta(J)$ contained in $J$,
	and
	$\sigma_\beta^{-1}\sigma_\alpha^{-r}\sigma_\beta^{-1}(J)$ contained in $\sigma_\beta^{-1}(J)$.
	In particular, $\sigma_\alpha^r\sigma_\beta$
	has two distinct fixed points on $\Real\cup\{\infty\}$,
	so it is hyperbolic.
	We can take $(r,s)$ as $(r,1)$ for some large $r$.
	Similarly, 
	when $\sigma_\beta$ is parabolic,
	we can take $(r,s)$ as $(1,s)$ for some large $s$.
	
	When $\sigma_\alpha$ and $\sigma_\beta$ are both elliptic,
	they fix distinct points $p$ and $q$ in $\Hyp^2$, 
	since $\rho_T$ is $\mathrm{SL}(2,\Complex)$--irreducible on $\langle \alpha,\beta\rangle$.
	They act on $\Hyp^2$ as rotations about those points, in view of hyperbolic geometry.
	Note that composing the $\pi$--rotation about $p$ 
	with the $\pi$--rotation about $q$ yields a hyperbolic translation on $\Hyp^2$
	along the axis through $p$ and $q$.
	As we have assumed $\rho_T$ torsion-free,
	the rotation angles of $\sigma_\alpha$ and $\sigma_\beta$ are irrational multiples of $\pi$.
	There are positive integers $r$ and $s$,
	such that the rotations angles of $\sigma_\alpha^r$ and $\sigma_\beta^s$
	are very close to $\pi$.
	This way, $\sigma_\alpha^r\sigma_\beta^s$ is hyperbolic.
	We can take any such $(r,s)$ as claimed.
\end{proof}

\section{Profinite correspondence of periodic trajectories}\label{Sec-profin_corresp_traj}
In this section, we explain a bijective correspondence between the periodic trajectories
in pseudo-Anosov mapping tori
with profinitely isomorphic fundamental groups.
This correspondence has been established \cite{Liu_profinite_almost_rigidity},
although it is not stated explicitly therein.
Therefore, the purpose of this section is largely expository,
and we extract a precise form of the correspondence (Lemma \ref{profinite_correspondence_pt})
from results in \cite{Liu_profinite_almost_rigidity}.
We draw another useful consequence  (Lemma \ref{Mu_cube}) 
from a main result of \cite{Liu_profinite_almost_rigidity},
which appears for the first time.

Let $M_A$ and $M_B$ be a pair of closed hyperbolic $3$--manifolds.
If $\Psi\colon \widehat{\pi}_A\to \widehat{\pi}_B$ is an isomorphism
between the profinite completions of their fundamental groups 
$\pi_A=\pi_1(M_A)$ and $\pi_B=\pi_1(M_B)$,
then for any surface fiber $S_A$ of $M_A$,
there is a unique surface fiber $S_B$ of $M_B$,
such that the closure of the normal subgroup $\Sigma_A=\pi_1(S_A)$
in $\widehat{\pi}_A$ projects isomorphically onto
the closure of $\Sigma_B=\pi_1(S_B)$ in $\widehat{\pi}_B$, under $\Psi$.
Note that the closures of $\Sigma_A$ and $\Sigma_B$ can be identified with
their own profinite completions $\widehat{\Sigma}_A$ and $\widehat{\Sigma}_B$, respectively.

Fix transverse orientations of the surface fibers in the $3$--manifolds.
Then $M_A$ can be identified homeomorphically with the mapping torus of a
pseudo-Anosov automorphism $f_A\colon S_A\to S_A$, 
which is isotopically unique,
and which has a distinguished cohomology class $\phi_A\in H^1(M_A;\Integral)$
dual to $S_A$.
Similarly, we obtain $f_B\colon S_B\to S_B$ and 
$\phi_B\in H^1(M_B;\Integral)$ associated with $M_B$.
There is a unique invertible profinite integer $\mu\in\widehat{\Integral}^\times$,
(according to the fiber orientation),
such that $\mu$ fits into the following commutative diagram of profinite groups:
\begin{equation}\label{Sigma_mu_diagram}
\xymatrix{
\{1\} \ar[r] & \widehat{\Sigma}_A \ar[rr] \ar[d]_{\cong}^{\Psi|} 
& & \widehat{\pi}_A \ar[rr]^{t^{\widehat{\phi}_A}} \ar[d]_{\cong}^{\Psi}
& & t^{\widehat{\Integral}} \ar[r] \ar[d]^{\mathrm{pwr}_\mu}_{\cong} & \{1\} \\
\{1\} \ar[r] & \widehat{\Sigma}_B \ar[rr] & & \widehat{\pi}_B \ar[rr]^{t^{\widehat{\phi}_B}}
& & t^{\widehat{\Integral}} \ar[r] & \{1\}
}
\end{equation}
where $t^{\widehat{\Integral}}$ denotes the profinite completion of 
the infinite cyclic group $\langle t\rangle=t^{\Integral}$, 
and where we define $\mathrm{pwr}_\mu\colon t^{\nu}\mapsto t^{\mu\nu}$
and $t^{\widehat{\phi}}\colon g\mapsto t^{\widehat{\phi}(g)}$.

\begin{convention}\label{profinite_isomorphism_setting_pA}
	By declaring $(M_A,M_B,\Psi)$ as 
	a \emph{profinite isomorphism setting of pseudo-Anosov mapping tori},
	we assume that $M_A$ and $M_B$ are a pair of closed fibered hyperbolic $3$--manifolds,
	with fixed universal covering spaces 
	and fixed transversely oriented connected surface fibers,
	and 
	we assume that $\Psi\colon\widehat{\pi}_A\to\widehat{\pi}_B$
	is an isomorphism between the profinite completions of the universal deck transformation groups,
	fitting into the diagram (\ref{Sigma_mu_diagram}).
	We adopt the notations as involved in the diagram (\ref{Sigma_mu_diagram}),
	such as $\widehat{\phi}_A$, $\widehat{\phi}_B$, and $\mu$.
\end{convention}

If $\pi$ is a finitely generated residually finite group 
with profinite completion $\widehat{\pi}$,
any conjugacy class $\mathbf{c}$ in $\pi$ determines a unique conjugacy class
$\hat{\mathbf{c}}$ in $\widehat{\pi}$, 
which is the closure of $\mathbf{c}$.
(Different $\mathbf{c}$ determines different $\hat{\mathbf{c}}$
if and only if $\pi$ is conjugacy separable.)
For any $n\in\Integral$, there is a well-defined conjugacy class $\mathbf{c}^n$
in $\pi$, represented by the $n$--th power of any representative of $\mathbf{c}$.
Similarly, for any $\nu\in\widehat{\Integral}$, there is a well-defined conjugacy class
$\hat{\mathbf{c}}^\nu$ in $\widehat{\pi}$.
This can be defined as the inverse limit of $\mathbf{c}_\Gamma^n$,
where $\mathbf{c}_\Gamma$ denotes the residual conjugacy class of $\mathbf{c}$ in 
any finite quotient group $\Gamma$ of $\pi$,
and where $n\in\Integral$ is congruent to $\nu$ modulo $|\Gamma|$.

\begin{lemma}\label{profinite_correspondence_pt}
Under any profinite isomorphism setting of pseudo-Anosov mapping tori
$(M_A,M_B,\Psi)$ (Convention \ref{profinite_isomorphism_setting_pA}),
$\Psi$
witnesses a bijective correspondence between the periodic trajectories 
in $M_A$ and in $M_B$ of their suspension flows, as follows.

For any periodic trajactory $\mathbf{c}_A$ of $M_A$,
there exists a unique periodic trajectory $\mathbf{c}_B$ of $M_B$,
such that $\hat{\mathbf{c}}_A$ projects $\hat{\mathbf{c}}_B^\mu$
under $\Psi$.
Here, periodic trajectories are treated
as conjugacy classes in the fundamental group.
Hence, $\hat{\mathbf{c}}_B$ projects $\hat{\mathbf{c}}_A^{1/\mu}$
under the inverse of $\Psi$.
\end{lemma}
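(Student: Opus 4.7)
The plan is to extract the correspondence from results in \cite{Liu_profinite_almost_rigidity} and reorganize them into the asserted form. The basic framework is already provided by the commutative diagram (\ref{Sigma_mu_diagram}): $\Psi$ restricts to an isomorphism $\widehat{\Sigma}_A\to\widehat{\Sigma}_B$, and the projections to $t^{\widehat{\Integral}}$ are intertwined by $\mathrm{pwr}_\mu$, so that $\widehat{\phi}_B\circ\Psi=\mu\cdot\widehat{\phi}_A$.

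I would first encode periodic trajectories in group-theoretic terms: via the semidirect product $\pi_A\cong\Sigma_A\rtimes_{f_{A*}}\langle t_A\rangle$ with $\phi_A(t_A)=1$, conjugacy classes in $\pi_A$ of $\phi_A$-value $m>0$ correspond to $f_{A*}^m$-twisted conjugacy classes in $\Sigma_A$ (further modded out by the $\langle f_{A*}\rangle$-action); for pseudo-Anosov $f_A$, each such class is an essential Nielsen fixed-point class of $f_A^m$, corresponding to an $f_A$-orbit of $m$-periodic points, that is, to a periodic trajectory of period $m$. The analogous description applies on the profinite level and for $M_B$. With this dictionary in hand, I would invoke the main dynamical correspondence established in \cite{Liu_profinite_almost_rigidity}: for each $m\in\Natural$, $\Psi|_{\widehat{\Sigma}_A}$ intertwines (after $\mu$-reparametrization) the outer action of $\widehat{f}_{A*}^{m}$ with that of $\widehat{f}_{B*}^{m'}$ where $m'=\mu^2 m$, inducing a bijection of the associated essential Nielsen classes; each class in $\widehat{\Sigma}_B$ so produced is the profinite closure of a unique conjugacy class in $\Sigma_B$, which forces $m'\in\Natural$ and identifies a unique periodic trajectory $\mathbf{c}_B$ of $M_B$ of period $m'$. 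Tracing through the diagram (\ref{Sigma_mu_diagram}) and the identification of conjugacy classes with twisted $\Sigma$-conjugacy classes then yields $\Psi(\hat{\mathbf{c}}_A^\mu)=\hat{\mathbf{c}}_B$.

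Uniqueness of $\mathbf{c}_B$ follows from the conjugacy separability of hyperbolic $3$-manifold groups, which ensures that distinct conjugacy classes in $\pi_B$ have distinct profinite closures in $\widehat{\pi}_B$. The asserted reverse statement $\Psi^{-1}(\hat{\mathbf{c}}_B^{1/\mu})=\hat{\mathbf{c}}_A$ follows at once by applying $\Psi^{-1}$ and the $(1/\mu)$-th power operation to the main equation, since $\Psi$ is a group isomorphism. The principal obstacle in writing this up cleanly is the careful translation from the twisted-conjugacy and Nielsen-counting language of \cite{Liu_profinite_almost_rigidity} (where the correspondence is typically phrased via inverse limits of finite-level fixed-point data) into the direct algebraic assertion of Lemma \ref{profinite_correspondence_pt}; once that dictionary is in place both for $\pi_{A/B}$ and for their profinite completions, the bijective correspondence is essentially formal.
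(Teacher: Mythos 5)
Your plan is in the right spirit but it is incomplete at exactly the point where the proof is nontrivial, and it contains one concrete error.

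The error first: the period bookkeeping is off. The diagram (\ref{Sigma_mu_diagram}) gives $\widehat{\phi}_B\circ\Psi=\mu\cdot\widehat{\phi}_A$, and the argument in the paper shows that the $\widehat{\Integral}^\times$--equivalence exponent is pinned down uniquely by the fact that both $\phi_A(\mathbf{c}_A)$ and $\phi_B(\mathbf{c}_B)$ are positive integers; the resulting conclusion is $\phi_A(\mathbf{c}_A)=\phi_B(\mathbf{c}_B)$, i.e.\ corresponding trajectories have the \emph{same} period. Your proposed $m'=\mu^2 m$ is not a natural number unless $\mu^2=1$ --- but $\mu^2=1$ is not available at this stage (it is Lemma \ref{p_regularity}, proved two sections later via Lemma \ref{mu_square}, the virtual homological spectral radius result, and Mahler's $p$--adic Gelfond--Schneider theorem). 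So either your formula is a slip, or your plan covertly assumes the later result.

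The more substantive issue is the central step, which you state as: ``$\Psi|_{\widehat{\Sigma}_A}$ intertwines the (reparametrized) outer actions, inducing a bijection of essential Nielsen classes; each class in $\widehat{\Sigma}_B$ so produced is the profinite closure of a unique conjugacy class in $\Sigma_B$.'' That last clause is precisely the heart of the lemma, and you treat it as a black box attributed to \cite{Liu_profinite_almost_rigidity} without saying how it is extracted. The paper's proof does not work at the level of twisted conjugacy classes in the fiber group at all; it introduces $\widehat{\Integral}^\times$--equivalence classes of conjugacy classes in finite quotients, and proves the correspondence by combining three specific inputs from \cite{Liu_profinite_almost_rigidity}: (i) the profinite invariance of the twisted Lefschetz numbers $L_m(f;\gamma^*\chi_\omega)$ (with $\chi_\omega$ the characteristic function of a $\widehat{\Integral}^\times$--class), (ii) a separation lemma producing a finite quotient $\Gamma_m$ in which the $m$--periodic trajectories of $M_A$ land in pairwise distinct $\widehat{\Integral}^\times$--classes, and (iii) the profinite invariance of the Nielsen numbers $N_m$. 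The nonvanishing of the periodic indices $\mathrm{ind}_m$ then converts the Lefschetz number equality into a count-matching argument. Without identifying these ingredients, your sketch does not actually establish that the asserted $\mathbf{c}_B$ exists; ``tracing through the diagram'' does not do it, because the diagram only controls $\widehat{\phi}$--values, not whether a given profinite conjugacy class is the closure of an honest periodic trajectory.

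The conjugacy-separability argument you invoke for uniqueness is valid (it is, if anything, stronger than needed; the separation lemma already separates the finitely many $m$--periodic trajectories), so that part is fine.
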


\begin{proof}
	In any profinite group $\Pi$, 
	we say that two conjugacy classes $\mathbf{C}$ and $\mathbf{C}'$
	are \emph{$\widehat{\Integral}^\times$--equivalent}
	if $\mathbf{C}^{\mu'}=\mathbf{C}'$ holds
	for some $\mu'\in\widehat{\Integral}^\times$.
	Denote by $\mathrm{Orb}(\Pi)$ the set of conjugacy classes in $\Pi$,
	and denote by $\Omega(\Pi)$ the set of $\widehat{\Integral}^\times$--equivalence classes
	in $\mathrm{Orb}(\Pi)$.
	Note that if $\Gamma$ is a finite quotient group of $\Pi$,
	then every $\widehat{\Integral}^\times$--equivalence class in $\mathrm{Orb}(\Pi)$
	projects a unique $\widehat{\Integral}^\times$--equivalence class in $\mathrm{Orb}(\Gamma)$,
	so the quotient homomorphism $\Pi\to \Gamma$ naturally induces 
	a quotient map of sets $\Omega(\Pi)\to\Omega(\Gamma)$.
		
	One may state the condition in Lemma \ref{profinite_correspondence_pt} alternatively
	as 
	$\Psi(\hat{\mathbf{c}}_A)$ to be $\widehat{\Integral}^\times$--equivalent to $\hat{\mathbf{c}}_B$.
	In fact, if $\Psi(\hat{\mathbf{c}}_A)=\hat{\mathbf{c}}_B^{\mu'}$
	holds for some $\mu'\in\widehat{\Integral}^\times$,
	the following equation holds in $\widehat{\Integral}$
	by the commutative diagram (\ref{Sigma_mu_diagram}):
	$$\mu\cdot\phi_A(\mathbf{c}_A)=
	\mu\cdot\widehat{\phi}_A\left(\hat{\mathbf{c}}_A\right)=
	\widehat{\phi}_B\left(\Psi(\hat{\mathbf{c}}_A)\right)=
	\widehat{\phi}_B\left(\mu'\cdot\hat{\mathbf{c}}_B\right)=
	\mu'\cdot\widehat{\phi}_B\left(\hat{\mathbf{c}}_B\right)=
	\mu'\cdot\phi_B(\mathbf{c}_B).$$
	Note that $\phi_A(\mathbf{c}_A)$ and $\phi_B(\mathbf{c}_B)$
	are positive integers.
	Then
	$\phi_A(\mathbf{c}_A)=\phi_B(\mathbf{c}_B)$ holds in $\Integral$,
	and $\mu'=\mu$ holds in $\widehat{\Integral}^\times$.
	
	For any finite group $\Gamma$ and any $\widehat{\Integral}^\times$--equivalence class
	$\omega\in\Omega(\Gamma)$,
	we denote by $\chi_\omega$ 
	the characteristic function defined on $\mathrm{Orb}(\Gamma)$,
	such that
	$\chi_\omega(\mathbf{c})=1$ for all $\mathbf{c}\in\omega$,
	and $\chi_\omega(\mathbf{c})=0$ for all $\mathbf{c}\not\in\omega$.
	For any pseudo-Anosov automorphism $f\colon S\to S$ and 
	any homomorphism $\gamma\colon\pi_1(M_f)\to \Gamma$,
	we recall the following defining expression
	of the analogous twisted Lefschetz number,
	as in	\cite[Section 8, (8.7)]{Liu_profinite_almost_rigidity}:
	\begin{equation}\label{L_m_def}
	L_m(f;\gamma^*\chi_\omega)=
	\sum_{\mathbf{O}\in\mathrm{Orb}_m(f)} \chi_\omega(\gamma(\ell_m(f,\mathbf{O})))\cdot\mathrm{ind}_m(f;\mathbf{O}),
	\end{equation}
	(see (\ref{ell_m})).
	The factor $\mathrm{ind}_m(f;\mathbf{O})\in\Integral$ denotes the $m$--periodic index of $f$ at $\mathbf{O}$,
	(see \cite[Section 2, (2.4)]{Liu_profinite_almost_rigidity});
	for current purpose, it suffices to mention that $\mathrm{ind}_m(f;\mathbf{O})$
	is nonzero for every $\mathbf{O}\in\mathrm{Orb}_m(f)$.
	Note that the profinite isomorphism $\Psi$
	witnesses a bijective correspondence between
	the homomorphisms $\gamma_A\colon\pi_A\to\Gamma$ and 
	$\gamma_B\colon\pi_B\to\Gamma$,
	such that $\widehat{\gamma}_A\colon\widehat{\pi}_A\to\Gamma$ is the $\Psi$--pullback
	of $\widehat{\gamma}_B\colon\pi_B\to\Gamma$.	
	For 
	any $\omega\in\Omega(\Gamma)$, and $m\in\Natural$,
	and
	any $\Psi$--corresponding homomorphism pair $\gamma_A$ and $\gamma_B$ as above,
	the following version of
	profinite invariance of twisted Lefschetz numbers follows from
	\cite[Theorems 2.7, 7.2, and Lemma 8.4]{Liu_profinite_almost_rigidity}:
	\begin{equation}\label{profinite_invariance_L_m}
	L_m\left(f_A;\gamma_A^*\chi_\omega\right)=L_m\left(f_B;\gamma_B^*\chi_\omega\right).
	\end{equation}
	
	For any $m\in\Natural$,
	there exists some finite group $\Gamma_m$
	and some quotient homomorphism $\gamma_{m,A}\colon\pi_A\to \Gamma_m$,
	such that the $m$--periodic trajectories of $M_A$
	project into mutually distinct $\widehat{\Integral}^\times$--equivalence classes
	in $\mathrm{Orb}(\Gamma_m)$ under $\gamma_{m,A}$ \cite[Lemma 8.6]{Liu_profinite_almost_rigidity}.
	Then (\ref{L_m_def}) and (\ref{profinite_invariance_L_m})
	imply that those $\widehat{\Integral}^\times$--equivalence classes
	also receive $m$--periodic trajectories of $M_B$
	under the $\Psi$--corresponding $\gamma_{m,B}$.
	Moreover, $M_B$ has the same number of $m$--periodic trajectories
	as $M_A$ does \cite[Theorem 8.1]{Liu_profinite_almost_rigidity}.
	Then the $m$--periodic trajectories of $M_B$
	must project exactly 
	the same set of $\widehat{\Integral}^\times$--equivalence classes
	in $\mathrm{Orb}(\Gamma_m)$ as those of $M_A$ do,
	(applying (\ref{profinite_invariance_L_m}) for every $\omega\in\Omega(\Gamma_m)$).
	This property persists as we pass to 
	any finer $\Psi$--corresponding quotients,
	(that is,
	$\pi_A\to \Gamma$ and $\pi_B\to \Gamma$
	which descend to $\gamma_{m,A}$ and $\gamma_{m,B}$ 
	under a further quotient $\Gamma\to\Gamma_m$).
	Then the property also persists as we pass to the profinite completions
	$\pi_A\to\widehat{\pi}_A$ and $\pi_B\to\widehat{\pi}_B$.
	
	To summarize,
	we have established bijective correspondences between the $m$--periodic trajectories 
	$\mathbf{c}_A=\ell_m(f_A;\mathbf{O}_A)$ of $M_A$
	and $\mathbf{c}_B=\ell_m(f_B;\mathbf{O}_B)$ of $M_B$,
	for any $m\in\Natural$.
	They satisfy the condition that $\Psi(\hat{\mathbf{c}}_A)$ 
	is $\widehat{\Integral}^\times$--equivalent to $\hat{\mathbf{c}}_B$,
	as asserted.
\end{proof}

Next,
we draw another consequence of the $\widehat{\Integral}^\times$--regularity theorem
for profinite isomorphisms between closed hyperbolic $3$-manifold groups 
\cite[Theorem 1.2]{Liu_profinite_almost_rigidity}.
The following lemma is needed in Section \ref{Sec-profin_invar_vol}
for proving Lemma \ref{profinite_invariance_vol_pA}.
In the statement, the notation $H_*(\pi_{A/B};\Integral)$ denotes the group homology
(naturally isomorphic to the homology of $M_{A/B}$),
and the notation $H_*^{\mathtt{cont}}(\widehat{\pi}_{A/B};\widehat{\Integral})$
denotes the continuous group homology
(also known as the profinite group homology in \cite{Ribes--Zalesskii_book}).	

\begin{lemma}\label{Mu_cube}
Let $(M_A,M_B,\Psi)$ be
a profinite isomorphism setting of pseudo-Anosov mapping tori,
(Convention \ref{profinite_isomorphism_setting_pA}).
Then, 
$\Psi_*\colon H_3^{\mathtt{cont}}(\widehat{\pi}_A;\widehat{\Integral})\to H_3^{\mathtt{cont}}(\widehat{\pi}_B;\widehat{\Integral})$
takes the form 
$$\Psi_*[M_A]=\pm\mu^3\cdot[M_B].$$
Here, $[M_A]\in H_3(\pi_A;\Integral)$ and $[M_B]\in H_3(\pi_B;\Integral)$ 
are fundamental classes of $M_A$ and $M_B$,
with respect to arbitrarily fixed orientations,
and also treated as in
$H_3^{\mathtt{cont}}(\widehat{\pi}_{A/B};\widehat{\Integral})$
via the natural inclusion;
$\mu\in\widehat{\Integral}^\times$ is the same as in Convention \ref{profinite_isomorphism_setting_pA}.
\end{lemma}

\begin{proof}
We start by mentioning some simple facts regarding profinite group homology
of finitely generated free abelian groups.

For any finitely generated free abelian groups 
$H_A=\Integral u_1\oplus \cdots\oplus \Integral u_a$ and $H_B=\Integral v_1\oplus \cdots\oplus\Integral v_b$,
we have natural identifications of graded $\widehat{\Integral}$--modules
$H_*^{\mathtt{cont}}(\widehat{H}_A;\widehat{\Integral})\cong \Lambda_\Integral(u_1,\cdots,u_a)\otimes_\Integral\widehat{\Integral}$
and
$H_*^{\mathtt{cont}}(\widehat{H}_B;\widehat{\Integral})\cong \Lambda_\Integral(v_1,\cdots,v_b)\otimes_\Integral\widehat{\Integral}$,
where $\Lambda_\Integral$ denotes the exterior $\Integral$--algebra (with generators of degree $1$),
and where  $H_*^{\mathtt{cont}}$ denotes the profinite group homology.
Any group homomorphism $\Phi\colon \widehat{H}_A\to \widehat{H}_B$
is naturally identified as an element
$\Phi\in \mathrm{Hom}(H_A,H_B)\otimes_\Integral\widehat{\Integral}$.
Moreover,
the induced homomorphism
$\Phi_*\colon H_*^{\mathtt{cont}}(\widehat{H}_A;\widehat{\Integral})\to H_*^{\mathtt{cont}}(\widehat{H}_B;\widehat{\Integral})$
can be naturally identified with the induced homomorphisms of graded $\widehat{\Integral}$--module,
which preserves the exterior product,
namely, $\Phi_*(u_{k_1}\wedge\cdots u_{k_j})=\Phi(u_{k_1})\wedge\cdots\wedge\Phi(u_{k_j})$ for all $j$.

In fact, the identification of $H_*^{\mathtt{cont}}(\widehat{H}_{A/B};\widehat{\Integral})$
follows from the fact that that finitely generated free abelian group are homologically good,
which implies 
$H_*^{\mathtt{cont}}(\widehat{H}_{A/B};\widehat{\Integral})\cong H_*(H_{A/B};\Integral)\otimes_\Integral\widehat{\Integral}$;
the identification of $\Phi_*$ can be follows easily
from the K\"unneth isomorphism
$H_*^{\mathtt{cont}}(\widehat{\Integral} u_1; \widehat\Integral)\otimes_{\widehat\Integral}\cdots\otimes_{\widehat{\Integral}}
H_*^{\mathtt{cont}}(\widehat{\Integral} u_a; \widehat\Integral)
\cong
H_*^{\mathtt{cont}}(\widehat{H}_A;\widehat{\Integral})$ in this case,
and similarly with $\widehat{H}_B$,
and the isomorphism can be directly set up
by using inclusions of and projections onto the direct summands, just as usual.

With the above identifications understood, 
we turn to the setting $(M_A,M_B,\Psi)$ of Lemma \ref{Mu_cube}.
Denote by $H_A$ and $H_B$ the free abelianization of $\pi_A$ and $\pi_B$, respectively.
Denote by $\Phi\colon \widehat{H}_A\to \widehat{H}_B$
the profinite isomorphism induced by $\Psi\colon\widehat{\pi}_A\to \widehat{\pi}_B$.
Fix auxiliary fundamental classes $[M_A]\in H_3(M_A;\Integral)\cong H_3(\pi_A;\Integral)$
and similarly $[M_B]$.

The key result \cite[Theorem 1.2]{Liu_profinite_almost_rigidity}
shows that $\Phi$ must take the form $\mu\cdot \widehat{F}$
for some $\mu\in\widehat{\Integral}^\times$ and some $F\in \mathrm{Isom}(H_A,H_B)$.
Since the classes of periodic trajectories in $M_{A/B}$ 
span $H_{A/B}$ (Lemma \ref{normal_generation_pt})
and arise in correspondence (Lemma \ref{profinite_correspondence_pt}),
we infer that $\mu$ is 
up to sign the same as provided in Convention \ref{profinite_isomorphism_setting_pA}.
We obtain the commutative diagram
$$
\xymatrix{
H_3^{\mathtt{cont}}(\widehat{\pi}_A;\widehat{\Integral}) \ar[r] \ar[d]_{\cong}^{\Psi_*} & 
H_3^{\mathtt{cont}}(\widehat{H}_A;\widehat{\Integral}) \ar[d]_{\cong}^{\Phi_*} \\
H_3^{\mathtt{cont}}(\widehat{\pi}_B;\widehat{\Integral}) \ar[r] & H_3^{\mathtt{cont}}(\widehat{H}_B;\widehat{\Integral}) 
}$$
Under the natural identification as we explained, $H_3^{\mathtt{cont}}(\widehat{H}_{A/B};\widehat{\Integral})\cong 
(\Lambda^3_\Integral H_{A/B})\otimes_\Integral \widehat{\Integral}$,
the induced isomorphism
$\Phi_*\colon H_3^{\mathtt{cont}}(\widehat{H}_{A};\widehat{\Integral})\to H_3^{\mathtt{cont}}(\widehat{H}_{B};\widehat{\Integral})$ 
takes the form
$\wedge^3\Phi=\mu^3\cdot (\wedge^3\widehat{F})$ as a $\widehat{\Integral}$--linear isomorphism
$(\Lambda^3_\Integral H_{A})\otimes_\Integral \widehat{\Integral}\to
(\Lambda^3_\Integral H_{B})\otimes_\Integral \widehat{\Integral}$,
since $\Phi_*$ respects the exterior algebra structure in this case.

In order to prove the lemma, 
observe 
$H_3^{\mathtt{cont}}(\widehat{\pi}_A;\widehat{\Integral})\cong
 H_3^{\mathtt{cont}}(\widehat{\pi}_B;\widehat{\Integral})\cong \widehat{\Integral}$.
Then, for some $\lambda\in\widehat{\Integral}^\times$,
we have the relation
$$\Psi_*[M_A]=\lambda\cdot[M_B].$$
The commutativity of the above diagram implies 
$\Phi_*[M_A]_H=\lambda\cdot [M_B]_H$, where the subscript $H$ means passing to
$H_3^{\mathtt{cont}}(H_{A/B};\widehat{\Integral})$. 
Under the natural identification as explained,
the above equation can be rewritten as
$$\mu^3\cdot ((\wedge^3 F)\,[M_A]_H)=\lambda\cdot [M_B]_H,$$
as an equation in $(\Lambda^3_\Integral H_B)\otimes_\Integral \widehat\Integral$.
Note that both $[M_B]_H$ and $(\wedge^3 F)[M_A]_H$ lie 
in the natural $\Integral$--submodule $\Lambda^3_\Integral H_B$,
(so we can simply write $\wedge^3 F$ instead of $\wedge^3 \widehat{F}$ without affecting the result).

Let us first consider the special case where $[M_A]_H\in \Lambda^3_{\Integral} H_A$ is nonzero.
In this case, $[M_B]_H$ and $(\wedge^3F)\,[M_A]_H$ are both nonzero,
so we obtain 
$$\lambda=\pm\mu^3$$
as desired,
(using the fact $\widehat{\Integral}^\times\cap\Integral=\{\pm1\}$).

In order to prove the general case,
we make use of the fact that some finite cover $M'_A\to M_A$ satisfies 
the condition that $[M'_A]_H\in \Lambda^3_{\Integral} H'_A$ is nonzero.
This condition holds whenever the triple product 
$H^1(M'_A;\Integral)\times H^1(M'_A;\Integral)\times H^1(M'_A;\Integral)\to H^3(M'_A;\Integral)
\colon (\alpha,\beta,\gamma)\mapsto \alpha\smile\beta\smile\gamma$
does not vanish.
For example, this is the case when $M'_A$ admits a nonzero degree map onto
the $3$--torus $S^1\times S^1\times S^1$,
(because the cohomology ring $H^*(S^1\times S^1\times S^1;\Integral)$
embeds into $H^*(M'_A;\Integral)$ in this case, 
as a standard exercise of the Poincar\'e duality).
Such a finite cover $M'_A$ always exists, 
and indeed,
it is known that any closed hyperbolic $3$-manifold admits a finite cover
that maps onto any prescribed closed orientable $3$--manifold
with any prescribed nonzero degree \cite{Liu--Sun_domination}. 

Take a finite cover $\kappa_A\colon M'_A\to M_A$ for which $[M'_A]_H\in \Lambda^3_{\Integral} H'_A$ is nonzero.
Let $\kappa_B\colon M'_B\to M_B$ be the corresponding finite cover with respect to $\Psi$,
and $\Psi'\colon \widehat{\pi}'_A\to\widehat{\pi}'_B$ be the restriction of $\Psi$.
For the induced profinite isomorphism setting $(M'_A,M'_B,\Psi')$,
the fibered classes $\phi'_A$ and $\phi'_B$ are just the pull-backs $\kappa_A^*\phi_A$ and $\kappa_B^*\phi_B$,
the irregularity factor $\mu'$ remains the same as $\mu\in\widehat{\Integral}^\times$.
Applying the special case, we obtain $\Psi'_*[M'_A]=\pm \mu^3\cdot[M'_B]$.
Using the relation $\Psi_*\circ\kappa_{A*}=\kappa_{B*}\circ\Psi'_*$
and $[M'_A:M_A]=[M'_B:M_B]=d$,
we obtain the relation $\lambda d=\pm \mu^3d$ in $\widehat{\Integral}$,
which implies $\lambda=\pm \mu^3$ as desired,
(using the fact any nonzero $d\in\Integral$ is not a zero divisor in $\widehat{\Integral}$).
\end{proof}

\section{On the profinite power operation}\label{Sec-profin_power_operation}
In this section, we prove some algebraic lemmas
(Lemmas \ref{mu_square} and \ref{A_mu}), 
regarding the profinite power operation
$t\mapsto t^\nu$ in the profinite cyclic group $t^{\widehat{\Integral}}$
and $A\mapsto A^\nu$ in the matrix group $\mathrm{SL}(2,R)$ over some profinite ring $R$,
where $\nu$ is a profinite integer, namely, $\nu\in\widehat{\Integral}$.
These lemmas are prepared for Sections \ref{Sec-mu_square} and \ref{Sec-profin_corresp_rep}.

\begin{lemma}\label{mu_square}
	Let $f(t)\in\Integral[t]$ be a monic reciprocal polynomial
	in an indeterminate $t$. 
	Denote by $\llbracket\widehat{\Integral}t^{\widehat{\Integral}}\rrbracket$
	the profinite ring completion of 
	the group ring $[\Integral t^\Integral]=\Integral[t,t^{-1}]$.	
	Suppose that 
	the principal ideals $(f(t))$ and $(f(t^\mu))$ are equal in 
	$\llbracket\widehat{\Integral}t^{\widehat{\Integral}}\rrbracket$
	for some $\mu\in\widehat{\Integral}^\times$,
	
	Then, either $f(t)$ is a product of cyclotomic factors, 
	or $\mu^2=1$ holds in $\widehat{\Integral}^\times$.
\end{lemma}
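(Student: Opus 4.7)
The plan is to assume $f(t)$ is not a product of cyclotomic factors and derive $\mu^2=1$. By Kronecker's theorem, $f$ then admits a root $\alpha\in\overline{\Rational}$ with $|\alpha|>1$ in some archimedean embedding (using reciprocity to fix the sign); since $f$ is monic reciprocal, $\alpha$ is moreover an algebraic unit in $K=\Rational(\alpha)$. Fix any rational prime $p$ and any prime $\mathfrak{p}$ of $K$ above $p$, and let $O$ denote the $\mathfrak{p}$-adic completion of $O_K$. As $\alpha\in O^\times$, a profinite topological group, the unique continuous homomorphism $\widehat{\Integral}\to O^\times$ sending $1\mapsto\alpha$ gives a well-defined $\alpha^\mu\in O^\times$ for every $\mu\in\widehat{\Integral}$. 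Combined with the natural map $\widehat{\Integral}\to\Integral_p\subset O$ on coefficients, this induces a continuous ring homomorphism
\[
\mathrm{ev}_\alpha\colon\llbracket\widehat{\Integral}t^{\widehat{\Integral}}\rrbracket\longrightarrow O
\]
sending $t\mapsto\alpha$ and $t^\mu\mapsto\alpha^\mu$.

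Applying $\mathrm{ev}_\alpha$ to the ideal identity $(f(t))=(f(t^\mu))$ yields $(f(\alpha))=(f(\alpha^\mu))$ in $O$, and since $f(\alpha)=0$, this forces $f(\alpha^\mu)=0$. Thus $\alpha^\mu$ is itself a root of $f$ in $O$. The same reasoning applied to every root defines a self-map $\sigma\colon R\to R$, $\beta\mapsto\beta^\mu$, on the finite set $R$ of roots of $f$ in $O$. Injectivity of $\sigma$ follows because if $\beta_1^\mu=\beta_2^\mu$ then $\gamma:=\beta_1\beta_2^{-1}$ satisfies $\gamma^\mu=1$; since $\mu\in\widehat{\Integral}^\times$ acts as a unit on both the torsion (Teichm\"uller) factor of $O^\times$ and on its torsion-free pro-$p$ part, this forces $\gamma=1$. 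Hence $\sigma$ is a permutation of order some positive integer $N$, giving $\alpha^{\mu^N-1}=1$ in $O^\times$. Because $\alpha$ is not a root of unity (as $|\alpha|>1$), its image in the torsion-free quotient of $O^\times$ is nonzero, and one concludes $\mu_p^N-1=0$ in $\Integral_p$, where $\mu_p$ denotes the $p$-adic component of $\mu$. In particular, $\mu_p$ is an algebraic $N$-th root of unity.

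To upgrade this to $\mu_p\in\Rational$, apply Mahler's $p$-adic analogue of the Gelfond--Schneider theorem. Let $r$ be the order of the residue $\bar\alpha$ in the residue field of $O$ and set $\tilde\alpha:=\alpha^{rp^k}\in 1+\mathfrak{m}$ for $k$ large enough that $\exp_p(\mu_p\log_p\tilde\alpha)$ converges. Since $\tilde\alpha$ is algebraic and not a root of unity, while $\tilde\alpha^{\mu_p}=(\alpha^\mu)^{rp^k}$ is algebraic (as $\alpha^\mu\in R$), Mahler's theorem forces $\mu_p\in\Rational$. Combined with $\mu_p^N=1$ and the fact that $\pm 1$ are the only rational roots of unity, this gives $\mu_p\in\{\pm 1\}$, so $\mu_p^2=1$. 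Since $p$ was arbitrary, $\mu^2=1$ in $\widehat{\Integral}^\times=\prod_p\Integral_p^\times$. The main obstacle is the transcendence input from Mahler's theorem, which is the deep number-theoretic ingredient; the remaining steps amount to a careful translation of the profinite ideal equality into algebraic relations on roots of $f$, plus a structural analysis of the $p$-adic unit group $O^\times$.
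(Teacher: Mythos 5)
Your proof is correct and follows essentially the same strategy as the paper: evaluate the profinite ideal equality at a $p$-adic root of $f$ that is not a root of unity, deduce that the $\mu$-power map permutes the roots so that $\mu_p$ is a root of unity, and then invoke Mahler's $p$-adic Gelfond--Schneider theorem to force $\mu_p\in\{\pm1\}$. The small presentational differences — selecting the root via Kronecker's theorem, and establishing injectivity of the root permutation through the structure of $O^\times$ rather than by evaluating at $t^{1/\mu}$ — do not change the argument in substance.
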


\begin{proof}
	For any rational prime $p$,
	let $L$ be an algebraic field extension over $\Rational_p$ of finite degree.
	Denote by $|\cdot|_p$ the unique $p$--adic norm on $L$ 
	extending that of $\Rational_p$.
	Denote by $R=\{x\in L\colon |x|_p\leq 1\}$ the valuation ring of $L$, 
	and $R^\times=\{x\in L\colon |x|_p=1\}$ the multiplicative group of units in $R$.
	
	For any $\alpha\in R^\times$, we obtain a unique continuous ring homomorphism
	$\mathrm{ev}_\alpha\colon\llbracket\widehat{\Integral}t^{\widehat{\Integral}}\rrbracket\to R$,
	such that $t\mapsto \alpha$.
	The principal ideal $(f(t))$ is contained in the kernel of the $\mathrm{ev}_\alpha$
	if and only if $f(\alpha)=0$ holds in $L$.
	Since $f$ is monic and reciprocal, any zero of $f$ in $L$
	is an algebraic integer, hence lying in $R^\times$. 
	The ideal equation $(f(t))=(f(t^\mu))$ in $\llbracket\widehat{\Integral}t^{\widehat{\Integral}}\rrbracket$
	implies that $\alpha^{1/\mu}$ is a zero of $f$ in $L$
	if and only if $\alpha$ is a zero of $f$ in $L$.
	Since there are only finitely many zeros,
	there exists some $d\in\Natural$,
	such that the group automorphism
	$R^\times\to R^\times\colon \alpha\mapsto \alpha^{\mu^d}$ 
	fixes every zero of $f$ in $L$.
	
	Below we assume that $f$ is not a product of cyclotomic factors.
	Fix a root $\xi$ of $f(t)\in\Integral[t]$ that is not a root of unity.
	We work with the field $L=\Rational_p(\xi)$ for an arbitrary $p$,
	fixing an embedding of $L$ into $\Complex_p$
	(the $p$--adic completion of an algebraic closure of $\Rational_p$).
	There exists some $m\in\Natural$, such that $|\alpha^m-1|_p<p^{-1}$
	holds for all $\alpha\in R^\times$.
	(For example, $m$ can be taken as $p^k\cdot(p^l-1)$
	where $l=[L:\Rational_p]$ and $k=\lfloor \log l/\log p\rfloor +1$;
	see Remark \ref{mu_square_remark}.)
	Since the multiplicative subgroup $1+pR$ of $R^\times$ is pro--$p$,
	the power action of $\widehat{\Integral}$ on $1+pR$
	factors through the natural projection onto $\Integral_p$.
	Therefore,
	$\xi^{ma}=\xi^{ma_p}$ holds for any $a\in\widehat{\Integral}$,
	where $a_p\in\Integral_p$ denotes the $p$--summand of $a$.
	Therefore, we obtain an equation in $\Complex_p$:
	$$\xi^m=\xi^{m\mu_p^d}.$$
	Since $\xi$ is not a root of unity,
	we infer $\mu_p^d=1$ in $\Integral_p$. 
	(Otherwise, the equation $(1+x)^\delta=1$ with $\delta=\mu^d_p-1$
	would have arbitrarily small solutions of the form $x=\xi^{mp^n}-1$,
	where $n\in\Natural$,
	contradicting the invertibility of $(1+x)^{\delta}$ near $x=0$ in $\Complex_p$.)	
	This means $\mu_p$ is a root of unity.
	(Compare \cite[Proposition 1.3]{Ueki_TAP}.)
	
	We have observed that $\eta=\xi^\mu$ is again some zero of $f$ in $L$.
	Raising to the $m$--th power, we obtain
	$$\xi^{m\mu_p}=\eta^m.$$
	For any algebraic integers $\alpha,\beta\in\Complex_p\setminus\{0,1\}$,
	such that $|\alpha-1|_p<p^{-1}$ and $|\beta-1|_p<p^{-1}$,
	the $p$--adic logarithmic ratio $\log_p\alpha/\log_p\beta$
	is either rational or transcendental,
	(or equivalently, $\alpha^\theta=\beta$
	implies $\theta$ rational or transcendental in $\Complex_p$).
	This is known as Mahler's $p$--adic analogue 
	of the Gelfond--Schneider theorem,
	\cite{Mahler_GS}	(see also \cite{Veldkamp}).
	Since $\xi$ is not a root of unity,
	we infer that $\mu_p\in\Integral_p^\times$ must be a rational integer,
	and hence $\pm1$.
	(Besides, 
	$\eta$ should be $\xi$ or $\xi^{-1}$ times some root of unity in $L$.)
	In particular, $\mu_p^2=1$ holds in $\Integral_p^\times$.
	Since $p$ is arbitrary,
	we obtain $\mu^2=1$ 
	in $\widehat{\Integral}^\times=\prod_p\Integral_p^\times$,
	as asserted.	
\end{proof}

\begin{remark}\label{mu_square_remark}
	Let $p$ be a rational prime and
	$L$ be a field extension $L$ over $\Rational_p$ of finite algebraic degree.
	Denote by $R$ the $p$--adic valuation ring of $L$
	and $\mathfrak{M}$ the maximal ideal of $R$.
	For any $\lambda\in R^\times$,
	there is a unique factorization $\lambda=(1+\beta)\cdot\zeta$,
	where $|\beta|_p<1$, and	
	where $\zeta$ is a root of unity of order coprime to $p$.
	This agrees with the canonical decomposition
	of the multiplicative group
	$R^\times\cong (1+\mathfrak{M})\times (R/\mathfrak{M})^\times$.
	Note that the additive coset $1+\mathfrak{M}$ forms
	a multiplicative pro--$p$ subgroup.
	Then for any $a\in\widehat{\Integral}$, 
	we obtain a formula
	$$\lambda^a=(1+\beta)^{a_p}\cdot \zeta^{a \bmod (q-1)}
	=\left[\sum_{m=0}^{\infty}\frac{a_p\cdot(a_p-1)\cdot\cdots\cdot(a_p-m+1)}{m!}\cdot\beta^m\right]\cdot\zeta^{a \bmod (q-1)},$$
	where $q$ denotes the cardinality of the residue field $R/\mathfrak{M}$.
	Since $a_p\in\Integral_p$,
	the above $p$--adic binomial series 
	converges to a continuous function of $\beta\in\mathfrak{M}$,
	(see \cite[Chapter IV, Section 1]{Koblitz_book}).
	The function agrees with the $\Integral_p$--power operation 
	$1+\beta\mapsto(1+\beta)^{a_p}$ on $1+\mathfrak{M}$,
	because	for every $a_p\in\Natural$,
	the agreement is obvious by the usual binomial expansion.	
\end{remark}

\begin{lemma}\label{A_mu}
	Let $p$ be a rational prime and 
	$L$ be a field extension $L$ over $\Rational_p$ of finite algebraic degree.
	Denote by $R$ the $p$--adic valuation ring of $L$
	and $\mathfrak{M}$ the maximal ideal of $R$.
	
	If $A\in\mathrm{SL}(2,R)$ and $\mu\in\widehat{\Integral}^\times$
	satisfy $A\equiv I\bmod \mathfrak{M}$, and $\mu^2=1$,
	then $\mathrm{tr}(A^\mu)=\mathrm{tr}(A)$.
\end{lemma}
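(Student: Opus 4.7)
The plan is to reduce the profinite power $A^\mu$ to an $\Integral_p$-power operation, by exploiting the fact that the principal congruence subgroup of $\mathrm{SL}(2,R)$ modulo $\mathfrak{M}$ is pro-$p$, and then to observe that a square root of unity in $\Integral_p$ can only be $\pm 1$.

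First I would verify that $G = \{B \in \mathrm{SL}(2,R) : B \equiv I \bmod \mathfrak{M}\}$ is a pro-$p$ group. Setting $G_n = \ker(\mathrm{SL}(2,R) \to \mathrm{SL}(2, R/\mathfrak{M}^n))$, each $G_n$ is open and normal in $\mathrm{SL}(2,R)$, and for $n \geq 1$ the successive quotient $G_n/G_{n+1}$ embeds additively into $\mathrm{Mat}(2, \mathfrak{M}^n/\mathfrak{M}^{n+1})$, which is an elementary abelian $p$-group because $R/\mathfrak{M}$ has residue characteristic $p$. Consequently $G = G_1 = \varprojlim_n G_1/G_n$ is pro-$p$.

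Next, since $G$ is pro-$p$ and $A \in G$, for every open normal subgroup $N \trianglelefteq G$ the image of $A$ has $p$-power order in the finite $p$-group $G/N$, so the orbit homomorphism $\Integral \to G/N$, $n \mapsto A^n \bmod N$, factors through $\Integral/p^k\Integral$ for some $k$, and hence through $\Integral_p$. Passing to the inverse limit over $N$, the profinite power operation $\widehat{\Integral} \to G$, $\nu \mapsto A^\nu$, factors through the canonical projection $\widehat{\Integral} \to \Integral_p$. Therefore $A^\mu = A^{\mu_p}$, where $\mu_p \in \Integral_p^\times$ denotes the $p$-component of $\mu$.

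Finally, from $\mu^2 = 1$ in $\widehat{\Integral}^\times = \prod_q \Integral_q^\times$ one extracts $\mu_p^2 = 1$ in $\Integral_p^\times$; since $\Integral_p$ is an integral domain, $(\mu_p - 1)(\mu_p + 1) = 0$ forces $\mu_p \in \{\pm 1\}$. Hence $A^\mu$ equals either $A$ or $A^{-1}$, and Proposition \ref{trace_relations}(1) yields $\mathrm{tr}(A^\mu) = \mathrm{tr}(A)$ in both cases. The only point demanding attention is the pro-$p$-ness of $G$ when $p = 2$, where elements such as $-I$ can lie in $G$; this is harmless, because $-I$ has order $2 = p$ and fits into the pro-$p$ framework, so the argument proceeds uniformly in $p$.
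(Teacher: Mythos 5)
Your proof is correct and follows essentially the same route as the paper: identify the $\mathfrak{M}$-congruence subgroup of $\mathrm{SL}(2,R)$ as pro-$p$ via the successive quotients embedding into $\mathrm{Mat}(2,\mathfrak{M}^n/\mathfrak{M}^{n+1})$, conclude that $A^\mu = A^{\mu_p}$, and finish with $\mu_p\in\{\pm1\}$ and $\mathrm{tr}(A^{-1})=\mathrm{tr}(A)$ from $\det A = 1$. The closing aside about $p=2$ and $-I$ is accurate but not really needed, since the $p$-group structure of the successive quotients already handles every prime uniformly.
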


\begin{proof}
	The principal $\mathfrak{M}$--congruence subgroup $I+\mathrm{Mat}(2,\mathfrak{M})$
	of $\mathrm{GL}(2,R)$ is pro--$p$.
	In fact, for all $n\in\Natural$, the quotient groups of
	$I+\mathrm{Mat}(2,\mathfrak{M}^n)$ by $I+\mathrm{Mat}(2,\mathfrak{M}^{n+1})$
	are all cosets of finite-dimensional vector spaces 
	over the residue field of characteristic $p$, 
	affinely isomorphic to $\mathrm{Mat}(2,\mathfrak{M}^n)\otimes_{R}(R/\mathfrak{M})$.
	It follows that the intersection of $\mathrm{SL}(2,R)$ with $I+\mathrm{Mat}(2,\mathfrak{M})$
	is also pro--$p$.
	
	If $A\equiv I\bmod \mathfrak{M}$,
	we obtain $A^\mu=A^{\mu_p}$ in $\mathrm{SL}(2,R)$,
	where $\mu_p$ denotes the $\Integral_p$--summand of $\mu$ in $\widehat{\Integral}$.
	If $\mu^2=1$, we obtain $\mu_p=\pm1$ in $\Integral_p$.
	Then 
	$$\mathrm{tr}(A^\mu)=
	\mathrm{tr}(A^{\mu_p})=\mathrm{tr}(A^{\pm1})=\mathrm{tr}(A),$$
	the last step using $\mathrm{det}(A)=1$.
\end{proof}

\section{Mu square equals one}\label{Sec-mu_square}
In this section, we improve a main result in \cite{Liu_profinite_almost_rigidity},
obtaining more information about the $\widehat{\Integral}^\times$--regularity factor $\mu$.


\begin{lemma}\label{p_regularity}
Let $(M_A,M_B,\Psi)$ be
a profinite isomorphism setting of pseudo-Anosov mapping tori,
(Convention \ref{profinite_isomorphism_setting_pA}).
Then, $\mu^2=1$ holds in $\widehat{\Integral}^\times$.
\end{lemma}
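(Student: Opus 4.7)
The plan is to reduce Lemma \ref{p_regularity} to Lemma \ref{mu_square} by producing a monic reciprocal polynomial $f(t)\in\Integral[t]$ that is not a product of cyclotomic factors and that satisfies the ideal equation $(f(t))=(f(t^\mu))$ in the profinite completion $\llbracket\widehat{\Integral}t^{\widehat{\Integral}}\rrbracket$. The polynomial will be a suitable twisted Alexander polynomial associated to a pair of $\Psi$--corresponding finite quotients $\gamma_A\colon\pi_A\to\Gamma$ and $\gamma_B\colon\pi_B\to\Gamma$.

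First I would invoke the resolution of McMullen's conjecture on virtual homological spectral radii \cite{Liu_vhsr}: for the pseudo-Anosov $f_A\colon S_A\to S_A$, there exist a finite cover $S'_A\to S_A$ and a lift $f'_A\colon S'_A\to S'_A$ such that $f'_{A*}$ acting on $H_1(S'_A;\Complex)$ has a complex eigenvalue $\xi$ strictly outside the unit circle. By passing to a characteristic cover and a power of $f_A$ if necessary, we may assume that $S'_A\to S_A$ arises from a finite regular cover $M'_A\to M_A$, so that the cover is encoded by a surjection $\gamma_A\colon\pi_A\to\Gamma$ whose restriction to $\Sigma_A=\pi_1(S_A)$ kills precisely $\pi_1(S'_A)$. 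The characteristic polynomial $f(t)=\det(tI-f'_{A*})\in\Integral[t]$ is monic and reciprocal (up to a unit, using that $f'_{A*}$ preserves the symplectic intersection pairing on $H_1(S'_A;\Integral)$), and because $\xi$ is not a root of unity, $f(t)$ is not a product of cyclotomic factors.

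Next I would transport $\gamma_A$ through $\Psi$ to a corresponding homomorphism $\gamma_B\colon\pi_B\to\Gamma$. By the commutative diagram (\ref{Sigma_mu_diagram}), $\gamma_B$ restricts on $\Sigma_B$ to determine a finite regular cover $S'_B\to S_B$ with an induced pseudo-Anosov automorphism $f'_B\colon S'_B\to S'_B$, and hence a characteristic polynomial $f_B(t)\in\Integral[t]$ of $f'_{B*}$ on $H_1(S'_B;\Integral)$. The correspondence of periodic trajectories together with the profinite invariance of twisted Lefschetz numbers established in \cite{Liu_profinite_almost_rigidity} (used in the proof of Lemma \ref{profinite_correspondence_pt}) implies that the Nielsen numbers and the indexed periodic trace data of $f'_A$ and $f'_B$ match under the substitution $t\mapsto t^\mu$; packaged in terms of characteristic polynomials, this yields the ideal identity $(f(t))=(f_B(t))=(f(t^\mu))$ in $\llbracket\widehat{\Integral}t^{\widehat{\Integral}}\rrbracket$. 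Applying Lemma \ref{mu_square} to $f(t)$ then forces $\mu^2=1$.

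The main obstacle is the bookkeeping in the third step: matching the characteristic polynomial of $f'_{B*}$ with $f(t^\mu)$ rather than with some twisted variant depending on the component structure of $S'_B$ or on the decomposition of $H_1$ into isotypic pieces under $\Gamma$. Concretely, one must argue that $\mu$ acts on the eigenvalues of $f'_{A*}$ exactly via the $\widehat{\Integral}^\times$--power operation on the monodromy element $t\in \pi_A/\Sigma_A$, and that the correspondence of $\Gamma$--twisted dynamical data in \cite{Liu_profinite_almost_rigidity} transfers cleanly to equality of (twisted) Alexander polynomials in the profinite Laurent polynomial ring. Once this identification is in hand, the rest is a direct appeal to Lemma \ref{mu_square} via the non-cyclotomic eigenvalue $\xi$ produced by \cite{Liu_vhsr}.
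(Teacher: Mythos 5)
Your proposal is essentially the same approach as the paper's: invoke \cite{Liu_vhsr} to produce a covering mapping torus whose Alexander polynomial (characteristic polynomial of $f'_*$ on $H_1(S';\Integral)$) is monic, reciprocal, and not a product of cyclotomic factors; pass to the $\Psi$--corresponding pair of covers; derive the ideal equation $(P(t))=(P(t^\mu))$ in $\llbracket\widehat{\Integral}t^{\widehat{\Integral}}\rrbracket$; and conclude via Lemma~\ref{mu_square}. Two places where the paper is more precise than your sketch and you should tighten up. First, the ideal equation: rather than trying to reassemble it from twisted Lefschetz numbers and periodic trace data, the paper cites the profinite invariance of Reidemeister torsion directly (Theorem 7.2 of \cite{Liu_profinite_almost_rigidity}, applied with the trivial $\Gamma$--representation, together with Lemmas 7.7 and 7.8 there), which gives $\tau^{\phi'_A}_{M'_A}\doteq\tau^{\phi'_B}_{M'_B}$ and hence $\Delta^{\phi'_A}_{M'_A}\doteq\Delta^{\phi'_B}_{M'_B}$ and the ideal identity; your route via Lefschetz data is not wrong in spirit, but would require carefully reconstituting the torsion from the zeta function, which the reference has already done. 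Second, and this is the point you flag as "the main obstacle" without resolving: one must check that the $\widehat{\Integral}^\times$--factor $\mu'$ appearing in the covering diagram (\ref{covering_Sigma_mu_diagram}) actually equals the original $\mu$ from diagram (\ref{Sigma_mu_diagram}). The paper verifies this by fitting both into parallel commutative squares involving $\mathrm{pwr}_k$ for $k=[M'_{A/B}:M_{A/B}]/[S'_{A/B}:S_{A/B}]$; without this step the application of Lemma~\ref{mu_square} only controls $\mu'$ rather than $\mu$, and the argument would not close.
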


\begin{proof}
	Let $M_f$ be the mapping torus of any pseudo-Anosov automorphism $f\colon S\to S$
	of a closed orientable surface. For any quotient homomorphism $\pi_1(M_f)\to\Gamma$
	onto a finite group $\Gamma$, the kernel of the homomorphism corresponds to 
	a regular finite cover $M'\to M_f$. 
	The suspension flow lifts to $M'$, transverse to any preimage component $S'$ of $S$,
	so the return map of the lifted flow gives rise to a pseudo-Anosov automorphism $f'\colon S'\to S'$.
	This way, $M'$ can be identified as the mapping torus $M_{f'}$.
	We denote by $\phi_f\in H^1(M_f;\Integral)$ and $\phi_{f'}\in H^1(M_{f'};\Integral)$
	the distinguished cohomology classes dual to the distinguished fibers $S$ and $S'$,
	respectively. Then $[M':M]/[S':S]$ times $\phi_{f'}$ agrees with the pullback of $\phi_f$
	with respect to the covering projection $M_{f'}\to M_f$.
	
	In \cite{Liu_vhsr}, it is shown that for any pseudo-Anosov mapping torus $M_f$ as above,
	there exists a covering mapping torus $M'=M_{f'}$, such that the Alexander polynomial
	$\Delta_{M'}^{\phi'}\in\Integral[t,t^{-1}]$ (up to a factor in 
	the group of units $\{\pm1\}\times t^{\Integral}$)
	with respect to $\phi'=\phi_{f'}$ has a root in $\Complex$ outside the unit circle.
	In the case of mapping tori,
	the Alexander polynomial $\Delta_{M'}^{\phi'}(t)$ can be identified
	with the characteristic polynomial of the cohomological action
	$f'_*\colon H_1(S';\Integral)\to H_1(S';\Integral)$,
	namely,
	$$\Delta_{M'}^{\phi'}(t)\doteq \mathrm{det}_{\Integral[t]}\left(t\cdot\mathrm{id}-f'_*\right),$$
	where $\doteq$ means an equality in $\Integral[t,t^{-1}]$ 
	up to a factor in $\{\pm1\}\times t^{\Integral}$.
	In particular, we may represent $\Delta_{M'}^{\phi'}(t)$
	with a unique monic reciprocal polynomial in $\Integral[t]$ of degree $\chi(S')+2$.
	The Reidemeister torsion $\tau_{M'}^{\phi'}(t)$ lives in 
	the field of fractions $\Rational(t)=\mathrm{Frac}(\Integral[t,t^{-1}])$
	up to a factor in $\{\pm1\}\times t^{\Integral}$.
	In the mapping torus case, $\tau_{M'}^{\phi'}(t)$ can be identified with
	the alternating product of the characteristic polynomials
	of $f'_*\colon H_*(S';\Integral)\to H_*(S';\Integral)$,
	namely,
	$$\tau_{M'}^{\phi'}(t)\doteq\Delta_{M'}^{\phi'}(t)\cdot(t-1)^{-2},$$
	(see \cite[Section 2.3 and Lemma 7.7]{Liu_profinite_almost_rigidity}, 
	for example).
	
	Under a profinite isomorphism setting $(M_A,M_B,\Psi)$ 
	of pseudo-Anosov mapping tori (Convention \ref{profinite_isomorphism_setting_pA}),
	we obtain a similar profinite isomorphism 
	setting of the covering mapping tori
	$(M'_A,M'_B,\Psi')$,
	associated to any $\Psi$--corresponding pair of finite quotients
	$\pi_A\to \Gamma$ and $\pi_B\to \Gamma$.
	(Being $\Psi$--corresponding means 
	that the completion homomorphism $\widehat{\pi}_A\to\Gamma$
	is the $\Psi$--pullback of $\widehat{\pi}_B\to \Gamma$;
	so $M'_A$ and $M'_B$ are the finite covers corresponding to 
	the quotient kernels $\pi'_A$ and $\pi'_B$,
	and $\Psi'\colon\widehat{\pi}'_A\to \widehat{\pi}'_B$ 
	is the restriction of $\Psi$ to their profinite completions.)
	The covering setting can be summarized with a similar diagram as follows:
	\begin{equation}\label{covering_Sigma_mu_diagram}
	\xymatrix{
	\{1\} \ar[r] & \widehat{\Sigma}'_A \ar[rr] \ar[d]_{\cong}^{\Psi'|} 
	& & \widehat{\pi}'_A \ar[rr]^{t^{\widehat{\phi}'_A}} \ar[d]_{\cong}^{\Psi'}
	& & t^{\widehat{\Integral}} \ar[r] \ar[d]^{\mathrm{pwr}_{\mu'}}_{\cong} & \{1\} \\
	\{1\} \ar[r] & \widehat{\Sigma}'_B \ar[rr] & & \widehat{\pi}'_B \ar[rr]^{t^{\widehat{\phi}'_B}}
	& & t^{\widehat{\Integral}} \ar[r] & \{1\}
	}
	\end{equation}
	We can identify $\mu'\in \widehat{\Integral}^\times$ 
	in the diagram (\ref{covering_Sigma_mu_diagram}),
	with $\mu\in\widehat{\Integral}^\times$ in the diagram (\ref{Sigma_mu_diagram}),
	because they are related via the following parallel commutative diagrams:
	$$
	\xymatrix{
	\widehat{\pi}'_A \ar[rr]^{t^{\widehat{\phi}'_A}} \ar[d]_{\mathrm{incl}}
	& & t^{\widehat{\Integral}} \ar[d]^{\mathrm{pwr}_{k}} 
	& & &
	\widehat{\pi}'_B \ar[rr]^{t^{\widehat{\phi}'_B}} \ar[d]_{\mathrm{incl}}
	& & t^{\widehat{\Integral}} \ar[d]^{\mathrm{pwr}_{k}} 
	\\
	\widehat{\pi}_A \ar[rr]^{t^{\widehat{\phi}_A}}
	& & t^{\widehat{\Integral}}
	& & &
	\widehat{\pi}_B \ar[rr]^{t^{\widehat{\phi}_B}}
	& & t^{\widehat{\Integral}}
	}	
	$$	
	where $k=[M_A':M_A]/[S_A':S_A]=[M_B':M_B]/[S_B':S_B]$.
	
	By \cite[Theorem 7.2]{Liu_profinite_almost_rigidity},
	applied with the trivial representation $\Gamma\to\mathrm{GL}(1,\Integral)$,
	the Reidemeister torsions of
	profinitely corresponding pairs of pseudo-Anosov mapping tori are equal.
	We obtain 
	$$\tau^{\phi'_A}_{M'_A}\doteq\tau^{\phi'_B}_{M'_B},$$
	and hence 
	$$\Delta^{\phi'_A}_{M'_A}\doteq\Delta^{\phi'_B}_{M'_B}.$$
	(Note that our notation $\phi'_A\in H^1(M'_A;\Integral)$ corresponds to the notation $\psi'_A\in H^1(M'_A;\Integral)$ 
	in \cite{Liu_profinite_almost_rigidity}.)
	We represent both sides with a common monic reciprocal polynomial $P(t)\in\Integral[t]$.
	Moreover, there is an equality of the principal ideals in $\llbracket \widehat{\Integral}t^{\widehat{\Integral}}\rrbracket$:
	$$(P(t^{\mu}))=(P(t)),$$
	(see \cite[Lemmas 7.7 and 7.8]{Liu_profinite_almost_rigidity}).
	
	We apply the above results to some covering setting
	$(M'_A,M'_B,\Psi')$,
	where $P(t)$ has a root in $\Complex$ outside the unit circle.
	Then, by Lemma \ref{mu_square},
	we see that $\mu\in\widehat{\Integral}^\times$ is multiplicative $2$--torsion,
	as asserted.
\end{proof}

\section{Representations in quaternions}\label{Sec-rep_qa}
In this section, we introduce some working terminology regarding 
group representations in an abstract quaternion algebra.
Most of the materials below are well-known in the case of $\mathrm{SL}(2,\Complex)$--representations.
The main purpose of this section is 
to justify the formulation 
of results in Sections \ref{Sec-profin_corresp_rep} and \ref{Sec-profin_invar_vol}.



\begin{definition}\label{qarep_def}
Let $F$ be any (abstract) field of characteristic $0$,
and $\mathscr{A}$ be a quaternion algebra over $F$.
For any group $\pi$, 
we call any group homomorphism
$\rho\colon \pi\to \mathrm{SL}(1,\mathscr{A})$
a (\emph{special linear} or \emph{unimodular}) \emph{representation} 
of $\pi$ in $\mathscr{A}$.
Here, $\mathrm{SL}(1,\mathscr{A})$
denotes the normal subgroup of $\mathrm{GL}(1,\mathscr{A})=\mathscr{A}^\times$
which consists of the quaternions of norm $1$.
The \emph{character} of $\rho$ 
refers to the $\pi$--conjugacy invariant function 
$\chi_\rho\colon \pi\to F$,
defined as
$\chi_\rho(g)=\mathrm{Tr}_{\mathscr{A}/F}(\rho(g))$
for all $g\in\pi$.
\end{definition}

%
%

\begin{lemma}\label{irr_qarep_justification}
	With $\mathscr{A}/F$ as in Definition \ref{qarep_def},
	let $\pi$ be a group and
	$\rho\colon\pi\to\mathrm{SL}(1,\mathscr{A})$ be a representation.
	Then the following conditions are all equivalent.
	\begin{enumerate}
	\item 
	For some (hence any) field extension $E$ of $F$
	over which $\mathscr{A}\otimes_FE$ 
	splits as a matrix algebra $\mathrm{Mat}(2,E)$,
	the induced representation $\rho_E\colon \pi\to \mathrm{SL}(2,E)$ 
	is irreducible over $E$,
	that is, 
	$\rho_E$ has no proper $E$--linear invariant subspace in ${E^{2}}$ other than $\{0\}$.
	\item 
	The image of $\rho$ generates $\mathscr{A}$ as an algebra over $F$.
	\item
	The inequality $\chi_\rho([g,h])\neq2$ holds
	for some $g,h\in\pi$,
	where $[g,h]=ghg^{-1}h^{-1}$ denotes the commutator.
	\end{enumerate}
\end{lemma}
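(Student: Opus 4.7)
The plan is to establish a cycle of implications $(2)\Rightarrow(1)\Rightarrow(2)$ and $(3)\Rightarrow(1)\Rightarrow(3)$. Since conditions (2) and (3) are intrinsic to $F$ and make no reference to a splitting field $E$, their equivalence with (1) will automatically justify the ``hence any'' clause in the formulation of (1).

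For $(2)\Rightarrow(1)$, I would argue that if $\rho(\pi)$ generates $\mathscr{A}$ as an $F$-algebra, then after base change to any splitting field $E$, the image $\rho_E(\pi)$ generates $\mathscr{A}\otimes_F E=\mathrm{Mat}(2,E)$ as an $E$-algebra; the full matrix algebra acts $E$-irreducibly on $E^{2}$, so $\rho_E$ is irreducible. For $(1)\Rightarrow(2)$, take $E=\overline{F}$ an algebraic closure (which splits $\mathscr{A}$) and invoke Burnside's density theorem: the irreducibility of $\rho_E$ forces the $E$-linear span of $\rho_E(\pi)$ to be all of $\mathrm{Mat}(2,E)$. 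Since $\rho(\pi)$ is already a multiplicative subgroup of $\mathscr{A}^{\times}$, its $F$-linear span is an $F$-subalgebra $\mathscr{B}\subseteq\mathscr{A}$, and the equation $\mathscr{B}\otimes_F E=\mathscr{A}\otimes_F E$ forces $\mathscr{B}=\mathscr{A}$ by comparing $F$-dimensions.

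For $(3)\Rightarrow(1)$, I would argue contrapositively: if $\rho_E$ is reducible over some algebraically closed splitting $E$, then after conjugation the image lies in an upper triangular Borel of $\mathrm{SL}(2,E)$, and commutators of upper triangular $\mathrm{SL}(2)$-matrices are upper unitriangular, hence have trace~$2$. So $\chi_{\rho}([g,h])=2$ identically, negating (3).

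The main obstacle is the direction $(1)\Rightarrow(3)$. Assuming absolute irreducibility of $\rho_E$ over $E=\overline{F}$, suppose for contradiction that $\mathrm{Tr}([A,B])=2$ for every $A,B\in\rho_E(\pi)$. Pick a non-central $A_0$ in the image (such exists, because otherwise the image lies in $\{\pm I\}$, which is reducible). Using the commutator trace identity of Proposition~\ref{trace_relations}(2), combined with a direct computation after placing $A_0$ in rational canonical form, one derives the classical Fricke criterion: $\mathrm{Tr}([A_0,B])=2$ forces $B$ either to pointwise fix the fixed set $F(A_0)\subset\mathbb{P}^{1}(E)$, or, in the case $|F(A_0)|=2$, to swap its two points. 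The swap case is ruled out by a direct eigenvalue calculation: if $A_0$ has eigenvalues $\lambda,\lambda^{-1}$ with $\lambda\neq\pm 1$ and $B$ is anti-diagonal in the eigenbasis of $A_0$, then $[A_0,B]=\mathrm{diag}(\lambda^{2},\lambda^{-2})$, with trace $\lambda^{2}+\lambda^{-2}\neq 2$. Hence every element of the image pointwise fixes a point of $F(A_0)$. If $|F(A_0)|=1$, everyone shares this fixed point, contradicting irreducibility. If $F(A_0)=\{p,q\}$, the image decomposes as a union $G_p\cup G_q$ of two stabilizer subgroups, and a union of two subgroups being itself a subgroup forces one to contain the other; so the image fixes $p$ or $q$, again contradicting irreducibility.
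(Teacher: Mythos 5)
Your proposal is correct, and it takes a visibly different route from the paper. The paper treats the equivalence $(1)\Leftrightarrow(3)$ as well known and simply cites \cite[Lemma 1.2.1 and Corollary 1.2.2]{CS_rep_var}; the only part it proves directly is $(1)\Rightarrow(2)$, via an elementary case analysis showing that any proper $E$-subalgebra of $\mathrm{Mat}(2,E)$ (for $E$ algebraically closed, char $0$) preserves a line in $E^{2}$, which is just the $2\times2$ instance of Burnside's density theorem proved by hand. You instead invoke Burnside as a black box for $(1)\Rightarrow(2)$, which is cleaner and more general, and you supply a self-contained proof of $(1)\Leftrightarrow(3)$ in place of the Culler--Shalen citation. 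Both approaches are complete; yours trades one piece of literature dependence for another (Culler--Shalen replaced by Burnside) and makes the $(1)\Leftrightarrow(3)$ argument explicit.

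One correction is needed in your writeup of $(1)\Rightarrow(3)$. The intermediate claim you attribute to the Fricke criterion is misstated. For a non-central diagonalizable $A_0=\mathrm{diag}(\lambda,\lambda^{-1})$ and $B=\left(\begin{smallmatrix}a&b\\c&d\end{smallmatrix}\right)\in\mathrm{SL}_2$, one computes $\mathrm{Tr}([A_0,B])=2+bc\,(2-\lambda^{2}-\lambda^{-2})$, so $\mathrm{Tr}([A_0,B])=2$ iff $bc=0$, i.e.\ iff $B$ is upper or lower triangular, i.e.\ iff $B$ fixes \emph{at least one} of the two fixed points of $A_0$. Your stated dichotomy (``$B$ fixes $F(A_0)$ pointwise or swaps the two points'') is therefore not the right one: $B$ may fix exactly one of $p,q$ without being diagonal and without swapping, and this possibility is not covered by either branch. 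Fortunately the conclusion you draw afterward (``every element of the image fixes a point of $F(A_0)$'') is exactly the correct weaker statement, and your union-of-two-subgroups argument is calibrated for that weaker statement, so the overall logic of the paragraph still closes. But as written the dichotomy step is wrong, and were you to take it literally you would already conclude the image is diagonal, bypassing the two-subgroups lemma entirely; the mismatch should be fixed. You should also note explicitly that the parabolic case (trace $\pm2$, single fixed point) is covered by your $|F(A_0)|=1$ branch, so the eigenvalue computation for the ``swap'' subcase only needs to be carried out when $A_0$ is semisimple with $\lambda\neq\pm1$.
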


\begin{proof}
	The equivalence (1) $\Leftrightarrow$ (3) is well-known.
	We refer to \cite[Lemma 1.2.1 and Corollary 1.2.2]{CS_rep_var} for a standard proof.
	The implication (2) $\Rightarrow$ (1) is obvious (see Example \ref{qa_example}).
	It remains to prove the implication (1) $\Rightarrow$ (2),
	or essentially,
	the following (well-known) claim in linear algebra:
	If $E$ is an algebraically closed field of characteristic $0$,
	then any proper $E$--subalgebra $B$ of $\mathrm{Mat}(2,E)$ 
	preserves some $1$--dimensional $E$--linear subspace in ${E^{2}}$.
	
	When $\mathrm{dim}_EB<3$, 
	the claim is obvious, since $B$ is central, and hence commutative.
	When $\mathrm{dim}_EB=3$,
	we take a basis $I,X,Y\in\mathrm{Mat}(2,E)$ of $B$ over $E$,
	where $I$ is the identity matrix.
	Possibly after a generic modification of $X$ within $B$,
	we may require $\mathrm{det}(X)\neq0$ and $\mathrm{tr}(X)^2-4\cdot\mathrm{det}(X)\neq0$.
	So, possibly after conjugating $B$ in $\mathrm{Mat}(2,E)$,
	we may assume that $X$ is diagonal with distinct diagonal entries.
	It follows that $Y$ has some nonzero off-diagonal entries.
	If the upper and lower off-diagonal entries of $Y$ 
	were both nonzero,
	we observe that $XY$ and $Y$ would be linearly independent.
	Moreover, the matrices $I,X,Y,XY$ would span $\mathrm{Mat}(2,E)$ over $E$,
	contradicting the assumption $\mathrm{dim}_EB=3$.
	This means $Y$ is either upper triangular or lower triangular.
	Therefore,
	$B$ is conjugate to a subalgebra of triangular matrices in $\mathrm{Mat}(2,E)$.
	It preserves a $1$--dimensional $E$--linear subspace in ${E^{2}}$, as claimed.
\end{proof}

\begin{definition}\label{irr_qarep_def}
	With $\mathscr{A}/F$ as in Definition \ref{qarep_def},
	we say that a representation $\rho\colon\pi\to\mathrm{SL}(1,\mathscr{A})$
	is \emph{irreducible} 
	if $\rho$ satisfies any of the equivalent conditions
	as in Lemma \ref{irr_qarep_justification}.
\end{definition}

\begin{lemma}\label{irr_qachi}
	With $\mathscr{A}/F$ as in Definition \ref{qarep_def},
	let $\pi$ be a finitely generated group and
	$\rho\colon\pi\to\mathrm{SL}(1,\mathscr{A})$ be an irreducible representation.
	If $\tilde{\rho}\colon \pi\to\mathrm{SL}(1,\mathscr{A})$ is a representation 
	with character $\chi_{\tilde{\rho}}=\chi_{\rho}$ on $\pi$,
	then $\tilde{\rho}$ is conjugate to $\rho$ in $\mathrm{GL}(1,\mathscr{A})=\mathscr{A}^\times$,
	namely, $\tilde{\rho}=\tau\circ\rho$ for some inner automorphism $\tau$ of $\mathscr{A}^\times$.
	The converse also holds true.
\end{lemma}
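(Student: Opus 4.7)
The plan is to reduce to the classical fact that irreducible $\mathrm{SL}(2,E)$--representations over a field $E$ of characteristic $0$ are conjugacy-determined by their characters (Proposition 1.5.2 of \cite{CS_rep_var}), and then descend the conjugating element back into $\mathscr{A}^\times$. To begin, fix an algebraic closure $\bar{F}$ of $F$, so that $\mathscr{A}\otimes_F\bar{F}\cong\mathrm{Mat}(2,\bar{F})$ by Example \ref{qa_example}(1). Let $\rho_{\bar{F}},\tilde{\rho}_{\bar{F}}\colon\pi\to\mathrm{SL}(2,\bar{F})$ be the induced representations. By Lemma \ref{irr_qarep_justification}, condition (1) applied to $\rho$ shows $\rho_{\bar F}$ is irreducible, while condition (3) applied using the hypothesis $\chi_{\tilde\rho}=\chi_\rho$ also gives irreducibility of $\tilde\rho$, hence of $\tilde{\rho}_{\bar F}$. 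Since $\chi_{\rho_{\bar F}}=\chi_{\tilde{\rho}_{\bar F}}$, the classical result gives some $\alpha_{\bar F}\in\mathrm{GL}(2,\bar F)$ with $\tilde{\rho}_{\bar F}=\alpha_{\bar F}\,\rho_{\bar F}\,\alpha_{\bar F}^{-1}$.

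Next I will descend. Consider the $F$--vector subspace
\[
V=\bigl\{\alpha\in\mathscr{A}\,:\,\alpha\,\rho(g)=\tilde\rho(g)\,\alpha\text{ for all }g\in\pi\bigr\}
\]
of $\mathscr{A}$. The natural map $V\otimes_F\bar{F}\to\{\beta\in\mathscr{A}\otimes_F\bar F:\beta\rho_{\bar F}(g)=\tilde\rho_{\bar F}(g)\beta\}$ is an isomorphism (tensoring with a field preserves kernels), and Schur's lemma applied to the irreducible, isomorphic $\mathrm{SL}(2,\bar F)$--representations $\rho_{\bar F},\tilde\rho_{\bar F}$ shows that the right-hand space has $\bar F$--dimension $1$. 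Hence $\dim_F V=1$; pick any nonzero $\alpha\in V$. If $\mathscr{A}$ is a division algebra then $\alpha\in\mathscr{A}^\times$ automatically. If instead $\mathscr{A}\cong\mathrm{Mat}(2,F)$ and $\alpha$ were not invertible, its kernel $\ker(\alpha)\subset F^2$ would be a $1$--dimensional $F$--subspace, and the identity $\alpha\rho(g)=\tilde\rho(g)\alpha$ immediately forces $\rho(g)\ker(\alpha)\subset\ker(\alpha)$ for every $g$, contradicting the irreducibility of $\rho$ via Lemma \ref{irr_qarep_justification}(1). Thus $\alpha\in\mathscr{A}^\times$, and conjugation $\tau(x)=\alpha x\alpha^{-1}$ is an inner automorphism of $\mathscr{A}^\times$ with $\tilde\rho=\tau\circ\rho$. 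Since the reduced norm is conjugation-invariant, $\tau$ carries $\mathrm{SL}(1,\mathscr{A})$ to itself, as required.

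The converse is immediate from the conjugacy-invariance of the reduced trace: if $\tilde\rho(g)=\alpha\rho(g)\alpha^{-1}$, then $\chi_{\tilde\rho}(g)=\mathrm{Tr}_{\mathscr{A}/F}(\alpha\rho(g)\alpha^{-1})=\mathrm{Tr}_{\mathscr{A}/F}(\rho(g))=\chi_\rho(g)$ by Proposition \ref{trace_relations}(2). The main subtlety in the forward direction is the descent from $\bar F$ to $F$, that is, guaranteeing that the $1$--dimensional space of intertwiners, which exists a priori only over $\bar F$, is defined over $F$ and contains an invertible element of $\mathscr{A}$; this is precisely what the flat base change computation of $V$ combined with the irreducibility-based ruling-out of rank-$1$ intertwiners accomplishes.
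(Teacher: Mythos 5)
Your proof is correct and follows the same overall strategy as the paper's: reduce to the classical conjugacy-by-characters result over a splitting field, use flat base change to show the $F$--rational intertwiner space is $1$--dimensional, and then verify a nonzero intertwiner is invertible. There are two technical variations worth noting. First, you split $\mathscr{A}$ over an algebraic closure $\bar F$ of $F$, whereas the paper first reduces to the case that $F$ is finitely generated over $\Rational$ and then embeds $F$ into $\Complex$; your route avoids that reduction step (though it requires the reader to accept the character-determines-irreducible-representation fact over arbitrary algebraically closed fields of characteristic $0$, rather than just $\Complex$ as literally stated in the cited reference — a harmless strengthening, since the proof there is field-independent). Second, to see that the nonzero $\alpha\in V$ is invertible, you split into the division-algebra case (automatic) and the $\mathrm{Mat}(2,F)$ case (where a rank-one $\alpha$ would have $\rho$--invariant kernel, contradicting irreducibility), whereas the paper works uniformly by restricting the reduced norm to the $1$--dimensional $F$--space $V$: since the norm extends to a nonvanishing quadratic form on $V\otimes_F\Complex$ (which contains the $\mathrm{GL}(2,\Complex)$--conjugator), it cannot vanish identically on $V$. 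Both invertibility arguments are valid; yours is more elementary case analysis, the paper's is slicker but leans on the quadratic-form structure of $\mathrm{Nr}$.
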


\begin{proof}
	We only prove the main assertion, as the converse is obvious.
	Since $\pi$ is finitely generated, 
	we may assume $F$ finitely generated over $\Rational$.
	We fix a field embedding of $F$ into $\Complex$, 
	and an isomorphism $\mathscr{A}\otimes_F\Complex\cong\mathrm{Mat}(2,\Complex)$.
	Since the induced irreducible $\mathrm{SL}(2,\Complex)$--representations
	$\rho_\Complex$ and $\tilde{\rho}_\Complex$ of the finitely generated group $\pi$
	have identical characters,
	they are conjugate in $\mathrm{GL}(2,\Complex)$,
	and indeed, conjugate in $\mathrm{SL}(2,\Complex)$
	\cite[Proposition 1.5.2]{CS_rep_var}.
	
	Take a finite generating set $g_1,\cdots,g_r$ of $\pi$.
	We set up $r$ equations in $\mathscr{A}$ with an unknown $T$,
	namely, $T\cdot\rho(g_i)=\tilde{\rho}(g_i)\cdot T$	for all $i\in\{1,\cdots,r\}$.
	It suffices to show that there exists some solution $T\in\mathscr{A}$
	with $\mathrm{Nr}_{\mathscr{A}/F}(T)\neq0$.
	To this end,
	we take a basis $X_0,X_1,X_2,X_3$ of $\mathscr{A}$ over $F$,
	and write $T=u_0\cdot X_0+u_1\cdot X_1+u_2\cdot X_2+u_3\cdot X_3$.
	Then the above equations becomes a linear system of $4r$ equations over $F$
	in the unknowns $u_0,u_1,u_2,u_3$.
	The $F$--solution space $V$ of these equations is a linear subspace in $\mathscr{A}$ over $F$,
	and the $\Complex$--solution space is the scalar extension $V\otimes_F\Complex$ in $\mathrm{Mat}(2,\Complex)$.
	Moreover, the norm $\mathrm{Nr}_{\mathscr{A}/F}$ on $\mathscr{A}$
	restricts to a $F$--quadratic form on $V$, 
	which agrees with the determinant function on $\mathrm{Mat}(2,\Complex)$ under our identification.	
	Since $\rho_\Complex$ and $\tilde{\rho}_\Complex$ are conjugate in $\mathrm{GL}(2,\Complex)$,
	we infer that $V$ is nontrivial and $\mathrm{Nr}_{\mathscr{A}/F}$ is nonvanishing on $V$.
	This means there exists some solution $T$ in $\mathrm{GL}(1,\mathscr{A})=\mathscr{A}^\times$
	as desired.
\end{proof}

%

Recall that the \emph{$2$--Frattini subgroup}
of any group $\pi$ is by definition the characteristic subgroup
generated by the commutators and the squares of elements in $\pi$.
In fact, the squares of elements already form a generating set.
(This is not true in general for Frattini subgroups associated with other primes $p$.)
In this paper, we denote the $2$--Frattini subgroup of any group $\pi$ as $\mathrm{Fratt}_2(\pi)$.

\begin{lemma}\label{Zd_qarep_justification}
	With $\mathscr{A}/F$ as in Definition \ref{qarep_def},
	let $\pi$ be a group
	and $\rho\colon\pi\to\mathrm{SL}(1,\mathscr{A})$ be a representation.
	Then the following conditions are all equivalent.
	\begin{enumerate}
	\item 
	For some (hence any) algebraically closed field extension $E$ of $F$
	over which $\mathscr{A}\otimes_FE$ 
	splits as a matrix algebra $\mathrm{Mat}(2,E)$,
	the induced representation $\rho_E\colon \pi\to \mathrm{SL}(2,E)$ 
	is Zariski dense,
	that is, the image of $\rho$ is dense 
	in the linear algebraic group $\mathrm{SL}(2,E)$ over $E$
	with respect to the Zariski topology.
	\item For every finite-index normal subgroup $\pi'$ of $\pi$,
	the representation $\rho'\colon\pi'\to \mathrm{SL}(1,\mathscr{A})$
	restricting $\rho$ to $\pi'$ is irreducible.
	\item 
	The image of $\rho$ is infinite,
	and
	the representation $\rho^{\natural}\colon\mathrm{Fratt}_2(\pi)\to \mathrm{SL}(1,\mathscr{A})$
	restricting $\rho$ to the $2$--Frattini subgroup	is irreducible.
	\end{enumerate}
\end{lemma}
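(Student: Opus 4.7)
The plan is to fix once and for all an algebraically closed field extension $E/F$ over which $\mathscr{A} \otimes_F E \cong \mathrm{Mat}(2,E)$, and to reformulate the three conditions in terms of the induced representation $\rho_E\colon \pi \to \mathrm{SL}(2,E)$. The ``for some (hence any)'' clause in (1) is standard, as Zariski density in $\mathrm{SL}(2)$ is preserved under further scalar extension. The main inputs are Lie--Kolchin (every proper Zariski-connected closed subgroup of $\mathrm{SL}(2,E)$ is solvable and hence fixes a line in $E^2$) together with the elementary observation that $\mathrm{Fratt}_2(\pi)$ is contained in every index-$2$ subgroup of $\pi$, since squares and commutators vanish modulo $2$.

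For (1) $\Leftrightarrow$ (2): if $\pi' \triangleleft \pi$ has finite index and $\rho_E(\pi)$ is Zariski dense, then the Zariski closure of $\rho_E(\pi')$ has finite index in the Zariski-connected group $\mathrm{SL}(2,E)$, hence equals $\mathrm{SL}(2,E)$, so $\rho_E(\pi')$ is Zariski dense and in particular irreducible. Conversely, if $H = \overline{\rho_E(\pi)}^{\mathrm{Zar}} \neq \mathrm{SL}(2,E)$, then $\pi' = \rho_E^{-1}(H^0)$ is a finite-index normal subgroup of $\pi$ (since $H^0 \triangleleft H$ with finite $H/H^0$) on which $\rho_E$ lands in the solvable or trivial group $H^0$, so $\rho|_{\pi'}$ is reducible by Lie--Kolchin.

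For (3) $\Rightarrow$ (2): fix a finite-index normal $\pi' \triangleleft \pi$ and suppose for contradiction that $\rho_E(\pi')$ preserves some line $L_0 \subset E^2$. Normality of $\pi'$ forces the set $S = \{\rho_E(g) L_0 \colon g \in \pi\}$ to consist of $\rho_E(\pi')$-invariant lines, with $|S| \leq [\pi:\pi'] < \infty$. If $|S|=1$, then $\rho$ itself is reducible, contradicting that irreducibility of $\rho^\natural$ forces irreducibility of $\rho$. If $|S|=2$, the kernel of the $\pi$-action on $S$ has index at most $2$ in $\pi$ and therefore contains $\mathrm{Fratt}_2(\pi)$, so $\rho^\natural$ preserves $L_0$, contradicting its irreducibility. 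If $|S|\geq 3$, the kernel of the action has finite index in $\pi$ and its $\rho_E$-image fixes at least three points in $\mathbf{P}^1(E)$, hence lies in $\{\pm I\}$; this forces $\rho(\pi)$ finite, contradicting (3). For the converse (2) $\Rightarrow$ (3): $\rho(\pi)$ must be infinite, for otherwise $\ker(\rho)$ would be a finite-index normal subgroup on which $\rho$ is trivial and hence reducible; and $\rho^\natural$ is irreducible because Zariski density of $\rho_E(\pi)$ in the perfect group $\mathrm{SL}(2,E)$ (which equals its own $2$-Frattini subgroup, since its abelianization modulo $2$ is trivial) propagates to Zariski density of the subgroup generated by squares and commutators, that is, of $\rho_E(\mathrm{Fratt}_2(\pi)) = \mathrm{Fratt}_2(\rho_E(\pi))$, which is then in particular irreducible.

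The main delicate point is the $|S|=2$ branch of (3) $\Rightarrow$ (2), which is precisely where the prime $2$ in $\mathrm{Fratt}_2$ becomes essential: replacing it by a $p$-Frattini subgroup for any other prime $p$ would fail to guarantee containment in every index-$2$ subgroup, and the argument would collapse. Everything else reduces to standard algebraic-group facts for $\mathrm{SL}(2)$ together with the elementary permutation action on a finite orbit of invariant lines.
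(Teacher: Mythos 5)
Your proof is correct and follows essentially the same approach as the paper: both reduce to the structure of Zariski-closed subgroups of $\mathrm{SL}(2,E)$, split into cases according to the size of a finite $\pi$-invariant set of $\rho_E(\pi')$-invariant lines, and use the containment of $\mathrm{Fratt}_2(\pi)$ in every index-$2$ subgroup in the size-two case. The only cosmetic differences are that you invoke Lie--Kolchin where the paper cites Lemma~\ref{irr_qarep_justification}, you work with the $\pi$-orbit of a single invariant line rather than the full fixed-point set of $\rho_E(\pi')$ on $E\mathbf{P}^1$, and you phrase $(2)\Rightarrow(3)$ via ``propagation'' of Zariski density whereas the paper observes that the closure of $\rho_E(\mathrm{Fratt}_2(\pi))$ is an infinite normal closed subgroup of $\mathrm{SL}(2,E)$; these are equivalent routes to the same conclusion.
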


%
%

\begin{proof}
	For any algebraically closed field $E$,
	the Zariski closure $G$ of the image of $\rho(\pi)$
	is a Zariski closed subgroup in $\mathrm{SL}(2,E)$ \cite[Chapter II, Proposition 7.4.A]{Humphreys_book_LAG}.
	If $\rho'\colon \pi'\to \mathrm{SL}(1,\mathscr{A})$ is reducible
	for some finite-index subgroup $\pi'$ of $\pi$,
	the Zariski closure $G'$ of $\rho'(\pi')$ in $\mathrm{SL}(2,E)$
	can be conjugated into the upper-triangular subgroup by some element of $\mathrm{SL}(2,E)$
	(Lemma \ref{irr_qarep_justification}),
	while $G'$ has finite-index in $G$.
	Then $G$ is not Zariski dense as it has dimension at most $2$ in $\mathrm{SL}(2,E)$.
	Conversely,
	if $G$ is not Zariski dense in $\mathrm{SL}(2,E)$,
	the connected component $G^o$ 
	(that is, the irreducible component of $G$ as an algebraic variety)
	has finite index in $G$.
	Moreover,
	the Lie algebra $\mathfrak{g}$ of $G$ has dimension at most $2$ 
	over $E$ \cite[Chapter III, Theorem 9.1]{Humphreys_book_LAG}, 
	so $\mathfrak{g}$ must be solvable 
	(see \cite[Chapter I, Section 1.4]{Humphreys_book_LA},
	and can be conjugated into the upper-triangular Lie subalgebra $\mathfrak{sl}(2,E)$,
	since $E$ is algebraically closed of characteristic $0$
	\cite[Chapter II, Section 4.1]{Humphreys_book_LA}.
	In turn, this implies that $G^o$ can be conjugated into the upper-triangular subgroup
	of $\mathrm{SL}(2,E)$,
	within $\mathrm{GL}(2,E)$ and hence also within $\mathrm{SL}(2,E)$
	\cite[Chapter V, Theorem 13.1]{Humphreys_book_LAG}.
	This means that
	the restricted representation $\rho'\colon\pi'\to \mathrm{SL}(1,\mathscr{A})$ 
	is reducible on the finite-index subgroup $\pi'=G^0\cap\pi$ of $\pi$ (Lemma \ref{irr_qarep_justification}).
	This proves the equivalence (1) $\Leftrightarrow$ (2).

	If $\rho_E$ is Zarisk dense, 
	the Zariski closure	of $\rho_E(\mathrm{Fratt}_2(\pi))$ 
	is an infinite, normal, Zariski closed subgroup of $\mathrm{SL}(2,E)$,
	which has to be $\mathrm{SL}(2,E)$ itself.
	It follows that $\rho$ has infinite image and	$\rho^\natural$ is irreducible.
	On the other hand, if $\rho_E$ is not Zariski dense,
	there exists a finite-index normal subgroup $\pi'$,
	such that the restricted representation 
	$\rho'\colon\pi'\to\mathrm{SL}(1,\mathscr{A})$ is reducible.
	Therefore, $\rho_E(\pi')$ fixes some points on the projective line 
	$E\mathbf{P}^1$ over $E$,
	on which $\mathrm{SL}(2,E)$ acts by projective $E$--linear transformations
	(Lemma \ref{irr_qarep_justification}).
	When $\rho_E(\pi')$ fixes only one or two points on $E\mathbf{P}^1$,
	we observe that $\rho_E(\pi)$ stabilizes the fixed point set.
	In this case, $\rho^{\natural}$ is reducible,
	since $\rho_E(\mathrm{Fratt}_2(\pi))$ fixes those fixed points of $\rho_E(\pi')$
	on $E\mathbf{P}^1$.
	When $\rho_E(\pi')$ fixes at least three points on $E\mathbf{P}^1$,
	it fixes $E\mathbf{P}^1$, 
	so $\rho_E(\pi')$ is contained in the center $\{\pm I\}$ of $\mathrm{SL}(2,E)$.
	In this case, $\rho$ has finite image,
	since $\pi'$ has finite index in $\pi$.
	This proves the equivalence (2) $\Leftrightarrow$ (3).
\end{proof}

\begin{remark}\label{Zd_qarep_justification_remark}
	Considering $\mathrm{SL}(1,\mathscr{A})$ as an algebraic group over $F$,
	one may actually replace $\mathrm{SL}(2,E)$ with $\mathrm{SL}(1,\mathscr{A})$
	in the condition (1).	
	When $\pi$ is finitely generated, one may remove the requirement of being normal
	in the condition (2), 
	because any finite-index subgroup of a finitely generated group
	contains a finite-index normal subgroup.
\end{remark}

\begin{definition}\label{Zd_qarep_def}
	With $\mathscr{A}/F$ as in Definition \ref{qarep_def},
	we say that a representation $\rho\colon\pi\to\mathrm{SL}(1,\mathscr{A})$
	is \emph{Zariski dense} 
	if $\rho$ satisfies any of the equivalent conditions
	as in Lemma \ref{Zd_qarep_justification}.
\end{definition}

\begin{definition}\label{invar_qa_def}
	With $\mathscr{A}/F$ as in Definition \ref{qarep_def},
	let $\pi$ be a group, and
	$\rho\colon\pi\to\mathrm{SL}(1,\mathscr{A})$ 
	be a representation.
	\begin{enumerate}
	\item When $\rho$ is irreducible,
	the \emph{trace field} of $\rho$ is defined as 
	the subfield of $F$ 
	generated by $\{\chi_\rho(g)\in F\colon g\in\pi\}$ over $\Rational$,
	and the \emph{quaternion algebra} of $\rho$ is defined as
	the quaternion algebra of $\mathscr{A}$ 
	generated by $\{\rho(g)\in\mathscr{A}\colon g\in\pi\}$
	over the trace field.
	\item
	When $\rho$ is Zariski dense,
	the \emph{invariant trace field} of $\rho$ is defined as 
	the subfield of $F$ 
	generated by $\{\chi_\rho(g)\in F\colon g\in\mathrm{Fratt}_2(\pi)\}$ over $\Rational$,
	and the \emph{invariant quaternion algebra} of $\rho$ is defined as
	the subalgebra of $\mathscr{A}$
	generated by $\{\rho(g)\in\mathscr{A}\colon g\in \mathrm{Fratt}_2(\pi)\}$ 
	over the invariant trace field.
	\end{enumerate}
\end{definition}

\begin{remark}\label{invar_qa_remark}
	Denote by $\mathrm{PSL}(1,\mathscr{A})$ the quotient of $\mathrm{SL}(1,\mathscr{A})$
	by the center $\{\pm\mathbf{1}\}$.
	For any \emph{projective special linear representation} $\rho^{\flat}\colon \pi\to\mathrm{PSL}(1,\mathscr{A})$,
	one may define the \emph{adjoint trace field} of $\rho^{\flat}$
	as the subfield of $F$ generated over $\Rational$ by
	the traces of the adjoint representations $\mathrm{Ad}(\rho^{\flat})\colon \pi\to \mathrm{Aut}_F(\mathfrak{sl}(1,\mathscr{A}))$,
	where $\mathfrak{sl}(1,\mathscr{A})$ can be identified as 
	the $3$--dimensional $F$--linear subspace of $\mathscr{A}$ 
	consisting of all the trace zero elements.
	When $\rho^{\flat}$
	comes from a representation $\rho\colon \pi\to \mathrm{SL}(1,\mathscr{A})$
	as the projectivization $\mathbf{P}(\rho)$,
	one obtains the relation $\mathrm{tr}_F(\mathrm{Ad}(\rho^{\flat})(g))=\chi_\rho(g)^2-1$ for all $g\in\pi$.
	Therefore, the adjoint trace field of any liftable $\mathrm{PSL}(1,\mathscr{A})$--representation
	coincides with the trace field of any of its $\mathrm{SL}(1,\mathscr{A})$--lifts
	restricted to the $2$--Frattini subgroup.
	For any algebraically closed field extension $E$ over $F$,
	we can regard 
	$\mathrm{PSL}(1,\mathscr{A}\otimes_FE)
	\cong \mathrm{PSL}(2,E) \cong \mathrm{SO}(3,E)$
	as an algebraic group over $E$.
	Observe that a subgroup of $\mathrm{PSL}(1,\mathscr{A}\otimes_FE)$
	is Zariski closed if and only if its preimage in 
	$\mathrm{SL}(1,\mathscr{A}\otimes_FE)$
	is Zariski closed.
	Therefore, 
	one may define a \emph{Zariski dense} representation
	$\rho^{\flat}\colon \pi\to\mathrm{PSL}(1,\mathscr{A})$
	by the property that 
	$\rho^{\flat}(\pi)$ is Zariski dense in $\mathrm{PSL}(1,\mathscr{A}\otimes_FE)$
	for some algebraically closed field extension $E$ over $F$.	
	In this case,
	the adjoint trace field of $\rho^{\flat}$ coincides
	with the invariant trace field of $\Pi$,
	where $\Pi$ denotes the full preimage of $\rho^{\flat}(\pi)$ 
	in $\mathrm{SL}(1,\mathscr{A})$.
	Therefore, one may define 
	the \emph{invariant quaternion algebra} of 
	a Zariski dense 
	$\mathrm{PSL}(1,\mathscr{A})$--representation $\rho^{\flat}$
	as the invariant quaternion algebra of $\Pi$,
	over the invariant trace field of $\Pi$.
	Compare \cite{Vinberg_field}.
\end{remark}

\begin{lemma}\label{virtual_Frattini_character}
	With $\mathscr{A}/F$ as in Definition \ref{qarep_def},
	let	$\pi$ is a finitely generated group
	and $\rho\colon\pi\to \mathrm{SL}(1,\mathscr{A})$ be 
	a Zariski dense representation.
	Then, the following conditions 
	on a representation $\tilde{\rho}\colon\pi\to \mathrm{SL}(1,\mathscr{A})$
	are all equivalent.
	\begin{enumerate}
	\item
	For some group homomorphism $\varepsilon\colon\pi\to\{\pm1\}$,
	$\tilde{\rho}$ is conjugate to $\varepsilon\cdot\rho$ in $\mathrm{GL}(1,\mathscr{A})$.
	\item
	For some group homomorphism $\varepsilon\colon\pi\to\{\pm1\}$,
	$\chi_{\tilde{\rho}}=\varepsilon\cdot\chi_{\rho}$ holds on $\pi$.	
	\item
	The equality $\chi_{\tilde{\rho}}=\chi_{\rho}$ holds on $\mathrm{Fratt}_2(\pi)$.	
	\item 
	For some (hence any) finite-index subgroup $\pi'$ of $\pi$,
	$\chi_{\tilde{\rho}}=\chi_{\rho}$ holds on $\mathrm{Fratt}_2(\pi')$.
	\end{enumerate}
\end{lemma}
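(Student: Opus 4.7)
The plan is to run the cycle $(1) \Rightarrow (2) \Rightarrow (3) \Rightarrow (4) \Rightarrow (1)$, reading $(3) \Rightarrow (4)$ as supplying the full ``for any finite-index $\pi'$'' form and $(4) \Rightarrow (1)$ as needing only the ``for some'' form, thereby also resolving the ``some (hence any)'' clause in $(4)$. The three preliminary implications are routine: $(1) \Rightarrow (2)$ follows from conjugation-invariance of $\mathrm{Tr}_{\mathscr{A}/F}$ together with centrality of the scalar $\varepsilon(g)$; $(2) \Rightarrow (3)$ follows because any homomorphism $\pi \to \{\pm 1\}$ is trivial on $\mathrm{Fratt}_2(\pi)$, since the target has exponent $2$ and $\mathrm{Fratt}_2(\pi)$ is generated by squares; and $(3) \Rightarrow (4)$ follows from the containment $\mathrm{Fratt}_2(\pi') \subseteq \mathrm{Fratt}_2(\pi)$ for every subgroup $\pi' \leq \pi$, since any square of an element of $\pi'$ is a square in $\pi$.

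The substantive step is $(4) \Rightarrow (1)$. Given a finite-index $\pi' \leq \pi$ with $\chi_{\tilde{\rho}} = \chi_{\rho}$ on $\mathrm{Fratt}_2(\pi')$, I will first replace $\pi'$ by its normal core $\pi'' = \bigcap_{g \in \pi} g \pi' g^{-1}$, which is still of finite index in $\pi$, and set $N = \mathrm{Fratt}_2(\pi'')$. Because $\mathrm{Fratt}_2$ is characteristic and $\pi''$ is normal in $\pi$, the subgroup $N$ is a finite-index normal subgroup of $\pi$, and from $\pi'' \leq \pi'$ one obtains $N \subseteq \mathrm{Fratt}_2(\pi')$, so $\chi_{\tilde{\rho}} = \chi_{\rho}$ on $N$ by hypothesis. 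Zariski denseness of $\rho$, together with Lemma \ref{Zd_qarep_justification}(2) and Remark \ref{Zd_qarep_justification_remark}, ensures that $\rho|_{N}$ is irreducible; Lemma \ref{irr_qachi} then produces $T \in \mathscr{A}^\times$ with $\tilde{\rho}(h) = T \rho(h) T^{-1}$ for every $h \in N$. Replacing $\tilde{\rho}$ by $T^{-1} \tilde{\rho} T$, I may assume $\tilde{\rho}|_{N} = \rho|_{N}$ outright.

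For any $g \in \pi$ and $h \in N$, normality gives $g h g^{-1} \in N$, and evaluating $\tilde{\rho}(g h g^{-1})$ two ways yields that $\rho(g)^{-1} \tilde{\rho}(g)$ commutes with every element of $\rho(N)$. By Lemma \ref{irr_qarep_justification}, $\rho(N)$ spans $\mathscr{A}$ as an $F$-algebra, so its centralizer in $\mathscr{A}^\times$ equals the center $F^\times \cdot \mathbf{1}$; intersecting with $\mathrm{SL}(1,\mathscr{A})$ leaves only $\{\pm \mathbf{1}\}$, because $\mathrm{Nr}_{\mathscr{A}/F}(\alpha \mathbf{1}) = \alpha^2$. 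Hence $\tilde{\rho}(g) = \varepsilon(g) \rho(g)$ for a unique $\varepsilon(g) \in \{\pm 1\}$, and matching $\tilde{\rho}(gh)$ against $\tilde{\rho}(g) \tilde{\rho}(h)$ forces $\varepsilon \colon \pi \to \{\pm 1\}$ to be a homomorphism. Undoing the $T$-conjugation delivers $(1)$.

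The hard part is precisely $(4) \Rightarrow (1)$, where two constraints must be reconciled at once: the subgroup on which the characters already agree must be normal in $\pi$, so that the Schur-style argument with $\rho(g)^{-1} \tilde{\rho}(g)$ sees $\rho(N)$ as an invariant target; and $\rho$ must remain irreducible on it, so that the centralizer collapses to the center. Taking $\mathrm{Fratt}_2$ of the normal core of $\pi'$ accomplishes both, and it is Zariski denseness of $\rho$---not merely irreducibility---that survives this shrinking and feeds into Lemma \ref{irr_qachi}.
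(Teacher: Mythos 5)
Your proposal is correct and follows essentially the same route as the paper: the easy implications are dispatched the same way, and the substantive step reduces to a normal finite-index subgroup on which $\rho$ is irreducible (via Lemma \ref{Zd_qarep_justification}), conjugates $\tilde\rho$ to equal $\rho$ there (via Lemma \ref{irr_qachi}), and then shows $\rho(g)^{-1}\tilde\rho(g)$ is central of reduced norm $1$, hence $\pm\mathbf{1}$. The only stylistic difference is that you run the Schur-type step directly in $\mathscr{A}$ via the centralizer of $\rho(N)$, whereas the paper passes to $\mathrm{Mat}(2,\Complex)$ and phrases the same observation using the adjoint representation having kernel $\{\pm I\}$; and you prove $(4)\Rightarrow(1)$ directly rather than $(4)\Rightarrow(2)$, which is harmless since $(1)\Leftrightarrow(2)$.
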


\begin{proof}
	The equivalence (1) $\Leftrightarrow$ (2)
	follows from Lemma \ref{irr_qachi}.
	The implication (2) $\Rightarrow$ (3) follows immediately from the fact
	that $\mathrm{Fratt}_2(\pi)$ is generated by squares of elements in $\pi$.
	The implication (3) $\Rightarrow$ (4) with `any' is trivial.
	It remains to prove (4) with `some' $\Rightarrow$ (2).
	To this end,
	we may assume without loss of generality that $\pi'$ is normal in $\pi$,
	and argue with $\mathscr{A}=\mathrm{Mat}(2,\Complex)$ over $F=\Complex$,
	since $\pi$ is finitely generated.
	By Lemma \ref{Zd_qarep_justification},
	and Remark \ref{Zd_qarep_justification_remark}, 
	$\rho\colon\pi\to\mathrm{SL}(2,\Complex)$
	is irreducible restricted to 
	the finite-index subgroup $\mathrm{Fratt}(\pi')$.
	Since $\tilde{\rho}$ has the same character as 
	that of $\rho$ restricted to $\mathrm{Fratt}_2(\pi')$,
	we may adjust $\tilde{\rho}$ with a conjugation in $\mathrm{SL}(2,\Complex)$,
	and assume henceforth $\tilde{\rho}=\rho$ on $\mathrm{Fratt}_2(\pi')$,
	(see \cite[Proposition 1.5.2]{CS_rep_var}).
	
	Since $\rho$ is irreducible on $\mathrm{Fratt}_2(\pi')$,
	there exist $g_1,g_2,g_3\in \mathrm{Fratt}_2(\pi')$,
	such that the matrices 
	$I,\rho(g_1),\rho(g_2),\rho(g_3)$
	span $\mathrm{Mat}(2,\Complex)$ over $\Complex$.
	We denote $X_i=2\cdot\rho(g_i)-\chi_\rho(g_i)\cdot I$
	in $\mathrm{Mat}(2,\Complex)$, for $i=1,2,3$.
	Then the elements $X_1,X_2,X_3$
	span the complex Lie subalgebra $\mathfrak{sl}(2,\Complex)$,	
	(which consists of all the traceless matrices).
	It is well-known that the adjoint action
	$\mathrm{Ad}_g(X)=gXg^{-1}$
	defines a group homomorphism
	$\mathrm{Ad}\colon\mathrm{SL}(2,\Complex)\to \mathrm{Aut}(\mathfrak{sl}(2,\Complex))$
	with kernel $\{\pm I\}$.
	As we have assumed $\pi'$ normal in $\pi$,
	its characteristic subgroup $\mathrm{Fratt}_2(\pi')$ is also normal in $\pi$.
	We observe 
	$\mathrm{Ad}_{\rho(u)}(X_i)=
	2\cdot\rho(ug_iu^{-1})-\chi_\rho(g_i)\cdot I=
	2\cdot\tilde{\rho}(ug_iu^{-1})-\chi_\rho(g_i)\cdot I=
	2\cdot\tilde{\rho}(u)\tilde{\rho}(g_i)\tilde{\rho}(u^{-1})-\chi_\rho(g_i)\cdot I=
	2\cdot\tilde{\rho}(u)\rho(g_i)\tilde{\rho}(u^{-1})-\chi_\rho(g_i)\cdot I=
	\mathrm{Ad}_{\tilde{\rho}(u)}(X_i)$,
	for $i=1,2,3$, and for all $u\in\pi$.	
	It follows that $\rho(u)=\varepsilon(u)\cdot\tilde{\rho}(u)$
	holds	for some $\varepsilon(u)\in\{\pm 1\}$.
	Clearly $\varepsilon\colon\pi\to \{\pm1\}$
	must be a group homomorphism,
	so we obtain $\chi_{\tilde{\rho}}=\varepsilon\cdot\chi_\rho$ on $\pi$,
	as desired.	
\end{proof}

\begin{definition}\label{Frattini_equivalence_def}
	With $\mathscr{A}/F$ as in Definition \ref{qarep_def},
	let	$\pi$ is a finitely generated group.
	We say that 
	a representation $\tilde{\rho}\colon\pi\to \mathrm{SL}(1,\mathscr{A})$ 
	is \emph{$\mathrm{Hom}(\pi;\{\pm1\})$--equivalent}
	to a Zariski dense representation $\rho\colon\pi\to \mathrm{SL}(1,\mathscr{A})$
	if $\tilde{\rho}$ satisfies any of the equivalent conditions
	in Lemma \ref{virtual_Frattini_character}.
	Note that $\tilde{\rho}$ is necessarily Zariski dense in this case.
\end{definition}

We draw the following immediate consequence from the above discussion,
(compare \cite[Chapter 3, Theorem 3.3.4]{Maclachlan--Reid_book}).

\begin{corollary}\label{virtual_invar_qa}
	With $\mathscr{A}/F$ as in Definition \ref{qarep_def},
	let $\pi$ be a finitely generated group,
	and $\rho\colon\pi\to \mathrm{SL}(1,\mathscr{A})$
	be a Zariski dense representation.
	Then the invariant trace field 
	and the invariant quaternion algebra of $\rho$
	depend only on the $\mathrm{Hom}(\pi,\{\pm1\})$--equivalence class of $\rho$,
	and they remain unchanged 
	under passage	to finite-index subgroups of $\pi$ and restrictions of $\rho$.
\end{corollary}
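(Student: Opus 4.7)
The plan is to establish the two invariance claims separately.

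For $\mathrm{Hom}(\pi,\{\pm1\})$-equivalence, both assertions follow immediately from Lemma~\ref{virtual_Frattini_character}. If $\tilde\rho$ is $\mathrm{Hom}(\pi,\{\pm1\})$-equivalent to $\rho$, condition~(3) of that lemma gives $\chi_{\tilde\rho}=\chi_\rho$ on $\mathrm{Fratt}_2(\pi)$, so the two invariant trace fields, being generated by the same values, coincide as subfields of $F$. Condition~(1) furnishes $T\in\mathscr{A}^\times$ and a homomorphism $\varepsilon\colon\pi\to\{\pm1\}$ with $\tilde\rho = T(\varepsilon\cdot\rho)T^{-1}$; since $\varepsilon$ is trivial on the square-generated subgroup $\mathrm{Fratt}_2(\pi)$, the restrictions $\tilde\rho|_{\mathrm{Fratt}_2(\pi)}$ and $\rho|_{\mathrm{Fratt}_2(\pi)}$ are $T$-conjugate, so the invariant quaternion algebras are conjugate inside $\mathscr{A}^\times$.

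The bulk of the proof concerns passage to a finite-index subgroup $\pi'\leq\pi$, which I may assume normal in $\pi$. Write $k,\mathscr{B}$ and $k',\mathscr{B}'$ for the invariants of $\rho$ and $\rho|_{\pi'}$ respectively. The inclusions $k'\subseteq k$ and $\mathscr{B}'\subseteq\mathscr{B}$ are immediate from $\mathrm{Fratt}_2(\pi')\subseteq\mathrm{Fratt}_2(\pi)$; the task is to prove the reverse inclusions, for which I apply the Skolem--Noether theorem to the central simple $k'$-algebra $\mathscr{B}'$. Since $\mathrm{Fratt}_2(\pi')$ is characteristic in $\pi'\trianglelefteq\pi$, it is normal in $\pi$, so conjugation by each $\rho(g)$ (for $g\in\pi$) preserves the $k'$-subalgebra $\mathscr{B}'$; Skolem--Noether then produces $u_g\in(\mathscr{B}')^\times$ implementing the same conjugation, so that $\rho(g)u_g^{-1}\in C_\mathscr{A}(\mathscr{B}')$. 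Because $\rho|_{\mathrm{Fratt}_2(\pi')}$ is irreducible (by Lemma~\ref{Zd_qarep_justification}), Lemma~\ref{irr_qarep_justification}(2) gives that $\rho(\mathrm{Fratt}_2(\pi'))$ generates $\mathscr{A}$ over $F$; a dimension count then identifies the natural multiplication map $\mathscr{B}'\otimes_{k'}F\to\mathscr{A}$ as an isomorphism of $F$-algebras, whence $C_\mathscr{A}(\mathscr{B}')=F$.

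I now write $\rho(g)=c_gu_g$ with $c_g\in F^\times$ and $u_g\in(\mathscr{B}')^\times$. For any $u\in\mathscr{B}'$ the reduced traces and norms computed in $\mathscr{A}/F$ and in $\mathscr{B}'/k'$ agree (both coincide with the coefficients of the unique monic annihilating polynomial of degree at most $2$), and in particular land in $k'$. The relation $\mathrm{Nr}(\rho(g))=1$ then gives $c_g^2=\mathrm{Nr}_{\mathscr{B}'/k'}(u_g)^{-1}\in k'$, and $\chi_\rho(g)=c_g\cdot\mathrm{Tr}_{\mathscr{B}'/k'}(u_g)$ yields $\chi_\rho(g)^2\in k'$. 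For $h=g_1^2\cdots g_n^2\in\mathrm{Fratt}_2(\pi)$, the centrality of each $c_{g_i}\in F$ gives $\rho(h)=\bigl(\prod_ic_{g_i}^2\bigr)\cdot\bigl(\prod_iu_{g_i}^2\bigr)\in k'\cdot\mathscr{B}'=\mathscr{B}'$ and $\chi_\rho(h)\in k'$, whence $k\subseteq k'$ and $\mathscr{B}\subseteq\mathscr{B}'$. The main technical hurdle is pinning down $C_\mathscr{A}(\mathscr{B}')=F$, which relies crucially on the Zariski density of $\rho|_{\pi'}$; the remainder is standard quaternion arithmetic.
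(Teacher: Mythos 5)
Your proof is correct. The paper provides no proof of this corollary; it calls it an immediate consequence of the preceding discussion and cites Maclachlan--Reid, Chapter~3, Theorem~3.3.4. The $\mathrm{Hom}(\pi,\{\pm1\})$-equivalence invariance is indeed immediate from Lemma~\ref{virtual_Frattini_character} (combined with Lemma~\ref{irr_qachi}), as you show --- with the quaternion algebras agreeing only up to conjugacy in $\mathscr{A}^\times$, which is the intended reading and the one used downstream. The finite-index invariance is the abstract-quaternion-algebra version of the cited Maclachlan--Reid theorem, which the paper takes on authority; your Skolem--Noether argument is a correct and self-contained proof of it, with the reduction to $\pi'$ normal handled properly via the sandwich $k''\subseteq k'\subseteq k$ and $\mathscr{B}''\subseteq\mathscr{B}'\subseteq\mathscr{B}$. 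Its two nontrivial ingredients are exactly the ones you single out: first, that reduced trace and norm on $\mathscr{B}'$ agree whether computed over $k'$ or over $F$ (valid because $\mathscr{B}'\cap F=k'$, so for $u\in\mathscr{B}'\setminus k'$ the $k'$-coefficient quadratic $u^2-\mathrm{Tr}(u)\,u+\mathrm{Nr}(u)=0$ is also the minimal polynomial of $u$ over $F$); and second, that $C_\mathscr{A}(\mathscr{B}')=F$, which follows from Zariski density of $\rho|_{\pi'}$ (Lemmas~\ref{Zd_qarep_justification} and~\ref{irr_qarep_justification}) making $\rho(\mathrm{Fratt}_2(\pi'))$ generate $\mathscr{A}$ over $F$, so that the multiplication map $\mathscr{B}'\otimes_{k'}F\to\mathscr{A}$ is a surjection from a simple $4$-dimensional $F$-algebra onto a $4$-dimensional one, hence an isomorphism, under which the centralizer of $\mathscr{B}'$ is visibly $F$.
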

%
%
%
%
%
%

\begin{definition}\label{bounded_qarep_def}
	With $\mathscr{A}/F$ as in Definition \ref{qarep_def}.
	We say that a representation $\rho\colon \pi\to \mathrm{SL}(1,\mathscr{A})$
	is \emph{bounded} at a place $v$,
	if the image of the induced homomorphism
	$\rho_v\colon\pi\to \mathrm{SL}(1,\mathscr{A}_v)$ is bounded 
	in $\mathrm{Mat}(1,\mathscr{A}_v)=\mathscr{A}_v$.
	Here, $\mathscr{A}_v=\mathscr{A}\otimes_FF_v$
	denotes the complete quaternion algebra over the complete field $F_v$.
\end{definition}

\begin{lemma}\label{bounded_character_property}
	With $\mathscr{A}/F$ as in Definition \ref{qarep_def},
	let $\pi$ be a group and
	 $\rho\colon \pi\to\mathrm{SL}(1,\mathscr{A})$ be an irreducible representation.
	Then, $\rho$ is bounded at a non-Archimedean place $v$ 
	if and only if $\chi_{\rho}\colon \pi\to F$ 
	has image contained in the valuation ring of $F$ at $v$.
\end{lemma}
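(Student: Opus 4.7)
The plan is to prove the two implications separately, with the converse direction splitting further into cases depending on whether $\mathscr{A}_v=\mathscr{A}\otimes_FF_v$ is split or a division algebra over the completion $F_v$.

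For the easy direction, suppose $\rho_v(\pi)$ is bounded in $\mathscr{A}_v$. The reduced trace $\mathrm{Tr}_{\mathscr{A}_v/F_v}\colon\mathscr{A}_v\to F_v$ is $F_v$--linear, hence continuous, and agrees with $\chi_\rho$ on $\pi$ after the natural identification. Therefore $\chi_\rho(\pi)$ is bounded in $F_v$, i.e., contained in the valuation ring $\mathcal{O}_{F_v}$. Since $\chi_\rho$ takes values in $F$, this forces $\chi_\rho(\pi)\subset F\cap \mathcal{O}_{F_v}$, which is the valuation ring of $F$ at $v$.

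For the converse, suppose $\chi_\rho(\pi)$ lies in the valuation ring of $F$ at $v$. If $\mathscr{A}_v$ is a quaternion division algebra over $F_v$, then by the structure theory of local division algebras (see Example \ref{qa_example}(2)), the valuation on $F_v$ extends uniquely via the reduced norm, and the norm-one group $\mathrm{SL}(1,\mathscr{A}_v)$ sits inside the unique maximal compact subring of $\mathscr{A}_v$. Hence $\rho_v(\pi)\subseteq\mathrm{SL}(1,\mathscr{A}_v)$ is automatically bounded, and the hypothesis on $\chi_\rho$ is not even needed. In the remaining case $\mathscr{A}_v\cong\mathrm{Mat}(2,F_v)$, condition (2) of Lemma \ref{irr_qarep_justification} says that $\rho(\pi)$ generates $\mathscr{A}$ as an $F$--algebra; tensoring with $F_v$, the image $\rho_v(\pi)$ generates $\mathrm{Mat}(2,F_v)$ as an $F_v$--algebra, so $\rho_v$ is $F_v$--irreducible and in particular preserves no line in $F_v^2$.

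I will then conclude by considering the action of $\mathrm{SL}(2,F_v)$ on its Bruhat--Tits tree $\mathscr{T}_v$. By the standard theory of group actions on trees, a subgroup of $\mathrm{SL}(2,F_v)$ is bounded if and only if it fixes some vertex of $\mathscr{T}_v$. If $\rho_v(\pi)$ were unbounded, then either it fixes an end of $\mathscr{T}_v$ or it contains a hyperbolic isometry. The former alternative would give a common fixed point in $\mathbf{P}^1(F_v)$, i.e., an invariant line in $F_v^2$, contradicting the $F_v$--irreducibility established above. The latter alternative yields some $g\in\pi$ such that $\rho_v(g)$ has eigenvalues $\lambda,\lambda^{-1}\in F_v$ with $|\lambda|_v>1>|\lambda^{-1}|_v$; the ultrametric inequality then forces $|\chi_\rho(g)|_v=|\lambda+\lambda^{-1}|_v=|\lambda|_v>1$, contradicting $\chi_\rho(g)\in\mathcal{O}_{F_v}$. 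The only mild subtlety in this argument is the passage of irreducibility from $F$ to $F_v$, which is handled cleanly by condition (2) of Lemma \ref{irr_qarep_justification}; no serious obstacle is anticipated.
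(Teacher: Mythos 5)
Your forward direction has a genuine gap. You write that $\chi_\rho(\pi)$ is ``bounded in $F_v$, i.e., contained in the valuation ring $\mathcal{O}_{F_v}$''; but boundedness of a subset of a non-Archimedean local field does not imply containment in its valuation ring (the singleton $\{p^{-1}\}\subset\Rational_p$ is bounded and not integral). Continuity of the reduced trace only tells you that the image of $\chi_\rho$ is bounded, not integral, and you still need to rule out even a single value with $|\chi_\rho(g)|_v>1$. The paper closes exactly this step by iterating the trace relation $\chi_\rho(g^2)=\chi_\rho(g)^2-2$ (Proposition \ref{trace_relations}): if $|\chi_\rho(g)|_v>1$, the ultrametric inequality gives $|\chi_\rho(g^2)|_v=|\chi_\rho(g)|_v^2$, so iterating along $g^{2^n}$ drives $|\chi_\rho(g^{2^n})|_v\to\infty$ and contradicts boundedness of $\rho_v(\pi)$. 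An alternative patch is to conjugate the compact closure of $\rho_v(\pi)$ into a maximal order, whose elements have integral trace; but as written your ``i.e.'' silently asserts something that is false for general bounded subsets of $F_v$.

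Your converse direction is correct but takes a genuinely different route from the paper. You split on whether $\mathscr{A}_v$ is division (where $\mathrm{SL}(1,\mathscr{A}_v)$ is already compact) or split, and in the split case invoke the classification of group actions on the Bruhat--Tits tree of $\mathrm{SL}(2,F_v)$: fixing an end would produce an invariant line contradicting $F_v$-irreducibility, while a hyperbolic element would force a trace of absolute value $>1$. You correctly retain the end-fixing alternative, which does matter since $\pi$ is not assumed finitely generated in this lemma. The paper instead avoids any case split with a purely linear-algebraic argument: irreducibility supplies $g_0,\dots,g_3\in\pi$ with $X_i=\rho(g_i)$ an $F$-basis of $\mathscr{A}$; taking the dual basis $X_0^*,\dots,X_3^*$ under the nondegenerate trace pairing, one writes $\rho(u)=\sum_i\chi_\rho(ug_i)X_i^*$ for every $u\in\pi$, so integrality of $\chi_\rho$ traps $\rho_v(\pi)$ in the bounded lattice $R_vX_0^*+\cdots+R_vX_3^*$. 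The tree-theoretic route is a reasonable alternative, but the paper's is shorter, self-contained, and does not depend on the local ramification dichotomy for $\mathscr{A}_v$.
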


\begin{proof}
	Denote by $R=\{x\in F\colon |x|_v\leq1\}$ the valuation ring of $F$ at $v$,
	fixing a representative norm $|\cdot|_v$.
	The trace relation
	$\mathrm{Tr}_{\mathscr{A}/F}(\rho(g^2))=\mathrm{Tr}_{\mathscr{A}/F}(\rho(g))^2-2$
	(Proposition \ref{trace_relations}) 
	and the super-triangular inequality for non-Archimedean valuations
	imply $|\chi_\rho(g^2)|_v=|\chi_\rho(g)|_v^2$
	whenever $|\chi_\rho(g)|_v>1$.
	By iteration, we see that $\chi_\rho$ diverges
	along the $2$--exponent powers of $g$ 
	whenever $|\chi_\rho(g)|_v>1$.
	Therefore, 
	if $\rho$ is bounded at $v$, we obtain
	$\chi_\rho(g)\in R$ for all $g\in\pi$.
	
	Conversely, if $\chi_\rho(g)\in R$ holds for all $g\in\pi$,
	we show that $\rho$ is bounded at $v$.
	We make use of the canonical \emph{trace pairing} 
	$\mathscr{A}\times\mathscr{A}\to F$,
	defined as $(A,B)\mapsto \mathrm{Tr}_{\mathscr{A}/F}(AB)$.
	The pairing is symmetric, $F$--bilinear, and nondegenerate.
	(This	can be easily seen with any Hilbert symbol 
	$\mathscr{A}\cong(a,b/F)$:
	The pairing matrix is diagonalized over 
	the standard basis $\mathbf{1},\mathbf{i},\mathbf{j},\mathbf{k}$
	with diagonal entries $2,2a,2b,-2ab$.)
	Since $\rho$ is irreducible, 
	there exist elements $g_0,g_1,g_2,g_3\in\pi$,
	such that 
	$X_0=\rho(g_0),X_1=\rho(g_1),X_2=\rho(g_2),X_3=\rho(g_3)$
	form a basis of $\mathscr{A}$ over $F$.
	Denote by $X_0^*,X_1^*,X_2^*,X_3^*$
	the dual basis with respect to the trace pairing.
	Then 
	$\rho(u)=\chi_\rho(ug_0)\cdot X_0^*+\chi_\rho(ug_1)\cdot X_1^*+\chi_\rho(ug_2)\cdot X_2^*+\chi_\rho(ug_3)\cdot X_3^*$
	holds in $\mathscr{A}$ for all $u\in\pi$.
	Since $\chi_\rho$ has image in $R$,
	this means $\rho_v(u)$ lies in the bounded subset
	$R_vX_0^*+R_vX_1^*+R_vX_2^*+R_vX_3^*\subset \mathscr{A}_v$, 
	for all $u\in \pi$,
	so $\rho$ is bounded at $v$.
\end{proof}

\section{Profinite correspondence of representations}\label{Sec-profin_corresp_rep}
In this section, we prove an essential case of Theorem \ref{main_rep_EZ},
namely, the case with pseudo-Anosov mapping tori (Lemma \ref{profinite_correspondence_rho_pA}).

\begin{lemma}\label{profinite_correspondence_rho_pA}
	Let $\Rational^{\algcl}$ be an algebraic closure of $\Rational$.
	Let $(M_A,M_B,\Psi)$ be a profinite isomorphism setting of pseudo-Anosov mapping tori
	(Convention \ref{profinite_isomorphism_setting_pA}).
	
	Then, for any Zariski dense representation
	$\rho_B\colon \pi_B\to \mathrm{SL}(2,\Rational^{\algcl})$,
	there exists a Zariski dense representation
	$\rho_A\colon \pi_A\to \mathrm{SL}(2,\Rational^{\algcl})$
	with all the following properties:
	\begin{itemize}
	\item The invariant trace fields $k_A$ of $\rho_A$ 
	and $k_B$ of $\rho_B$ are identical in $\Rational^{\algcl}$.
	\item 
	The invariant quaternion algebras $\mathscr{B}_A$ of $\rho_A$
	and $\mathscr{B}_B$ of $\rho_B$ are ramified 
	over identical sets of places in $k=k_A=k_B$.
	Hence, $\mathscr{B}_A$ and $\mathscr{B}_B$ 
	are $\mathrm{GL}(2,\Rational^{\algcl})$--conjugate 
	in $\mathrm{Mat}(2,\Rational^{\algcl})$.
	\item 
	For any prime $\mathfrak{p}$ in $k$,
	the restricted representation 
	$\rho^{\natural}_{A}\colon \mathrm{Fratt}_2(\pi_A)\to\mathrm{SL}(1,\mathscr{B}_A)$
	is bounded at $\mathfrak{p}$
	if and only if 
	$\rho^{\natural}_B\colon \mathrm{Fratt}_2(\pi_B)\to\mathrm{SL}(1,\mathscr{B}_B)$
	is bounded at $\mathfrak{p}$.
	\item
	For any prime $\mathfrak{p}$ in $k$
	where $\rho^{\natural}_A$ and $\rho^{\natural}_B$ are bounded,
	the equality
	$\chi^{\natural}_{A,\mathfrak{p}}=\chi^{\natural}_{B,\mathfrak{p}}\circ\Psi$ 
	holds on $\mathrm{Fratt}_2(\widehat{\pi}_A)=\Psi^{-1}(\mathrm{Fratt}_2(\widehat{\pi}_B))$.
	Here, 
	we denote by 
	$\chi^{\natural}_{B,\mathfrak{p}}\colon\mathrm{Fratt}_2(\widehat{\pi}_B)\to k_{\mathfrak{p}}$
	the character of the continuous representation 
	$\rho^{\natural}_{B,\mathfrak{p}}\colon\mathrm{Fratt}_2(\widehat{\pi}_B)\to 
	\mathrm{SL}(1,\mathscr{B}_{B,\mathfrak{p}})$
	obtained by completion,
	and similarly
	$\chi^{\natural}_{A,\mathfrak{p}}$
	of 
	$\rho^{\natural}_{A,\mathfrak{p}}$.
	\end{itemize}
	
	Subject to the above properties,
	the $\mathrm{Hom}(\pi_A,\{\pm1\})$--equivalence class of $\rho_A$
	depends only on $\Psi$ and the $\mathrm{Hom}(\pi_B,\{\pm1\})$--equivalence class of $\rho_B$.
\end{lemma}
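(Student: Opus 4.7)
The plan is to construct $\rho_A$ by $\Psi$--pulling back the $\mathfrak{P}$--adic completion of $\rho_B$ at suitable non-Archimedean primes, and then use the trajectory correspondence to certify that the construction descends to $\Rational^{\algcl}$ with the required trace field and quaternion algebra. First I pick a number field $K\subset\Rational^{\algcl}$ containing the invariant trace field $k_B$ such that $\rho_B$ factors through $\SL(2,K)$. For all but finitely many non-Archimedean primes $\mathfrak{P}$ of $K$ the representation $\rho_B$ is bounded at $\mathfrak{P}$, hence extends continuously to $\widehat{\pi}_B\to \SL(2,O_{K,\mathfrak{P}})$, and I set
$$\rho_{A,\mathfrak{P}}^{\Psi}\,=\,\rho_{B,\mathfrak{P}}\circ\Psi\big|_{\pi_A}\colon\pi_A\longrightarrow\SL(2,O_{K,\mathfrak{P}}).$$
The essential task is to show that, on $\mathrm{Fratt}_2(\pi_A)$, the character of $\rho_{A,\mathfrak{P}}^{\Psi}$ equals $\chi^{\natural}_B\circ\Psi$, takes values in $k_B$, and is independent of $\mathfrak{P}$.

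To this end I would pass to a $\Psi$--corresponding pair $M''_A,M''_B$ of deep finite covers, with $\pi''_B\subset \mathrm{Fratt}_2(\pi_B)\cap \rho_B^{-1}(G_{1,\mathfrak{P}})$, where $G_{1,\mathfrak{P}}$ is the principal $\mathfrak{P}$--congruence subgroup of $\SL(2,O_{K,\mathfrak{P}})$. These are again pseudo-Anosov mapping tori and form a profinite isomorphism setting with the same invertible profinite integer $\mu$. Fix a periodic trajectory $\mathbf{c}''_A$ of $M''_A$, and pick $g''_A\in\mathbf{c}''_A$; by Lemma \ref{profinite_correspondence_pt}, $\Psi(g''_A)^{\mu}$ is $\widehat{\pi}_B$--conjugate to some $g''_B\in\mathbf{c}''_B$, where $\mathbf{c}''_B$ is the corresponding periodic trajectory of $M''_B$. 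Writing $M=\rho_{B,\mathfrak{P}}(\Psi(g''_A))$, the choice of $\pi''_B$ ensures $M\in G_{1,\mathfrak{P}}$, and $M^{\mu}$ is $\SL(2,O_{K,\mathfrak{P}})$--conjugate to $\rho_B(g''_B)$. Since $\mu^2=1$ in $\widehat{\Integral}^{\times}$ (Lemma \ref{p_regularity}), Lemma \ref{A_mu} gives $\mathrm{tr}(M)=\mathrm{tr}(M^{\mu})=\chi_{\rho_B}(\mathbf{c}''_B)\in k_B$. Applying Lemma \ref{trace_field_pt} to $M''_A$ and $\rho_{A,\mathfrak{P}}^{\Psi}|_{\pi''_A}$ propagates this equality to the entire character on $\pi''_A$; combined with the stability of the invariant trace field under restriction to finite-index subgroups (Corollary \ref{virtual_invar_qa}), I conclude that $\rho_{A,\mathfrak{P}}^{\Psi}$ has invariant trace field $k_B$, and that $\chi^{\natural}_{A,\mathfrak{p}}=\chi^{\natural}_{B,\mathfrak{p}}\circ\Psi$ on $\mathrm{Fratt}_2(\widehat{\pi}_A)$ for each such prime $\mathfrak{p}$, independently of the choice.

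With this character in hand, values on all of $\pi_A$ are algebraic, because $\chi(g)^2=\chi(g^2)+2\in k_B$ for every $g\in\pi_A$; I realize $\rho_A\colon\pi_A\to\SL(2,\Rational^{\algcl})$ as any representation with the same character as $\rho_{A,\mathfrak{P}}^{\Psi}$ (Lemma \ref{irr_qachi}, applied inside the quaternion algebra over $k_B$ generated by $\rho_{A,\mathfrak{P}}^{\Psi}(\mathrm{Fratt}_2(\pi_A))$). By construction $k_A=k_B$, and since irreducible $\SL(2,\Complex)$--representations with equal characters are conjugate, $\mathscr{B}_A\cong\mathscr{B}_B$ as $k_B$--algebras, so they share the same non-Archimedean ramification. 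For each real place of $k_B$, Lemma \ref{quaternion_algebra_pt} translates ramification into uniform boundedness of the real-valued character on periodic trajectories of the $\mathrm{Fratt}_2$--cover $M'_A$; each such trajectory lifts after a finite multiple to one of $M''_A$, on which the character matches that of the corresponding trajectory of $M''_B$, so $M'_A$ and $M'_B$ fall on the same side of the dichotomy, matching the real ramification of $\mathscr{B}_A$ and $\mathscr{B}_B$. Non-Archimedean boundedness of $\rho_A^{\natural}$ at a prime $\mathfrak{p}$ of $k_B$ is detected by integrality of $\chi^{\natural}_A$ at $\mathfrak{p}$ (Lemma \ref{bounded_character_property}), which matches that of $\chi^{\natural}_B\circ\Psi$ by the previous paragraph. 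Uniqueness of $\rho_A$ up to $\mathrm{Hom}(\pi_A,\{\pm1\})$--equivalence is then immediate from Lemma \ref{virtual_Frattini_character}.

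The main obstacle is the trace computation of the second paragraph: the trajectory correspondence only compares $\rho_{B,\mathfrak{P}}(\Psi(g''_A))$ with its $\mu$--twist, not with $\rho_B(g''_B)$ directly, so absorbing the $\mu$--twist requires both the improvement $\mu^2=1$ from Section \ref{Sec-mu_square} and the passage to the pro-$p$ principal congruence subgroup in order to apply Lemma \ref{A_mu}. Once the trace matching holds on periodic trajectories, Lemma \ref{trace_field_pt} transports the information to the full invariant character, and Lemmas \ref{bounded_character_property} and \ref{quaternion_algebra_pt} handle the ramification bookkeeping.
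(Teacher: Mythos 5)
Your overall strategy is the paper's: pull back the $\mathfrak{P}$--adic completion $\rho_{B,\mathfrak{P}}$ by $\Psi$, pass to a deep $\Psi$--corresponding cover living inside the preimage of a principal congruence subgroup, use Lemma~\ref{profinite_correspondence_pt} together with $\mu^2=1$ (Lemmas~\ref{p_regularity} and \ref{A_mu}) to match traces on periodic trajectories, propagate by Lemma~\ref{trace_field_pt}, and recognize the trace field via Corollary~\ref{virtual_invar_qa}. This part is correct. However, there are two genuine gaps.

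The more serious one is the sentence ``since irreducible $\mathrm{SL}(2,\Complex)$--representations with equal characters are conjugate, $\mathscr{B}_A\cong\mathscr{B}_B$ as $k_B$--algebras, so they share the same non-Archimedean ramification.'' The cited principle (Lemma~\ref{irr_qachi}) compares two representations \emph{of the same group} into \emph{the same} quaternion algebra. Here $\rho^{\natural}_A$ has source $\mathrm{Fratt}_2(\pi_A)$ and $\rho^{\natural}_B$ has source $\mathrm{Fratt}_2(\pi_B)$; these are not identified by $\Psi$ (only their closures in the profinite completions are), so the lemma does not apply. An isomorphism $\mathscr{B}_A\cong\mathscr{B}_B$ over $k_B$ cannot be concluded directly; indeed, if it could, your subsequent Archimedean discussion via Lemma~\ref{quaternion_algebra_pt} would be superfluous. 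The paper instead argues place by place: for a non-Archimedean $\mathfrak{p}$ where the representations are bounded, it identifies the completions $\mathscr{B}_{A,\mathfrak{p}}$ and $\mathscr{B}_{B,\mathfrak{p}}$ as $k_{\mathfrak{p}}$--subalgebras of $\mathrm{Mat}(2,K_{\mathfrak{P}})$ topologically spanned by the \emph{same} closed subgroup $\rho^{\Psi}_{A,\mathfrak{P}}(\mathrm{Fratt}_2(\widehat{\pi}_A))=\rho_{B,\mathfrak{P}}(\mathrm{Fratt}_2(\widehat{\pi}_B))$, obtaining $\mathscr{B}_{A,\mathfrak{p}}\cong\mathscr{B}_{B,\mathfrak{p}}$ up to conjugation in $\mathrm{Mat}(2,K_{\mathfrak{P}})$; the global isomorphism $\mathscr{B}_A\cong\mathscr{B}_B$ over $k$ is then deduced \emph{after} all ramification places (Archimedean and non-Archimedean) have been checked, via the classification of quaternion algebras over number fields (Example~\ref{qa_example}). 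You need to supply the non-Archimedean local comparison rather than assert the global one.

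The second gap is smaller: you assert that the constructed character on $\mathrm{Fratt}_2(\pi_A)$ is ``independently of the choice'' of $\mathfrak{P}$, but you never show $\chi^{\Psi}_{A,\mathfrak{P}}=\chi^{\Psi}_{A,\mathfrak{Q}}$ on $\mathrm{Fratt}_2(\pi_A)$ for two admissible primes $\mathfrak{P}$ and $\mathfrak{Q}$. This is needed to legitimize the fourth bullet at \emph{every} bounded prime, since $\rho_A$ is built from one fixed $\mathfrak{P}$. The paper closes this by passing to a common refinement cover $\mathrm{Fratt}_2(\pi_{A}[\mathfrak{P}]\cap\pi_{A}[\mathfrak{Q}])$, on whose periodic trajectories both characters equal the common $k$--valued number $\chi_B(\mathbf{c}_B)$, then invoking Lemma~\ref{trace_field_pt} and Lemma~\ref{virtual_Frattini_character}. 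A similar refinement step should be added to your argument.
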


\begin{proof}
	Starting from any Zariski dense representation
	$\rho_B\colon \pi_B\to \mathrm{SL}(2,\Rational^{\algcl})$,
	we obtain its invariant trace field $k=k_B$ in $\Rational^{\algcl}$,
	and its invariant quaternion algebra $\mathscr{B}_B$
	in $\mathrm{Mat}(2,\Rational^{\algcl})$.
	There is an associated representation
	$\rho^{\natural}_B\colon\mathrm{Fratt}_2(\pi_B)\to \mathrm{SL}(1,\mathscr{B}_B)$.
	We also choose a sufficiently large finite extension field $K$ of $k_B$
	in $\Rational^{\algcl}$,
	such that $\rho_B$ factors through a matrix representation over $K$,
	still denoted as $\rho_B\colon\pi_B\to \mathrm{SL}(2,K)$.
	Denote by $O_K$ the ring of integers in $K$.
	The characters $\chi^{\natural}_B$ and $\chi_B$
	are given as composition along the horizontal arrows 
	in the following commutative diagram:
	\begin{equation}\label{chi_sharp_natural}
	\xymatrix{
	\mathrm{Fratt}_2(\pi_B) \ar[r]^-{\rho^{\natural}_B} \ar[d]_{\mathrm{incl}} & 
	\mathrm{SL}(1,\mathscr{B}_B) \ar[r]^-{\mathrm{Tr}} \ar[d]_{\mathrm{incl}} &
	k \ar[d]^{\mathrm{incl}} \\
	\pi_B \ar[r]^-{\rho_B} & 
	\mathrm{SL}(2,K) \ar[r]^-{\mathrm{Tr}} &
	K 	
	}
	\end{equation}
	
	Note that $\rho_B$ is bounded at all but finitely many primes in $K$.
	In fact, if $g_1,\cdots,g_r$ is a finite generating set of $\pi_B$,
	and if a prime $\mathfrak{P}$ in $K$ does not divide
	the denominator of any nonzero matrix entry in $\rho_B(g_1),\cdots,\rho_B(g_r)$,
	then $\rho_B$ is obviously bounded at $\mathfrak{P}$.
	Moreover,
	it is easy to check that $\rho^\natural_B$ is bounded at a prime $\mathfrak{p}$
	in $k$ if and only if $\rho_B$ is bounded at every prime $\mathfrak{P}$ in $K$
	above $\mathfrak{p}$,
	using the trace relation 
	$\chi_B(g)^2=\chi^{\natural}_B(g^2)+2$ for all $g\in\pi_B$,
	(see Lemmas \ref{trace_relations} and \ref{bounded_character_property}).
	
	We construct a continuous character 
	$\chi^{\Psi}_{A,\mathfrak{P}}\colon\widehat{\pi}_A\to K_{\mathfrak{P}}$
	as follows.
	Choose any prime $\mathfrak{P}$ in $K$ where $\rho_B$ is bounded.
	Then there is a unique continuous representation
	$\rho_{B,\mathfrak{P}}\colon
	\widehat{\pi}_B\to \mathrm{SL}(2,K_{\mathfrak{P}})$
	that extends the induced representation 
	$\pi_B\to \mathrm{SL}(2,K_{\mathfrak{P}})$ of $\rho_B$.
	We obtain a continuous representation
	$\rho^{\Psi}_{A,\mathfrak{P}}\colon \widehat{\pi}_A\to \mathrm{SL}(2,K_{\mathfrak{P}}),$
	by pulling back $\rho_{B,\mathfrak{P}}$ with 
	the profinite isomorphism $\Psi$.
	Denote by 
	$\chi^{\Psi}_{A,\mathfrak{P}}$
	the character of $\rho^{\Psi}_{A,\mathfrak{P}}$.
	
	We record the following obvious relation
	$$\rho^{\Psi}_{A,\mathfrak{P}}=\rho_{B,\mathfrak{P}}\circ \Psi.$$
	
	We claim that
	any continuous character $\chi^{\Psi}_{A,\mathfrak{P}}$
	as constructed above, by restriction,
	gives rise to a character
	\begin{equation}\label{chi_natural_A}
	\chi^{\natural}_{A}\colon \mathrm{Fratt}_2(\pi_A)\to k,
	\end{equation}
	which does not depend on the choice of $K$ or $\mathfrak{P}$.
	Moreover, the image of $\chi^{\natural}_{A}$ generates $k$ over $\Rational$.
	This claim is proved as follows.
	
	With respect to $\rho_{B,\mathfrak{P}}$,
	the preimage of the principal $\mathfrak{P}$--congruence subgroup
	$\mathrm{SL}(2,K_{\mathfrak{P}})\cap(I+\mathrm{Mat}(2,\mathfrak{P}O_{K,\mathfrak{P}}))$
	is an open normal subgroup of $\widehat{\pi}_B$.
	We denote the preimage as $\widehat{\pi}_B[\mathfrak{P}]$.
	Denote by $\pi_B[\mathfrak{P}]$
	the finite-index normal subgroup of $\pi_B$
	obtained by intersecting with $\widehat{\pi}_B[\mathfrak{P}]$ in $\widehat{\pi}_B$.
	We obtain a regular finite cover $M_B'$ of $M_B$
	with the fundamental group $\pi'_B=\mathrm{Fratt}_2(\pi_B[\mathfrak{P}])$.
	Using $\Psi$,
	we obtain a profinitely corresponding regular finite cover $M_A'$ of $M_A$
	with the fundamental group $\pi'_A=\mathrm{Fratt}_2(\pi_A[\mathfrak{P}])$.
	Note that $M_A'$ and $M_B'$
	can be identified as pseudo-Anosov mapping tori covering $M_A$ and $M_B$,
	respectively.
	The restricted profinite isomorphism $\Psi'\colon\widehat{\pi}_A'\to\widehat{\pi}_B'$	
	witnesses a bijective correspondence between the periodic trajectories
	$\mathbf{c}_A$ of $M_A'$ and $\mathbf{c}_B$ of $M_B'$,
	(Lemma \ref{profinite_correspondence_pt}).	
	Representing the conjugacy classes $\mathbf{c}_A$ and $\mathbf{c}_B$
	with $g_A\in\pi'_A$ and $g_B\in\pi'_B$, respectively,
	it follows that $\rho^{\Psi}_{A,\mathfrak{P}}(g_A)$ 
	and $\rho_{B,\mathfrak{P}}(g_B)^\mu$ are conjugate in $\mathrm{SL}(2,O_{\mathfrak{P}})$.
	In $K_{\mathfrak{P}}$, we obtain
	\begin{equation}\label{same_chi_pt}
	\chi^{\Psi}_{A,\mathfrak{P}}(\mathbf{c}_A)=\mathrm{tr}\left(\rho^{\Psi}_{A,\mathfrak{P}}(g_A)\right)
	=\mathrm{tr}\left(\rho_{B,\mathfrak{P}}(g_B)^\mu\right)
	=\mathrm{tr}\left(\rho_{B,\mathfrak{P}}(g_B)\right)
	=\chi_{B,\mathfrak{P}}(\mathbf{c}_B),
	\end{equation}
	using $\mu^2=1$, (Lemmas \ref{mu_square} and \ref{A_mu}).
	It follows that the invariant trace field $k_A$ of 
	$\rho^{\Psi}_{A,\mathfrak{P}}$ coincides with $k_B=k$,
	regardless of which $K$ or $\mathfrak{P}$ we have chosen,
	(Lemma \ref{trace_field_pt} and Corollary \ref{virtual_invar_qa}).
	
	The rest of the claim is
	the equality $\chi^{\Psi}_{A,\mathfrak{P}}=\chi^{\Psi}_{A,\mathfrak{Q}}$
	on $\mathrm{Fratt}_2(\pi_A)$,
	assuming $\rho_B$ bounded at 
	a pair of primes $\mathfrak{P}$ and $\mathfrak{Q}$ in
	a sufficiently large common field $K$.
	To this end, we observe that the finite-index normal subgroup 
	$\pi_B[\mathfrak{P}]\cap\pi_B[\mathfrak{Q}]$ of $\pi_B$
	corresponds to $\pi_A[\mathfrak{P}]\cap\pi_A[\mathfrak{Q}]$ of $\pi_A$,
	with respect to $\Psi$.
	Take the regular finite covers $M''_A$ of $M_A$ and $M''_B$ of $M_B$
	with the fundamental groups $\pi''_A=\mathrm{Fratt}_2(\pi_A[\mathfrak{P}]\cap\pi_A[\mathfrak{Q}])$
	and $\pi''_B=\mathrm{Fratt}_2(\pi_B[\mathfrak{P}]\cap\pi_B[\mathfrak{Q}])$, respectively.
	In $k=k_A=k_B$, we obtain 
	\begin{equation}\label{same_chi_pt_PQ}
	\chi^{\Psi}_{A,\mathfrak{P}}(\mathbf{c}_A)=\chi_{B,\mathfrak{P}}(\mathbf{c}_B)=\chi_B(\mathbf{c}_B)
	=\chi_{B,\mathfrak{Q}}(\mathbf{c}_B)=\chi^{\Psi}_{A,\mathfrak{Q}}(\mathbf{c}_A),
	\end{equation}
	for any $\Psi$--corresponding periodic trajectories
	$\mathbf{c}_A$ of $M_A''$ and $\mathbf{c}_B$ of $M_B''$
	(Lemma \ref{profinite_correspondence_pt}),
	the first and the last steps using the same computation as in (\ref{same_chi_pt}).
	Then we obtain $\chi^{\Psi}_{A,\mathfrak{P}}=\chi^{\Psi}_{A,\mathfrak{Q}}$
	on $\pi''_A$ (Lemma \ref{trace_field_pt}), 
	and hence on $\mathrm{Fratt}_2(\pi_A)$ (Lemma \ref{virtual_Frattini_character}).
	This proves the above claim about (\ref{chi_natural_A}).
	
	There exists some Zariski dense representation 
	\begin{equation}\label{rho_A_algcl}
	\rho_A\colon\pi_A\to\mathrm{SL}(2,\Rational^{\algcl}),
	\end{equation}
	and its character $\chi_A$ restricted to $\mathrm{Fratt}_2(\pi_A)$ coincides with $\chi^\natural_A$.
	In fact, 
	for any continuous character
	$\chi^\Psi_{A,\mathfrak{P}}\colon\widehat{\pi}_A\to K_{\mathfrak{P}}$
	in the above construction,
	the values of $\chi^{\Psi}_{A,\mathfrak{P}}$ on $\pi_A$
	generates a multi-quadratic finite extension over $k$,
	since $\pi_A/\mathrm{Fratt}_2(\pi_A)$
	is a finitely generated abelian $2$--torsion group.
	Then,
	possibly after passing to a finite extension of $K$ in $\Rational^{\algcl}$
	containing those values and a prime above $\mathfrak{P}$,
	one may take $\chi_A$ to be the restricted character
	$\chi^{\Psi}_{A,\mathfrak{P}}\colon \pi_A\to K_{\mathfrak{P}}$,
	and take $\rho_A$ to be	
	any $\mathrm{SL}(2,\Rational^{\algcl})$--representation with character $\chi_A$.
	The existence of $\rho_A$ given $\chi_A$ is well-known \cite[Chapter 3, Corollary 3.2.4]{Maclachlan--Reid_book},
	and the entries of $\rho_A$ generally live in a finite extension of the trace field.
	Anyway, the representation $\rho_A$ in (\ref{rho_A_algcl}) 
	is unique	up to $\mathrm{Hom}(\pi_A;\{\pm1\})$--equivalence,
	and is Zariski dense as $\rho_B$ is,
	(Lemma \ref{virtual_Frattini_character}).
	Fix a representation $\rho_A$ as in (\ref{rho_A_algcl}).
	Denote by $\mathscr{B}_A$ the invariant quaternion algebra of $\rho_A$
	over the invariant trace field $k$.
	By restricting $\rho_A$, we obtain a representation 
	\begin{equation}\label{rho_natural_A}
	\rho^{\natural}_A\colon\mathrm{Fratt}_2(\pi_A)\to\mathrm{SL}(1,\mathscr{B}_A)
	\end{equation}
	with character $\chi^\natural_A$ as in (\ref{chi_natural_A}).
	
	We verify the asserted properties in Lemma \ref{profinite_correspondence_rho_pA}
	as follows.	
	Because of the claim with (\ref{chi_natural_A}),
	we see $k=k_A=k_B$.
	Moreover, 
	for any prime $\mathfrak{p}$ in $k$
	and for any prime $\mathfrak{P}$ in $K$ above $\mathfrak{p}$,
	$\rho_A$ is bounded at $\mathfrak{P}$ if and only if $\rho_B$ is bounded at $\mathfrak{P}$,
	by Lemma \ref{bounded_character_property} and 
	the relation 
	$\rho^{\Psi}_{A,\mathfrak{P}}=\rho_{B,\mathfrak{P}}\circ\Psi$ 
	in $\mathrm{SL}(2,K_{\mathfrak{P}})$.	
	Hence,
	$\rho^{\natural}_A$ is bounded at $\mathfrak{p}$ 
	if and only if $\rho^{\natural}_B$ is bounded at $\mathfrak{p}$.
	Recall the notations $\rho^\natural_{A,\mathfrak{p}}$, $\rho^\natural_{B,\mathfrak{p}}$,
	$\chi^\natural_{A,\mathfrak{p}}$, and $\chi^\natural_{B,\mathfrak{p}}$
	in the statement of Lemma \ref{profinite_correspondence_rho_pA}.
	In this case, we see that
	$$\chi^\natural_{A,\mathfrak{p}}=\chi^\natural_{B,\mathfrak{p}}\circ\Psi$$
	holds on $\mathrm{Fratt}_2(\widehat{\pi}_A)=\Psi^{-1}(\mathrm{Fratt}_2(\widehat{\pi}_B))$,
	as an equality in $k_{\mathfrak{p}}$.
	
	Therefore, it remains to verify $\mathscr{B}_A\cong\mathscr{B}_B$ over $k$.
	Note that any isomorphism over $k$ extends to an isomorphism
	$\mathscr{B}_A\otimes_k\Rational^{\algcl}\cong \mathscr{B}_B\otimes_k\Rational^{\algcl}$,
	or equivalently, an automorphism of the matrix algebra $\mathrm{Mat}(2,\Rational^{\algcl})$
	over $\Rational^{\algcl}$, 
	which is necessarily a $\mathrm{GL}(2,\Rational^{\algcl})$--conjugation.
	
	If $\mathscr{B}_B$ is ramified at a non-Archimedean place in $k$,
	represented as a prime $\mathfrak{p}$ in $k$,
	$\rho^{\natural}_B$ is necessarily bounded at $\mathfrak{p}$,
	because in this case,
	$\mathrm{SL}(1,\mathscr{B}_{\mathfrak{p}})$	is the group of units
	in the maximal compact subring 
	of $\mathrm{Mat}(1,\mathscr{B}_\mathfrak{p})=\mathscr{B}_{\mathfrak{p}}$ 
	(see \cite[Chapter 2, Exercises 2.6.1 and 2.6.2]{Maclachlan--Reid_book}).
	By construction,
	$\chi^{\natural}_A=\chi^{\Psi}_{A,\mathfrak{P}}$ holds on $\mathrm{Fratt}_2(\pi_A)$ 
	in $K_{\mathfrak{P}}$ for any $\mathfrak{P}$ in $K$ above $\mathfrak{p}$.
	Possibly after passing to a finite extension of $K$ in $\Rational^{\algcl}$
	and a prime above $\mathfrak{P}$,
	we may assume that $\rho^{\natural}_A$ and 
	the restriction of $\rho^{\Psi}_{A,\mathfrak{P}}=\rho_{B,\mathfrak{P}}\circ\Psi$
	are conjugate to each other as representations
	$\mathrm{Fratt}_2(\pi_A)\to \mathrm{SL}(2,K_{\mathfrak{P}})$.
	The complete quaternion algebra $\mathscr{B}_{A,\mathfrak{p}}$
	can be identified with the closure of 
	the $k$--subalgebra spanned by $\rho^{\natural}_A(\mathrm{Fratt}_2(\pi_A))$,
	which is also the $k_{\mathfrak{p}}$--subalgebra
	topologically spanned by 
	the closed subgroup 
	$\rho^{\Psi}_{A,\mathfrak{P}}(\mathrm{Fratt}_2(\widehat{\pi}_A))$.
	There is a similar description of $\mathscr{B}_{B,\mathfrak{p}}$.
	It follows that $\mathscr{B}_{A,\mathfrak{p}}$ and $\mathscr{B}_{B,\mathfrak{p}}$
	are conjugate to each other in $\mathrm{Mat}(2,K_{\mathfrak{P}})$,
	as quaternion algebras over $k_{\mathfrak{p}}$.
	In particular, $\mathscr{B}_A$ is ramified at $\mathfrak{p}$ 
	as $\mathscr{B}_B$ is.
	By symmetry,
	$\mathscr{B}_A$ is ramified $\mathfrak{p}$
	if and only if $\mathscr{B}_B$ is ramified at $\mathfrak{p}$.
	
	If $\mathscr{B}_B$ is ramified at an Archimedean place in $k$,
	represented as a necessarily real embedding $\sigma\colon k\to \Real$,
	the image of the induced character $\chi_{B,\sigma}=\sigma\circ\chi_B$
	is contained in the real interval $[-2,2]$.
	Take a $\Psi$--corresponding pair of finite-index subgroups
	$\pi'_A$ of $\mathrm{Fratt}_2(\pi_A)$ and $\pi'_B$ in $\mathrm{Fratt}_2(\pi_B)$,
	such that $\chi_A(\mathbf{c}_A)=\chi_B(\mathbf{c}_B)$ holds in $k$
	for any $\Psi$--corresponding pair
	of periodic trajectories $\mathbf{c}_A$ in $M'_A$ and $\mathbf{c}_B$ in $M'_B$.
	For example,
	$\pi'_A=\mathrm{Fratt}_2(\pi_A[\mathfrak{P}])$
	and $\pi'_B=\mathrm{Fratt}_2(\pi_B[\mathfrak{P}])$,
	for some prime $\mathfrak{P}$ in $K$,
	with the desired equality as provided in (\ref{same_chi_pt}).
	Note that
	the invariant trace fields $\mathscr{B}_A$ and $\mathscr{B}_B$
	can be identified with the trace fields of the restricted representations
	$\rho'_A\colon \pi'_A\to \mathrm{SL}(1,\mathscr{B}_A)$ of $\rho^{\natural}_A$
	and $\rho'_B\colon \pi'_B\to \mathrm{SL}(1,\mathscr{B}_B)$
	of $\rho^{\natural}_B$, respectively
	(Lemma \ref{virtual_invar_qa}).
	Applying Lemma \ref{quaternion_algebra_pt} to $\rho'_{A,\sigma}=\sigma\circ\rho'_A$,
	we see that $\mathscr{B}_A$ is ramified at $\sigma$
	as $\mathscr{B}_B$ is.
	By symmetry,
	$\mathscr{B}_A$ is ramified at $\sigma$
	if and only if $\mathscr{B}_B$ is ramified at $\sigma$.
	
	We conclude that $\mathscr{B}_A$ and $\mathscr{B}_B$
	have identical sets of ramification places in $k$,
	so they are isomorphic to each other over $k$,
	by the classification of 
	quaternion algebras over number fields (Example \ref{qa_example}).	
	This completes our verification.
\end{proof}

\section{On profinite invariance of volume of representations}\label{Sec-profin_invar_vol}
In this section, we prove an essential case of Theorem \ref{main_vol_EZ},
namely, the case with pseudo-Anosov mapping tori (Lemma \ref{profinite_invariance_vol_pA}).

\begin{lemma}\label{profinite_invariance_vol_pA}
	Assume Hypothesis \ref{Borel_hypothesis_p}.
	Let $\Rational^{\algcl}$ be an algebraic closure of $\Rational$.
	Let $(M_A,M_B,\Psi)$ be a profinite isomorphism setting of pseudo-Anosov mapping tori
	(Convention \ref{profinite_isomorphism_setting_pA}).
	
	Suppose that $\rho_B\colon\pi_B\to \mathrm{SL}(2,\Rational^\algcl)$ 
	is a Zariski dense representation.
	Let $\rho_A\colon \pi_A\to \mathrm{SL}(2,\Rational^\algcl)$ be
	a corresponding representation of $\rho_B$ with respect to $\Psi$,
	with declared properties as in Lemma \ref{profinite_correspondence_rho_pA}.
	Then, for any field embedding $\sigma\colon\Rational^{\algcl}\to \Complex$,
	the following equality holds in $\Real$ up to sign 
	(denoted with a dotted symbol):
	$$\mathrm{Vol}_{\mathrm{SL}(2,\Complex)}\left(M_A,\rho_{A,\sigma}\right)
	\doteq
	\mathrm{Vol}_{\mathrm{SL}(2,\Complex)}\left(M_B,\rho_{B,\sigma}\right).$$
\end{lemma}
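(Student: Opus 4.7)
The plan is to reinterpret the volume formula \eqref{volume_Borel_complex} as a pairing between a group-homology class and the second complex Borel regulator $b_{\Complex}$, and then to show that the two induced $K$-theoretic classes of $(M_A,\rho_A)$ and $(M_B,\rho_B)$ agree up to sign in $K_3(K)\otimes_\Integral\Rational$ for a suitable number field $K$; once this is done, pairing with $b_\Complex$ at the complex place determined by $\sigma$ yields the asserted equality of volumes up to sign.

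First, I would pass to the finite regular covers $M_A^{(0)}\to M_A$ and $M_B^{(0)}\to M_B$ corresponding to the characteristic finite-index subgroups $\mathrm{Fratt}_2(\pi_A)$ and $\mathrm{Fratt}_2(\pi_B)$. These are $\Psi$-corresponding with equal covering degrees, and on them the restricted representations $\rho_A^{\natural},\rho_B^{\natural}$ take values in the common invariant quaternion algebra $\mathscr{B}$ over the common invariant trace field $k$ provided by Lemma \ref{profinite_correspondence_rho_pA}. Choosing a finite Galois extension $K/k$ that splits $\mathscr{B}$ and contains the matrix entries of $\rho_A,\rho_B$, both representations land inside $\mathrm{SL}(2,K)\subset\mathrm{SL}(N,K)$ for $N$ in the stable range, yielding classes
\[
\alpha_A,\alpha_B\in H_3(\mathrm{SL}(N,K);\Integral)\otimes_\Integral\Rational\cong K_3(K)\otimes_\Integral\Rational.
\]
Since the volume of a representation is multiplicative under finite covers, it suffices to prove $\alpha_A=\pm\alpha_B$ in $K_3(K)\otimes_\Integral\Rational$.

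To establish this equality, I would combine Hypothesis \ref{Borel_hypothesis_p} applied to $K$ with the $\Psi$-equivariance of continuous representations. Choose a rational prime $p$ such that Hypothesis \ref{Borel_hypothesis_p} holds for $K$ and $p$, and such that $\rho_B$ is bounded at every prime $\mathfrak{P}$ of $K$ above $p$; since only finitely many $p$ are excluded, infinitely many qualify. For each such $\mathfrak{P}$, the continuous extensions satisfy $\rho^{\Psi}_{A,\mathfrak{P}}=\rho_{B,\mathfrak{P}}\circ\Psi$ as continuous homomorphisms $\widehat{\pi}_A^{(0)}\to\mathrm{SL}(2,O_\mathfrak{P})$. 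Pulling back the locally analytic class $b_{K_\mathfrak{P}}\in H^3_{\mathtt{la}}(\mathrm{GL}(N,O_\mathfrak{P});K_\mathfrak{P})$ to continuous cohomology of $\widehat{\pi}^{(0)}$ and pairing with its continuous fundamental class — which, since $M^{(0)}$ is closed orientable aspherical and hence $\pi_1(M^{(0)})$ is a $\mathrm{PD}_3$ group, generates $H_3^{\mathtt{cont}}(\widehat{\pi}^{(0)};\Integral_p)\cong\Integral_p$ and is matched up to a common sign under $\Psi_*$ — I would obtain
\[
b_p(\alpha_A)\;=\;\pm\,b_p(\alpha_B)\quad\text{in}\quad K_p=\bigoplus_{\mathfrak{P}\mid p}K_\mathfrak{P},
\]
with a single sign independent of $\mathfrak{P}$. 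By Hypothesis \ref{Borel_hypothesis_p}, $b_p$ is injective on $K_3(K)\otimes_\Integral\Rational_p$, forcing $\alpha_A=\pm\alpha_B$ there; since $\Rational_p$ is flat over $\Rational$, the injection $K_3(K)\otimes_\Integral\Rational\hookrightarrow K_3(K)\otimes_\Integral\Rational_p$ carries this equality back to $K_3(K)\otimes_\Integral\Rational$.

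The main technical obstacle lies in executing the previous paragraph rigorously. One must assemble a compatible commutative diagram linking (i) the abstract group (co)homology of $\pi^{(0)}$, in which the $K$-theoretic classes $\alpha_A,\alpha_B$ and the classical pullback of the Borel regulator natively live, (ii) the continuous group cohomology of $\widehat{\pi}^{(0)}$, through which the profinite isomorphism $\Psi$ operates, and (iii) the locally analytic cohomology of $\mathrm{GL}(N,O_\mathfrak{P})$, where $b_{K_\mathfrak{P}}$ resides. The crucial bridge is an identification of the abstract $3$-manifold fundamental class in $H_3(\pi^{(0)};\Integral)$ with the profinite Poincar\'e class in $H_3^{\mathtt{cont}}(\widehat{\pi}^{(0)};\Integral_p)$ that is compatible with the three versions of the pullback of $b_{K_\mathfrak{P}}$; precisely at this junction Hypothesis \ref{Borel_hypothesis_p} is invoked, and this is where the main care of the proof is required.
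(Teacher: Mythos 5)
Your proposal follows the same overall route as the paper: pass to the $2$--Frattini cover, view $\tilde{\rho}_{A*}[M_A^{\natural}]$ and $\tilde{\rho}_{B*}[M_B^{\natural}]$ as classes in $K_3(K)\otimes_\Integral\Rational\cong H_3(\mathrm{SL}(N,K);\Integral)\otimes_\Integral\Rational$, show they agree up to sign by pairing with the $p$--adic Borel regulator and invoking Hypothesis \ref{Borel_hypothesis_p}, then feed the resulting equality into the classical Borel regulator and the volume formula (\ref{volume_Borel_complex}). The overall skeleton of the diagram you describe (abstract cohomology of $\pi^{(0)}$, continuous cohomology of $\widehat{\pi}^{(0)}$, locally analytic cohomology of $\mathrm{GL}(N,O_{\mathfrak{P}})$) is exactly the paper's diagram.

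There is, however, a genuine gap in the step you summarize by ``$\rho^{\Psi}_{A,\mathfrak{P}}=\rho_{B,\mathfrak{P}}\circ\Psi$ as continuous homomorphisms.'' If you read that as the definition of the pullback then it is a tautology, but then you have not connected the class $\alpha_A$ (computed from the abstract $\mathrm{SL}(2,K)$--valued $\rho_A$) to the pairing $\langle \rho^{\Psi,*}_{A,\mathfrak{P}}(b_{K_{\mathfrak{P}}}), [M_A^{(0)}]_p\rangle$. If instead you read it as the $\mathfrak{P}$--adic completion of $\rho_A$, it is not an equality: $\rho_A$ is produced in Lemma \ref{profinite_correspondence_rho_pA} merely as a representation sharing the right character, so its $\mathfrak{P}$--completion is only $\mathrm{GL}(2,K_{\mathfrak{P}})$--conjugate to $\rho_{B,\mathfrak{P}}\circ\Psi$. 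This conjugacy is the sticking point: the intertwiner lies a priori in $\mathrm{GL}(2,K_{\mathfrak{P}})$, and conjugation by such an element is not an endomorphism of the compact group $\mathrm{GL}(N,O_{\mathfrak{P}})$, so it does not obviously act trivially on the locally analytic class $b_{K_{\mathfrak{P}}}\in H^3_{\mathtt{la}}(\mathrm{GL}(N,O_{\mathfrak{P}});K_{\mathfrak{P}})$. Without controlling it you cannot identify the two pullbacks.

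The paper fills this gap with a normalization step you should not omit: using that maximal orders of $\mathrm{Mat}(2,K_{\mathfrak{P}})$ are all $\mathrm{GL}(2,K_{\mathfrak{P}})$--conjugate to $\mathrm{Mat}(2,O_{\mathfrak{P}})$ and that $\mathrm{GL}(2,K)$ is dense in $\mathrm{GL}(2,K_p)$, one factors the conjugating elements as (unit at $p$)\,$\times$\,(global element of $\mathrm{GL}(2,K)$). Replacing $\rho_A,\rho_B$ by $\mathrm{GL}(2,K)$--conjugates $\tilde{\rho}_A,\tilde{\rho}_B$ valued in $\mathrm{SL}(2,O_{\{p\}})$ (this does not change $\alpha_A,\alpha_B$, since inner conjugation is trivial on $H_3$), the residual intertwiners $\tilde{\tau}_{\mathfrak{P}}$ become inner automorphisms of $\mathrm{GL}(2,O_{\mathfrak{P}})$, so $\bigoplus_{\mathfrak{P}}\tilde{\tau}_{\mathfrak{P}}^*$ is the identity on $\bigoplus_{\mathfrak{P}}H^3_{\mathtt{la}}(\mathrm{GL}(N,O_{\mathfrak{P}});K_{\mathfrak{P}})$. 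Only then does the commutative cohomology diagram close up and the pairing computation go through. You gesture at ``the three versions of the pullback'' being the hard part, which is the right instinct, but the concrete missing idea is this integrality normalization of both representations and intertwiners; without it the diagram chase that yields $b_p(\alpha_A)=\pm b_p(\alpha_B)$ is not justified.
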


\begin{proof}
	Given any $\Psi$--corresponding Zariski dense representations $\rho_A$ and $\rho_B$
	in $\mathrm{SL}(2,\Rational^{\algcl})$, 
	there exists some subfield $K$ in $\Rational^{\algcl}$ of finite degree over $\Rational$,
	such that $\mathrm{SL}(2,K)$ contains the images of $\rho_A$ and $\rho_B$.
	Fixing some such $K$, 
	we rewrite $\rho_A\colon\pi_A\to \mathrm{SL}(2,K)$ and $\rho_B\colon\pi_B\to \mathrm{SL}(2,K)$
	for simplicity.
	Restricting to the $2$--Frattini subgroups,
	these representations factor through the representations in the invariant quaternion algebras,
	namely,	$\rho^{\natural}_A$ and $\rho^{\natural}_B$ 
	as in Lemma \ref{profinite_correspondence_rho_pA}.
	
	Fix some prime $p$ in $\Rational$ as follows.
	For any prime $\mathfrak{p}$ in the invariant trace field $k=k_A=k_B$
	above $p$,
	we require that $\rho^\natural_A$ and $\rho^\natural_B$
	are bounded at $\mathfrak{p}$.
	Moreover, we require that Hypothesis \ref{Borel_hypothesis_p} holds for $K$ and $p$.
	Note that the boundedness condition holds for all but finitely many 
	rational primes	$p$,
	with respect to $\rho^\natural_B$, and hence 
	with respect to $\rho^\natural_A$,
	(Lemma \ref{profinite_correspondence_rho_pA}).
	Therefore,
	the Hypothesis \ref{Borel_hypothesis_p} implies the existence of 
	(infinitely many) $p$ as required.
	
	For any prime $\mathfrak{p}$ in $k$ above $p$,	
	and for any prime $\mathfrak{P}$ in $K$ above $\mathfrak{p}$,
	we obtain a commutative diagram of groups as follows.	
	\begin{equation}\label{vol_diagram_group_B}
	\xymatrix{
	\mathrm{Fratt}_2(\pi_A) \ar[rr]^-{\rho^{\natural}_A} \ar[rd]^{\mathrm{incl}} & & 
	\mathrm{SL}(1,\mathscr{B}_A) \ar[rd]^{\mathrm{incl}} \ar[r]^{\mathrm{incl}} & \mathrm{SL}(2,K) \ar[rd]^{\mathrm{incl}}
	\\
	& \mathrm{Fratt}_2(\widehat{\pi}_A) \ar[rr]^-{\rho^{\natural}_{A,\mathfrak{p}}} \ar[d]_{\Psi|}^{\cong}
	& & \mathrm{SL}(1,\mathscr{B}_{A,\mathfrak{p}}) 
	\ar[r]^{\mathrm{incl}} & \mathrm{SL}(2,K_{\mathfrak{P}}) \ar[d]^{\tau_{\mathfrak{P}}}_{\cong}
	\\
	& \mathrm{Fratt}_2(\widehat{\pi}_B) \ar[rr]^-{\rho^{\natural}_{B,\mathfrak{p}}} 
	& & \mathrm{SL}(1,\mathscr{B}_{B,\mathfrak{p}}) 
	\ar[r]^{\mathrm{incl}} & \mathrm{SL}(2,K_{\mathfrak{P}}) 
	\\
	\mathrm{Fratt}_2(\pi_B) \ar[rr]^-{\rho^{\natural}_B} \ar[ru]^{\mathrm{incl}} & & \mathrm{SL}(1,\mathscr{B}_B)	\ar[ru]^{\mathrm{incl}}
	\ar[r]^{\mathrm{incl}} & \mathrm{SL}(2,K) \ar[ru]^{\mathrm{incl}}
	}
	\end{equation}
	The isomorphism $\tau_{\mathfrak{P}}$ is 
	the restriction of some inner automorphism of $\mathrm{GL}(2,K_{\mathfrak{P}})$.
	It exists because $\chi^\natural_{A,\mathfrak{p}}=\chi^{\natural}_{B,\mathfrak{p}}\circ\Psi$
	holds on $\mathrm{Fratt}_2(\pi_A)$,
	valued in $k$ and hence in $K_{\mathfrak{P}}$ (Lemma \ref{irr_qachi}).
	
	The intermediate terms in the diagram (\ref{vol_diagram_group_B})
	involving invariant quaternion algebras are no longer needed in the rest of the proof.
	On the other hand, 
	the noncompact group $\mathrm{SL}(2,K_{\mathfrak{P}})$ is a bit too large
	for applying available locally analytic or continuous group cohomology theory.
	We need some manipulation with the diagram (\ref{vol_diagram_group_B}).
	To this end, we denote by $O=O_K$ the ring of integers in $K$.
	We adopt the notations
	$$O_{p}=O\otimes_\Integral\Integral_p\cong\prod_{\mathfrak{P}}O_{\mathfrak{P}},$$
	and 
	$$O_{\{p\}}=K\cap O_p,$$ 
	as viewed in
	$$K_p=K\otimes_\Rational\Rational_p\cong \prod_{\mathfrak{P}}K_{\mathfrak{P}},$$
	where $\mathfrak{P}$ ranges over all the primes in $K$ above $p$.
	
	We claim that 
	there are representations $\tilde{\rho}_A\colon \pi_A\to \mathrm{SL}(2,O_{\{p\}})$,
	and $\tilde{\rho}_B\colon \pi_B\to \mathrm{SL}(2,O_{\{p\}})$,
	and isomorphisms
	$\tilde{\tau}_{\mathfrak{P}}
	\colon\mathrm{SL}(2,O_{\mathfrak{P}})\to \mathrm{SL}(2,O_{\mathfrak{P}})$
	for all ${\mathfrak{P}}$ above $p$,	
	as follows.
	The representation $\tilde{\rho}_{A/B}$ is conjugate to $\rho_{A/B}$ in $\mathrm{GL}(2,K)$,
	(where the notation $A/B$ means $A$ or $B$ respectively);
	each isomorphism $\tilde{\tau}_{\mathfrak{P}}$ is the restriction of an inner automorphism
	of $\mathrm{GL}(2,O_{\mathfrak{P}})=\mathrm{Mat}(2,O_{\mathfrak{P}})\cap\mathrm{GL}(2,K_{\mathfrak{P}})$;
	and moreover, $\tilde{\rho}_{A}$, $\tilde{\rho}_{B}$ and each $\tilde{\tau}_{\mathfrak{P}}$
	fit into the following commutative diagram of groups.
	\begin{equation}\label{vol_diagram_group_O}
	\xymatrix{
	\mathrm{Fratt}_2(\pi_A) \ar[rr]^-{\tilde{\rho}_A|} \ar[d]_{\mathrm{incl}} & & 
	 \mathrm{SL}\left(2,O_{\{p\}}\right) \ar[d]^{\mathrm{incl}}
	\\
	\mathrm{Fratt}_2(\widehat{\pi}_A) \ar[rr]^-{\tilde{\rho}_{A,{\mathfrak{P}}}|} \ar[d]_{\Psi|}^{\cong}
	& & \mathrm{SL}(2,O_{\mathfrak{P}}) \ar[d]^{\tilde{\tau}_{\mathfrak{P}}}_{\cong}
	\\
	\mathrm{Fratt}_2(\widehat{\pi}_B) \ar[rr]^-{\tilde{\rho}_{B,{\mathfrak{P}}}|} 
	& &  \mathrm{SL}(2,O_{\mathfrak{P}}) 
	\\
	\mathrm{Fratt}_2(\pi_B) \ar[rr]^-{\tilde{\rho}_B|} \ar[u]^{\mathrm{incl}} & & 
	\mathrm{SL}\left(2,O_{\{p\}}\right) \ar[u]_{\mathrm{incl}}
	}
	\end{equation}
	
	We prove the above claim as follows.
	For every prime $\mathfrak{P}$ in $K$ above $p$,
	since $K_{\mathfrak{P}}$ is local and complete,
	every maximal order in the quaternion algebra $\mathrm{Mat}(2,K_{\mathfrak{P}})$ over $O_{\mathfrak{P}}$
	is conjugate to $\mathrm{Mat}(2,O_{\mathfrak{P}})$
	\cite[Chapter 6, Theorem 6.5.3]{Maclachlan--Reid_book}.
	In particular, there exists some $S_{\mathfrak{P}}\in\mathrm{GL}(2,K_{\mathfrak{P}})$,
	such that $S_{\mathfrak{P}}\rho_B(g)S^{-1}_{\mathfrak{P}}\in \mathrm{SL}(2,O_{\mathfrak{P}})$ 
	holds for every	$g\in\pi_B$.
	Indeed, one may require that 
	$S_{\mathfrak{P}}$ conjugates 
	the order $O_{\mathfrak{P}}[\{\rho_B(g)\colon g\in\pi_B\}]$
	into $\mathrm{Mat}(2,O_{\mathfrak{P}})$. 
	Recall that the group $\mathrm{GL}(2,K_{\mathfrak{P}})$
	is topologized as a subspace of 
	$\mathrm{Mat}(2,K_{\mathfrak{P}})\times \mathrm{Mat}(2,K_{\mathfrak{P}})$,
	with respect to the embedding $A\mapsto (A,A^{-1})$.
	The direct product $\mathrm{GL}(2,K_p)=\prod_{{\mathfrak{P}}|p} \mathrm{GL}(2,K_{\mathfrak{P}})$
	contains an open subgroup
	$\mathrm{GL}(2,O_p)$,
	and a dense subgroup $\mathrm{GL}(2,K)$.
	Therefore,
	the direct product $S_p=\prod_{{\mathfrak{P}}|p } S_{\mathfrak{P}}$ 
	in $\mathrm{GL}(2,K_p)$
	can be factorized as $S'_pS''$,
	for some $S'_p\in\mathrm{GL}(2,O_p)$
	and some $S''\in \mathrm{GL}(2,K)$.
	We obtain $\tilde{\rho}_B$ 
	as the conjugate of $\rho_B$ by $S''$.
	Then the image of $\tilde{\rho}_B$ lies in $\mathrm{SL}(2,O_p)$.
	Let $T_p\in\mathrm{GL}(2,K_p)$
	be a matrix that defines
	the direct-product automorphism $\tau_p=\prod_{{\mathfrak{P}} | p} \tau_{\mathfrak{P}}$
	on $\mathrm{SL}(2,O_p)\cong \prod_{{\mathfrak{P}} | p} \mathrm{SL}(2,O_{\mathfrak{P}})$,
	(namely, $\tau_p(\rho_A(g))=T_p\rho_A(g)T_p^{-1}$ for every $g\in\pi_A$).
	We factorize $T_p$ into $T'_pT''$,
	with some $T'_p\in\mathrm{GL}(2,O_p)$
	and some $T''\in \mathrm{GL}(2,K)$.
	We obtain $\tilde{\rho}_A$
	as the conjugate of $\rho_A$ by $T''$,
	and obtain $\tilde{\tau}_p$
	as the conjugation by $(S'_p)^{-1}T'_p\in\mathrm{GL}(2,O_p)$.
	By construciton,
	each direct factor $\tilde{\tau}_{\mathfrak{P}}$ of $\tilde{\tau}_p$
	is the restriction of an inner automorphism in $\mathrm{GL}(2,O_{\mathfrak{P}})$,
	and the image of $\tilde{\rho}_A$ lies in
	$\mathrm{GL}(2,O_p)\cap\mathrm{GL}(2,K)=\mathrm{GL}(2,O_{\{p\}})$,
	as desired. This proves the claim.
	
	We can replace the target groups in the diagram (\ref{vol_diagram_group_O})
	without affecting commutativity,
	using the standard subgroup embeddings
	$\mathrm{SL}(2,O_{\{p\}})\to \mathrm{GL}(N,O_{\{p\}})$
	and $\mathrm{SL}(2,O_{\mathfrak{P}})\to \mathrm{GL}(N,O_{\mathfrak{P}})$
	for sufficiently large $N$.
	Below we actually fix $N=7$ for explicity,
	(according to Remarks \ref{Borel_theorem_infinity_remark} and \ref{Borel_hypothesis_p_remark}).
	We decorate with a subscript `$\stable$' (as abbreviate for \emph{stable})
	to indicate a representation with the enlarged target group.
	Pass to suitable versions of group cohomology from the diagram (\ref{vol_diagram_group_O}).
	Make sure that the regularity levels (locally $K_{\mathfrak{P}}$--analytic, continuous, or abstract)
	do not get raised along the arrows,
	and the coefficient modules get naturally embedded.
	Take the direct product over all $\mathfrak{P}$ above $p$.
	Then, we obtain	a commutative diagram of $K_p$--modules:
	\begin{equation}\label{vol_diagram_cohomology_O}
	\xymatrix{
	H^3(\mathrm{Fratt}_2(\pi_A);K_p) 
	& &
	H^3\left(\mathrm{GL}\left(7,O_{\{p\}}\right);K_p\right) 
	\ar[ll]_-{ \tilde{\rho}_{A,\stable}^* } 
	\\
	H^3_{\mathtt{cont}}(\mathrm{Fratt}_2(\widehat{\pi}_A);K_p)
	\ar[u]^{ \mathrm{incl}^* } 
	& &
	\bigoplus_{\mathfrak{P}}
	H^3_{\mathtt{la}}\left(\mathrm{GL}(7,O_{\mathfrak{P}});K_{\mathfrak{P}}\right) 
	\ar[u]_{ \sum_{\mathfrak{P}} \mathrm{incl}^* }
	\ar[ll]_-{ \sum_{\mathfrak{P}} \tilde{\rho}_{A,\stable,{\mathfrak{P}}}^* } 
	\\
	H^3_{\mathtt{cont}}(\mathrm{Fratt}_2(\widehat{\pi}_B);K_p)
	\ar[u]^{ \Psi^* }_{ \cong }
	\ar[d]_{ \mathrm{incl}^* }
	& &
	\bigoplus_{\mathfrak{P}}
	H^3_{\mathtt{la}}\left(\mathrm{GL}(7,O_{\mathfrak{P}});K_{\mathfrak{P}}\right) 
	\ar[ll]_-{ \sum_{\mathfrak{P}} \tilde{\rho}_{B,\stable,{\mathfrak{P}}}^* } 
	\ar[u]_{ \bigoplus_{\mathfrak{P}} \tilde{\tau}_{\mathfrak{P}}^* }^{ \cong }
	\ar[d]^{ \sum_{\mathfrak{P}} \mathrm{incl}^* }
	\\
	H^3(\mathrm{Fratt}_2(\pi_B);K_p) 
	& &
	H^3\left(\mathrm{GL}\left(7,O_{\{p\}}\right);K_p\right)
	\ar[ll]_-{ \tilde{\rho}_{B,\stable}^* }  
	}
	\end{equation}
	
	To be precise, the continuous group cohomology
	$H^3_{\mathtt{cont}}(\mathrm{Fratt}_2(\widehat{\pi}_{A/B});K_p)$
	in the diagram (\ref{vol_diagram_cohomology_O})
	is legal because 
	$\mathrm{Fratt}_2(\widehat{\pi}_{A/B})$
	is profinite of type $p$--$\mathrm{FP}_\infty$ \cite[Section 3.8]{Symonds--Weigel_cohomology}.
	This property follows from the fact that 
	finitely generated $3$--manifold groups are all cohomologically good 
	and of type $\mathrm{FP}_\infty$ (see \cite[Section 2.1]{Liu_profinite_almost_rigidity}).
	
	More explicitly, for any cohomologically good $\mathrm{FP}_\infty$ group $\pi$,
	one may take a projective $[\Integral \pi]$--resolution
	$P_\bullet\to\Integral$ by finitely generated free left $[\Integral \pi]$--modules
	(for example, as given by a cell decomposition lifted to the universal cover
	in the aspherical compact $3$--manifold group case).
	Then 
	$\llbracket \widehat{\Integral} \widehat{\pi} \rrbracket\otimes_{[\Integral \pi]}P_\bullet\to \widehat{\Integral}$
	is a projective $\llbracket \widehat{\Integral}\widehat{\pi}\rrbracket$--resolution
	of (the trivial profinite $\llbracket \widehat{\Integral}\widehat{\pi}\rrbracket$--module) $\widehat\Integral$,
	which we rewrite as $\widehat{P}_\bullet$ for simplicity.
	By definition and cohomological goodness,
	the continuous group homology 
	(also known as the profinite group homology in \cite{Ribes--Zalesskii_book})
	$H_*^{\mathtt{cont}}(\widehat{\pi};\Integral_p)$
	and the continuous group cohomology
	$H^*_{\mathtt{cont}}(\widehat{\pi};\Integral_p)$
	are naturally isomorphic to the homologies of
	$\widehat{P}_\bullet\otimes_{ \llbracket \hat{\Integral}\hat{\pi}\rrbracket }\Integral_p$
	and
	$\mathrm{Hom}_{ \llbracket \hat{\Integral}\hat{\pi}\rrbracket }(\widehat{P}_\bullet,\Integral_p)$,
	respectively,
	\cite[Section 3.7, (3.7.10)]{Symonds--Weigel_cohomology}
	(see also \cite[Proposition 3.1]{JZ}).
	The (rational) continuous group cohomology
	$H^*_{\mathtt{cont}}(\widehat{\pi};K_p)$ 
	is naturally isomorphic to
	$H^*_{\mathtt{cont}}(\widehat{\pi};\Integral_p)\otimes_{ \Integral_p } K_p$
	\cite[Section 3.8, Theorem 3.8.2]{Symonds--Weigel_cohomology}.	
	
	Hence,
	we observe natural isomorphisms
	$H^3_{\mathtt{cont}}(\mathrm{Fratt}_2(\widehat{\pi}_{A/B});K_p)
	\cong
	H^3(\mathrm{Fratt}_2(\pi_{A/B});K_p)$,
	and $H_3(\mathrm{Fratt}_2(\pi_{A/B}); \Integral_p)
	\cong
	H_3^{\mathtt{cont}}( \mathrm{Fratt}_2(\widehat{\pi}_{A/B}); \Integral_p)$,
	as induced by
	the inclusion $\mathrm{Fratt}_2(\pi_{A/B})\to \mathrm{Fratt}_2(\widehat{\pi}_{A/B})$.
	(The natural homomorphism 
	$H_3(\mathrm{Fratt}_2(\pi_{A/B}); \Integral) \otimes_\Integral \Integral_p
	\to H_3(\mathrm{Fratt}_2(\pi_{A/B});\Integral_p)$
	is actually also isomorphic,
	because $H_2(\mathrm{Fratt}_2(\pi_{A/B});\Integral)$ has no torsion.)
	Accordingly, the natural pairing
	\begin{equation}\label{pairing_continuous_K_p}
	H^3_{\mathtt{cont}}(\mathrm{Fratt}_2(\widehat{\pi}_{A/B});K_p) \times 
	H_3^{\mathtt{cont}}(\mathrm{Fratt}_2(\widehat{\pi}_{A/B});\Integral_p)\longrightarrow K_p,
	\end{equation}
	is well-defined, and is compatible with the natural pairing
	\begin{equation}\label{pairing_abstract_K_p}
	H^3(\mathrm{Fratt}_2(\pi_{A/B});K_p) \times 
	H_3(\mathrm{Fratt}_2(\pi_{A/B});\Integral)\longrightarrow K_p.
	\end{equation}
	
	
	The locally analytic cohomology 
	$\bigoplus_{\mathfrak{P}}
	H^3_{\mathtt{la}}(\mathrm{GL}(7,O_{\mathfrak{P}});K_{\mathfrak{P}})$
	in the diagram (\ref{vol_diagram_cohomology_O})
	is where our $p$--adic Borel regulator element $b_p$ lives (see (\ref{Borel_p_k})).
	Because the isomorphisms 
	$\tilde{\tau}_{\mathfrak{P}}$ in the diagram (\ref{vol_diagram_group_O})
	all come from $\mathrm{GL}(2,O_{\mathfrak{P}})$ inner automorphisms,
	we see that the isomorphism 
	$\bigoplus_{\mathfrak{P}}\tilde{\tau}_{\mathfrak{P}}^*$
	in the diagram (\ref{vol_diagram_cohomology_O}) is the identity.
	
	Denote by $M_{A/B}^\natural\to M_{A/B}$
	the regular finite cover
	corresponding to the $2$--Frattini subgroup of $\pi_{A/B}$.
	Fixing orientations of $M_A$ and $M_B$,
	we denote by $[M_{A/B}^{\natural}] \in H_3(\mathrm{Fratt}_2(\pi_{A/B});\Integral)$
	the fundamental class,
	and by $[M_{A/B}^{\natural}]_p \in H^{\mathtt{cont}}_3(\mathrm{Fratt}_2(\widehat{\pi}_{A/B});\Integral_p)$
	its image under the natural inclusion.
	By Lemmas \ref{Mu_cube} and \ref{p_regularity},
	we obtain the relation 
	$\Psi_*[M_A^{\natural}]_p=\pm [M_B^{\natural}]_p$.
	Chase the $p$--adic Borel regulator element $b_p$
	(see (\ref{Borel_p_k}))
	on the diagram (\ref{vol_diagram_cohomology_O}).
	Note that $\bigoplus_{\mathfrak{P}}\tilde{\tau}_{\mathfrak{P}}^*$ is the identity.
	Then we obtain
	$$
	\left\langle \sum_\mathfrak{P}\tilde{\rho}^*_{A,\stable,\mathfrak{P}}(b_p),\,[M_A^{\natural}]_p \right\rangle
	=
	\pm \left\langle \sum_\mathfrak{P}\tilde{\rho}^*_{B,\stable,\mathfrak{P}}(b_p),\,[M_B^{\natural}]_p \right\rangle,
	$$
	with respect to the natural pairing 
	(\ref{pairing_continuous_K_p}),
	and hence
	$$
	\left\langle \tilde{\rho}^*_{A,\stable}(b_p),\,[M_A^{\natural}]\right\rangle 
	=
	\pm \left\langle \tilde{\rho}^*_{B,\stable}(b_p),\,[M_B^{\natural}]\right\rangle,$$
	with respect to the natural pairing (\ref{pairing_abstract_K_p}).
	Note that the last equality holds in $K_p$,
	consistent with the coefficient ring of the cohomology classes in the pairing,
	while the fundamental classes are always 
	homology classes with $\Integral$ coefficient as usual.

	Remember that we use the same notation $b_p$ for both 
	the $p$-adic Borel regulator element (\ref{Borel_p_k}) 
	and its interpretation as a map (\ref{Borel_p_k_map}).
	Rewriting with the $p$--adic Borel regulator map (\ref{Borel_p_k_map}),
	we obtain the relation
	$$b_p\left(\tilde{\rho}_{A*}[M_A^{\natural}]\right)=\pm b_p\left(\tilde{\rho}_{B*}[M_B^{\natural}]\right)$$
	in $K_p$,
	treating $\tilde{\rho}_{A/B*}[M_{A/B}^{\natural}]$
	as in $H_3(\mathrm{SL}(7,O_{\{p\}});\Integral)$.
	By our assumption on $p$ at the beginning of the proof (Hypothesis \ref{Borel_hypothesis_p}),
	we obtain the relation 
	\begin{equation}\label{equal_rho_fundamental_class}
	\tilde{\rho}_{A*}[M_A^\natural]=\pm \tilde{\rho}_{B*}[M_B^\natural]
	\end{equation}
	in 
	$H_3(\mathrm{SL}(7,O_{\{p\}});\Integral)\otimes_\Integral\Rational_p
	\cong
	H_3(\mathrm{SL}(7,K);\Integral)\otimes_\Integral\Rational_p$,
	(see Remarks \ref{Borel_theorem_infinity_remark} and \ref{Borel_hypothesis_p_remark}
	for the last natural isomorphism).
	
	The relation (\ref{equal_rho_fundamental_class}) is the key to our whole argument.
	It also explains why Hypothesis \ref{Borel_hypothesis_p} matters to us.
	Applying the classical Borel regulator map associated to $K$,
	we obtain the relation
	$$
	\left\langle \tilde{\rho}_{A,\stable}^*(b_\infty),\,[M_A^{\natural}] \right\rangle
	=
	\pm \left\langle \tilde{\rho}_{B,\stable}^*(b_\infty),\,[M_B^{\natural}] \right\rangle,
	$$
	with respect to the natural pairing
	$H^3(\mathrm{Fratt}_2(\pi_{A/B});K_\infty)\times
	H_3(\mathrm{Fratt}_2(\pi_{A/B});\Integral)\to K_\infty$,
	(see (\ref{Borel_infinity_k}) and  (\ref{Borel_infinity_k_map})).
		
	For any embedding $\sigma\colon K\to\Complex$ representing a complex place $v$ of $K$,
	we infer 
	\begin{equation}\label{vol_removing_tilde}
	\mathrm{Vol}(M_A^{\natural},\rho_{A,\sigma})=
	\mathrm{Vol}(M_A^{\natural},\tilde{\rho}_{A,\sigma})=
	\pm\mathrm{Vol}(M_B^{\natural},\tilde{\rho}_{B,\sigma})=
	\pm\mathrm{Vol}(M_B^{\natural},\rho_{B,\sigma}).
	\end{equation}
	In (\ref{vol_removing_tilde}), we have dropped 
	the subscript in $\mathrm{Vol}_{\mathrm{SL}(2,\Complex)}$
	for simplicity;
	the middle step follows from the above relation in $K_\infty$
	and the volume formula (\ref{volume_Borel_complex}),
	using the ring homomorphism $K_\infty\to K_v\to \Complex$
	(namely, the factor projection followed by the completion of $\sigma$);	
	the first and the last steps are because
	volume of representations is $\mathrm{SL}(2,\Complex)$--conjugation invariant
	\cite[Proposition 2.3]{DLSW_rep_vol},
	and hence $\mathrm{GL}(2,\Complex)$--conjugation invariant.
	Note that (\ref{vol_removing_tilde})
	also holds when $\sigma$ factors through a real embedding,
	since the volume in that case is always zero.
	
	Therefore, for any field embedding $\sigma\colon K\to \Complex$
	(coming from an embedding of $\Rational^\algcl$ as in Lemma \ref{profinite_invariance_vol_pA}),
	we obtain
	\begin{equation}\label{vol_AB}
	\mathrm{Vol}(M_A,\rho_{A,\sigma})=
	\frac{\mathrm{Vol}\left(M_A^\natural,\rho_{A,\sigma}\right)}{[M_A^\natural:M_A]}=
	\frac{\pm\mathrm{Vol}\left(M_B^\natural,\rho_{B,\sigma}\right)}{[M_B^\natural:M_B]}=
	\pm\mathrm{Vol}(M_B,\rho_{B,\sigma}),
	\end{equation}
	as asserted.
	In (\ref{vol_AB}),
	the middle step in (\ref{vol_AB}) follows from (\ref{vol_removing_tilde})
	and the covering degree equality $[M_A^\natural:M_A]=[M_B^\natural:M_B]$;
	the first and the last steps in (\ref{vol_AB}) 
	follow from the proportionality of volume of representations
	for pullbacks to finite covers (see \cite[Proposition 3.1]{DLSW_rep_vol}).
 	
	This completes the proof of Lemma \ref{profinite_invariance_vol_pA}.
	As a final remark,
	the sign ambiguity in (\ref{vol_AB}) 
	arises not only because we have fixed
	orientations of the $3$--manifolds arbitrarily, 
	but also because 
	the sign might vary as the prime $p$ varies.
\end{proof}

\section{Main theorems}\label{Sec-main_theorems}
In this section, we prove the main results as mentioned in the introduction.
Theorem \ref{main_profinite_correspondence_PSL} is the precise form of Theorem \ref{main_rep_EZ},
and
Theorem \ref{main_profinite_invariance_vol} is the precise form of Theorem \ref{main_vol_EZ}.

Let $\Rational^\algcl$ be an algebraic closure of $\Rational$.
For any group $\pi$,
a representation $\rho\colon\pi\to\mathrm{PSL}(2,\Rational^\algcl)$
is said to be \emph{Zariski dense} if its image is Zariski dense
in the $\Rational^\algcl$--algebraic group 
$\mathrm{PSL}(2,\Rational^\algcl)\cong\mathrm{SO}(3,\Rational^\algcl)$.
In this case,
the \emph{invariant trace field} and the \emph{invariant quaternion algebra}
 of $\rho$ refer to 
the invariant trace field in $\Rational^\algcl$
and the invariant quaternion algebra in $\mathrm{Mat}(2,\Rational^\algcl)$ 
of the preimage of $\rho(\pi)$ in $\mathrm{SL}(2,\Rational^\algcl)$,
which is a central extension of $\rho(\pi)$ by $\{\pm I\}$. 
(See Remark \ref{invar_qa_remark}.)

\begin{theorem}\label{main_profinite_correspondence_PSL}
	Let $\pi_A$ and $\pi_B$ 
	be fundamental groups of orientable closed hyperbolic $3$--manifolds.
	Suppose that
	$\Psi\colon \widehat{\pi}_A\to\widehat{\pi}_B$
	is a group isomorphism between the profinite completions.
	Let $\Rational^\algcl$ be an algebraic closure of $\Rational$.
	Denote by $\mathcal{Y}_A$ 
	the set of conjugacy classes of 
	the Zariski dense 
	$\mathrm{PSL}(2,\Rational^{\algcl})$--representations	
	of $\pi_A$,
	and similarly $\mathcal{Y}_B$ with $\pi_B$.
	
	Then, there is a bijective map
	$\eta_\Psi\colon \mathcal{Y}_B\to \mathcal{Y}_A$,
	depending only on $\pi_A$, $\pi_B$, and $\Psi$.
	Moreover, 
	if
	$\rho_A\colon\pi_A\to \mathrm{PSL}(2,\Rational^{\algcl})$
	and
	$\rho_B\colon\pi_B\to \mathrm{PSL}(2,\Rational^{\algcl})$
	are Zariski dense representations 
	of $\eta_\Psi$--corresponding conjugacy classes,
	then they have identical invariant trace fields 
	$k(\rho_A)=k(\rho_B)$ in $\Rational^{\algcl}$,
	and have conjugate invariant quaternion algebras
	$\mathscr{B}(\rho_A)$ and $\mathscr{B}(\rho_B)$
	in $\mathrm{Mat}(2,\Rational^\algcl)$.
\end{theorem}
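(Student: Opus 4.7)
The plan is to reduce the theorem to the pseudo-Anosov mapping torus case treated in Lemma \ref{profinite_correspondence_rho_pA}, by passing to a suitable finite-index subcover, and then to descend the resulting correspondence back to the full groups, using triviality of the centralizer of a Zariski dense subgroup in $\mathrm{PSL}(2,\Rational^{\algcl})$.

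Given a Zariski dense $\rho^\flat_B\colon\pi_B\to\mathrm{PSL}(2,\Rational^{\algcl})$, I first choose a finite-index normal subgroup $\pi'_B\triangleleft\pi_B$ with two properties: (i) the corresponding cover $M'_B$ is a pseudo-Anosov mapping torus, available by Agol's virtual fibering theorem; and (ii) the second Stiefel--Whitney obstruction $w_2(\rho^\flat_B)\in H^2(\pi_B;\Integral/2\Integral)$ restricts trivially to $\pi'_B$, which is possible since a central $\Integral/2\Integral$-extension of a residually finite group is again residually finite. By Remark \ref{PSL_rep_vol_remark}, $\rho^\flat_B|_{\pi'_B}$ then lifts to a Zariski dense $\rho'_B\colon\pi'_B\to\mathrm{SL}(2,\Rational^{\algcl})$. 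The $\Psi$--corresponding subgroup $\pi'_A=\Psi^{-1}(\overline{\pi'_B})\cap\pi_A$ gives rise to a cover $M'_A$ which, by the profinite correspondence of fiberedness from \cite{Liu_profinite_almost_rigidity}, is also a pseudo-Anosov mapping torus, and $(M'_A,M'_B,\Psi|)$ forms a profinite isomorphism setting of pseudo-Anosov mapping tori in the sense of Convention \ref{profinite_isomorphism_setting_pA}. Lemma \ref{profinite_correspondence_rho_pA} then produces a Zariski dense $\rho'_A\colon\pi'_A\to\mathrm{SL}(2,\Rational^{\algcl})$ with identical invariant trace field $k$ and $\mathrm{GL}(2,\Rational^{\algcl})$--conjugate invariant quaternion algebra $\mathscr{B}$.

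The central task is to extend $\rho^\flat_A|_{\pi'_A}:=\mathbf{P}(\rho'_A)$ from $\pi'_A$ to all of $\pi_A$. Fix a sufficiently large finite extension $K\supset k$ over which $\rho'_B$ is realized. For any non-archimedean prime $\mathfrak{P}$ of $K$ at which $\rho'_B$ is bounded (cofinitely many), the image $\rho^\flat_B(\pi_B)=\rho^\flat_B(\pi'_B)\cdot(\pi_B/\pi'_B)$ is a finite union of cosets of a bounded subgroup and is thus bounded in $\mathrm{PSL}(2,K_{\mathfrak{P}})$; hence $\rho^\flat_B$ extends continuously to $\widehat{\pi}_B\to\mathrm{PSL}(2,K_{\mathfrak{P}})$. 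Pulling back by $\Psi$ and restricting to $\pi_A$ produces a candidate $\rho^\flat_A\colon\pi_A\to\mathrm{PSL}(2,K_{\mathfrak{P}})$. To see its image lies in $\mathrm{PSL}(2,\Rational^{\algcl})$, I observe that for each $g\in\pi_A$ the inner automorphism $\mathrm{Ad}(\rho^\flat_A(g))$ preserves the Zariski dense subgroup $\rho^\flat_A(\pi'_A)\subset\mathrm{PSL}(2,\Rational^{\algcl})$ and therefore restricts to an algebraic automorphism of $\mathrm{PSL}_2$ over $\Rational^{\algcl}$; since every algebraic automorphism of $\mathrm{PSL}_2$ is inner and $\mathrm{PSL}_2$ is centerless, this forces $\rho^\flat_A(g)\in\mathrm{PSL}(2,\Rational^{\algcl})$. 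Independence of the auxiliary choices of $K$, $\mathfrak{P}$, and $\pi'_B$ follows because any two candidate extensions agree on $\pi'_A$ and implement the same outer automorphism on the Zariski dense image $\rho^\flat_A(\pi'_A)$, whose centralizer in $\mathrm{PSL}(2,\Rational^{\algcl})$ is trivial.

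The extension $\rho^\flat_A$ is automatically Zariski dense, and its invariant trace field and invariant quaternion algebra coincide with those of its restriction to $\pi'_A$ (Corollary \ref{virtual_invar_qa}), hence equal $k$ and $\mathscr{B}$. The $\mathrm{PSL}(2,\Rational^{\algcl})$--conjugacy class of $\rho^\flat_A$ depends only on that of $\rho^\flat_B$: ambiguity in the SL lift $\rho'_B$ is a $\mathrm{Hom}(\pi'_B,\{\pm 1\})$--twist that projects trivially, Lemma \ref{profinite_correspondence_rho_pA} is natural up to $\mathrm{Hom}(\pi'_A,\{\pm 1\})$--equivalence (which again projects trivially), and different choices of $\pi'_B$ can be compared by passing to a common refinement. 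The mirror-image construction applied to $\Psi^{-1}$ yields an inverse, giving bijectivity of $\eta_\Psi$. I expect the main obstacle to be the extension step: precisely verifying that the continuously extended projective pullback takes values in $\mathrm{PSL}(2,\Rational^{\algcl})$ rather than merely in $\mathrm{PSL}(2,K_\mathfrak{P})$, and that the resulting extension is independent of all auxiliary choices; this ultimately rests on the naturality of Lemma \ref{profinite_correspondence_rho_pA} together with the rigidity properties of Zariski dense subgroups of $\mathrm{PSL}_2$.
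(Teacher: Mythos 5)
Your overall strategy closely mirrors the paper's: reduce to a $\Psi$--corresponding pair of fibered finite covers, lift $\rho^\flat_B$ to an $\mathrm{SL}(2,\Rational^{\algcl})$--representation there, apply Lemma~\ref{profinite_correspondence_rho_pA}, and then propagate back to the full groups using the rigidity of Zariski dense $\mathrm{PSL}_2$--representations under adjoint/normalizer considerations. Your extension step (conjugation normalizing a Zariski dense subgroup of $\mathrm{PSL}(2,\Rational^{\algcl})$ forces membership in $\mathrm{PSL}(2,\Rational^{\algcl})$, via inner-ness of algebraic automorphisms and triviality of the centralizer) is essentially the paper's ``second observation,'' phrased via normalizers instead of $\mathrm{Ad}$--characters; that part is fine.

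The genuine gap is in your justification for step (ii), namely the assertion that ``a central $\Integral/2\Integral$--extension of a residually finite group is again residually finite.'' That statement is false for general residually finite groups: by work of Deligne, finite central extensions of certain arithmetic lattices (e.g.\ congruence subgroups of $\mathrm{Sp}(2n,\Integral)$, $n\geq 2$) fail to be residually finite, and in fact the residual finiteness of such an extension is \emph{equivalent} to the vanishing of the class on a finite-index subgroup, so as written the argument is circular. What actually saves the conclusion here is that closed $3$--manifold groups are \emph{cohomologically good} in the sense of Serre (cited in Section~\ref{Sec-profin_invar_vol} via \cite{Liu_profinite_almost_rigidity}): goodness implies $H^2(\pi_B;\Integral/2\Integral)\cong H^2(\widehat{\pi}_B;\Integral/2\Integral)$, so the central extension embeds in a profinite group and is therefore residually finite. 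You should invoke that, not bare residual finiteness of $\pi_B$. The paper avoids the issue altogether by taking $\pi'_B$ inside $\mathrm{Fratt}_2(\pi_B)$ and observing directly (via $\Integral/2\Integral$--Poincar\'e duality and the vanishing of $H_1(\mathrm{Fratt}_2(\pi);\Integral/2\Integral)\to H_1(\pi;\Integral/2\Integral)$) that $w_2$ restricts trivially there, which is both cleaner and sidesteps the need to produce an extra finite cover.
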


\begin{proof}
	We establish the asserted map $\eta_\Psi$ as follows.	
	For any Zariski dense representation $\rho_B\colon\pi_B\to \mathrm{PSL}(2,\Rational^\algcl)$,
	the induced representation $\rho_{B,\sigma}\colon \pi_B\to \mathrm{PSL}(2,\Complex_p)$
	is bounded 
	for all but finitely many rational primes $p$ and 
	for every embedding 
	$\sigma\colon\mathrm{PSL}(2,\Rational^\algcl)\to \mathrm{PSL}(2,\Complex_p)$.
	Here $\Complex_p$ denotes the $p$--adic complex field,
	which is the completion of an algebraic closure of $\Rational_p$.
	We claim that the $\Psi$--pullback of $\rho_{B,\sigma}$ restricted to $\pi_A$
	is conjugate to the the induced representation $\rho_{A,\sigma}$
	of some Zariski dense representation
	$\rho_A\colon \pi_A\to\mathrm{PSL}(2,\Rational^\algcl)$,
	whose conjugacy class does not depend on 
	particular choices of $p$ or $\sigma$.
	With this claim,
	$\eta_\Psi$ is constructed as mapping the conjugacy class $[\rho_B]\in\mathcal{Y}_B$ to 
	$[\rho_A]\in\mathcal{Y}_A$.
	
	The claim can be derived quickly from Lemma \ref{profinite_correspondence_rho_pA},
	but we need two basic observations with $\mathrm{PSL}(2,\Rational^\algcl)$--representations
	of aspherical closed $3$--manifold groups 
	(or more generally, finitely generated $2$--$\mathrm{PD}_3$ groups).
	
	First, we observe that every representation 
	$\rho\colon\pi\to \mathrm{PSL}(2,\Rational^\algcl)$
	of an aspherical closed $3$--manifold group $\pi$
	admits a lift $\rho^\sharp\colon \mathrm{Fratt}_2(\pi)\to \mathrm{SL}(2,\Rational^\algcl)$
	restricted to the $2$--Frattini subgroup.
	In fact, the obstruction to lifting a $\mathrm{PSL}(2,\Rational^\algcl)$--representation $\rho$
	to $\mathrm{SL}(2,\Rational^\algcl)$ can be expressed as a second Stiefel--Whitney class
	$w_2(\rho)\in H^2(\pi;\Integral/2\Integral)$, 
	(see \cite[Remark 4.1]{Heusener--Porti_PSL}; compare \cite[Proposition 5.1]{DLSW_rep_vol}).
	Because of the natural homomorphism
	$H_1(\mathrm{Fratt}_2(\pi);\Integral/2\Integral)\to H_1(\pi;\Integral/2\Integral)$
	is obviously zero,
	the natural homomorphism
	$H_2(\pi;\Integral/2\Integral)\to H_2(\mathrm{Fratt}_2(\pi);\Integral/2\Integral)$
	is also zero, by the $\Integral/2\Integral$--Poincar\'e duality.
	So, there are no obstruction to lifting 
	$\rho$ restricted to $\mathrm{Fratt}_2(\pi)$.
	 
	Secondly, we observe that every Zariski dense representation
	$\rho\colon\pi\to \mathrm{PSL}(2,\Rational^\algcl)$
	of an aspherical closed $3$--manifold group $\pi$
	is uniquely determined by its restriction to $\mathrm{Fratt}_2(\pi')$,
	where $\pi'$ is some finite-index subgroup.
	This follows by a similar argument as used in proving 
	Lemma \ref{virtual_Frattini_character}.
	In fact, suppose that $\tilde{\rho}\colon \pi\to \mathrm{PSL}(2,\Rational^\algcl)$
	is a representation that equals $\rho$ restricted to $\mathrm{Fratt}_2(\pi')$.
	We take a lift $\rho^\sharp\colon\mathrm{Fratt}_2(\pi')\to\mathrm{SL}(2,\Rational^\algcl)$
	by the above observation,
	and take $g_1,g_2,g_3\in\mathrm{Fratt}_2(\pi)$ such that 
	$X_i=2\cdot\rho^\sharp(g_i)-\chi_{\rho^\sharp}(g_i)\cdot I$,
	$i=1,2,3$,
	span $\mathfrak{sl}(2,\Rational^\algcl)$ over $\Rational^\algcl$. 
	We may assume that $\pi'$ is normal in $\pi$, 
	possibly after passing to a further subgroup.
	Then we check 
	$\mathrm{Ad}_{\rho(u)}(X_i)=\mathrm{Ad}_{\tilde{\rho}(u)}(X_i)$,
	for $i=1,2,3$ and for all $u\in\pi$,
	the same way as in the proof of Lemma \ref{virtual_Frattini_character}.
	Since the adjoint representation  of $\mathrm{PSL}(2,\Rational^\algcl)$
	is an isomorphism
	onto $\mathrm{Aut}(\mathfrak{sl}(2,\Rational^\algcl))\cong\mathrm{SO}(3,\Rational^\algcl)$,
	we obtain $\rho(u)=\tilde{\rho}(u)$ for all $u\in\pi$.
	
	With the above observations,
	we take a finite-index subgroup $\pi_B'$ of $\mathrm{Fratt}_2(\pi_B)$ 
	assoicated to a fibered finite cover $M_B'\to M_B$ \cite[Theorem 9.2]{Agol_VHC}.
	Obtain the $\Psi$--corresponding finite index subgroup $\pi_A'$ of $\mathrm{Fratt}_2(\pi_A)$,
	so the associated finite cover $M'_A \to M_A$ is also fibered \cite{JZ}.
	By the first observation,
	we can lift $\rho_{B}$ restricted to $\pi_{B}'$
	as $\rho^\sharp_{B}\colon \pi_{B}'\to\mathrm{SL}(2,\Rational^\algcl)$.
	In particular, $\rho^\sharp_{B}$ is Zariski dense,
	and is unique up to a scalar representation factor $\pi_{B}'\to\{\pm1\}$.
	Then the $\Psi$--pullback of 
	$\rho^\sharp_{B,\sigma}\colon\pi_B'\to\mathrm{SL}(2,\Rational^\algcl)\to \mathrm{SL}(2,\Complex_p)$
	gives rise to a Zariski dense representation $\pi_A'\to\mathrm{SL}(2,\Complex_p)$
	with character valued in $\Rational^\algcl$ (Lemma \ref{profinite_correspondence_rho_pA}).
	It follows that the $\Psi$--pullback of
	$\rho_B\colon\pi_B\to\mathrm{PSL}(2,\Rational^\algcl)$
	is a Zariski dense representation 
	$\pi_A\to\mathrm{PSL}(2,\Complex_p)$ 
	with $\mathrm{Ad}$--character valued in $\Rational^\algcl$.
	Therefore, it can be realized with a representation
	$\rho_A\colon \pi_A\to\mathrm{PSL}(2,\Rational^\algcl)$,
	which is unique up to conjugacy.
	Moreover, any lift of $\rho_A$ restricted to $\pi_A'$	
	gives rise to a representation
	$\rho^\sharp_A\colon\pi_A'\to\mathrm{SL}(2,\Rational^\algcl)$
	which $\Psi$--corresponds to 
	$\rho^\sharp_B\colon\pi_B'\to\mathrm{SL}(2,\Rational^\algcl)$
	in the sense of Lemma \ref{profinite_correspondence_rho_pA}.
	Since 
	the $\mathrm{Hom}(\pi_A',\{\pm1\})$--equivalence class of $\rho^{\sharp}_A$ 
	is uniquely determined by the $\mathrm{Hom}(\pi_A',\{\pm1\})$--equivalence class of $\rho^\sharp_B$
	and $\Psi$  (Lemma \ref{profinite_correspondence_rho_pA}),
	we infer from the second observation that 
	the conjugacy class of $\rho_A$ is uniquely determined
	by the conjugacy class of $\rho_B$ and $\Psi$.
	This proves our claim in the construction of $\eta_\Psi$.
	
	The rest assertions in Theorem \ref{main_profinite_correspondence_PSL}
	regarding invariant trace fields and invariant quaternion algebras
	follow immediately from the properties in Lemma \ref{profinite_correspondence_rho_pA}
	and the virtual invariance of those objects (see Corollary \ref{virtual_invar_qa}).
\end{proof}

\begin{theorem}\label{main_profinite_invariance_vol}
	Assume Hypothesis \ref{Borel_hypothesis_p}.	
	If a pair of uniform lattices in $\mathrm{PSL}(2,\Complex)$
	have isomorphic profinite completions,
	then they have identical covolume,
	identical invariant trace fields in $\Complex$,
	and conjugate invariant quaternion algebras in $\mathrm{Mat}(2,\Complex)$.
	Moreover,
	they are either simultaneously arithmetic, 
	or simultaneously non-arithmetic.
\end{theorem}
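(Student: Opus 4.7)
The plan is to reduce the general statement to the essential pseudo-Anosov mapping torus case handled by Lemma~\ref{profinite_invariance_vol_pA}, and then use volume rigidity to pin down the discrete faithful representations within the correspondence $\eta_\Psi$ of Theorem~\ref{main_profinite_correspondence_PSL}.

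As a first reduction, Selberg's lemma together with the bijective correspondence of finite-index subgroups under $\Psi$ allow us to assume that the two uniform lattices are $\pi_A = \pi_1(M_A)$ and $\pi_B = \pi_1(M_B)$ for a pair of orientable closed hyperbolic $3$-manifolds. Following the same device used in the proof of Theorem~\ref{main_profinite_correspondence_PSL}, I would choose a fibered finite cover $M'_B \to M_B$ via Agol's virtual fibering theorem \cite{Agol_VHC} and pass to the $\Psi$-corresponding finite cover $M'_A \to M_A$, which is likewise fibered by the profinite invariance of fiberedness \cite{JZ}. Refining to common Frattini-type subgroups, we obtain a profinite isomorphism setting of pseudo-Anosov mapping tori $(M'_A, M'_B, \Psi')$ in the sense of Convention~\ref{profinite_isomorphism_setting_pA}.

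Let $\rho_B \colon \pi_B \to \mathrm{PSL}(2,\Complex)$ be a discrete faithful representation. Since matrix entries can be chosen algebraic over the invariant trace field, I may conjugate so that $\rho_B$ factors through $\mathrm{PSL}(2,\Rational^{\algcl})$. Theorem~\ref{main_profinite_correspondence_PSL} then produces a Zariski dense representation $\rho_A \colon \pi_A \to \mathrm{PSL}(2,\Rational^{\algcl})$ in the conjugacy class $\eta_\Psi([\rho_B])$. Because the construction of $\eta_\Psi$ is compatible with restriction to $\Psi$-corresponding finite-index subgroups, their Frattini lifts $\rho'_A$ and $\rho'_B$ on $\pi'_A$ and $\pi'_B$ form an $\eta_{\Psi'}$-corresponding pair of $\mathrm{SL}(2,\Rational^{\algcl})$-valued representations, so Lemma~\ref{profinite_invariance_vol_pA} applies and gives
\[
\left|\mathrm{Vol}_{\mathrm{SL}(2,\Complex)}(M'_A, \rho'_A)\right| = \left|\mathrm{Vol}_{\mathrm{SL}(2,\Complex)}(M'_B, \rho'_B)\right| = \mathrm{Vol}_{\Hyp^3}(M'_B).
\]
The volume rigidity of Example~\ref{hyp_manifold_rep_vol} forces $\mathrm{Vol}_{\Hyp^3}(M'_B) \leq \mathrm{Vol}_{\Hyp^3}(M'_A)$, and the symmetric argument using $\Psi^{-1}$ yields the reverse inequality. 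Equality then forces $\rho'_A$ to be discrete and faithful; dividing by the equal cover degrees $[M'_A:M_A] = [M'_B:M_B]$ gives $\mathrm{Vol}_{\Hyp^3}(M_A) = \mathrm{Vol}_{\Hyp^3}(M_B)$.

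With the discrete faithful representations now shown to correspond under $\eta_\Psi$, Theorem~\ref{main_profinite_correspondence_PSL} yields identical invariant trace fields $k \subset \Complex$ and conjugate invariant quaternion algebras in $\mathrm{Mat}(2,\Complex)$. For arithmeticity, I would invoke the classical criterion \cite[Chapter 8]{Maclachlan--Reid_book}: a uniform lattice in $\mathrm{PSL}(2,\Complex)$ is arithmetic iff $k$ has exactly one complex place, the invariant quaternion algebra is ramified at every real place, and the traces on the $2$-Frattini subgroup are algebraic integers in $k$. The first two conditions are immediate from the preserved invariant data, while integrality of traces is preserved because Lemma~\ref{profinite_correspondence_rho_pA} shows that the two representations are bounded at exactly the same non-Archimedean places of $k$. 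The main obstacle in this plan is the careful verification that $\eta_\Psi$ is compatible with restriction to $\Psi$-corresponding finite-index subgroups, which is needed to transfer the $\mathrm{PSL}(2,\Complex)$-valued discrete faithful representation to the $\mathrm{SL}(2,\Rational^{\algcl})$-valued setup of Lemma~\ref{profinite_invariance_vol_pA} through a Frattini lift handling the Stiefel--Whitney obstruction.
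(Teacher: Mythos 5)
Your proposal is correct and follows essentially the same strategy as the paper: reduce by Selberg's lemma to closed hyperbolic $3$-manifolds, pass to $\Psi$-corresponding fibered finite covers, apply Lemma~\ref{profinite_invariance_vol_pA} together with the volume-rigidity bound of Example~\ref{hyp_manifold_rep_vol} (and the symmetric two-sided inequality you spell out) to identify the discrete faithful representations in the correspondence, and then deduce arithmeticity via the Maclachlan--Reid criterion using the preserved boundedness at non-Archimedean places. The only cosmetic difference is that you route through Theorem~\ref{main_profinite_correspondence_PSL} and the $\eta_\Psi$ map at the $\mathrm{PSL}(2,\Rational^{\algcl})$ level, whereas the paper works directly with $\mathrm{SL}(2,\Rational^{\algcl})$-valued representations on the fibered cover via Lemma~\ref{profinite_correspondence_rho_pA}, which sidesteps the Stiefel--Whitney lifting issue and the restriction-compatibility of $\eta_\Psi$ that you correctly flag as the point needing verification.
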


\begin{proof}
	Possibly after passing to finite-index sublattices,
	we may assume the lattices torsion-free \cite[Chapter 7, \S 7.6, Corollary 4]{Ratcliffe_book}.
	Hence, it is equivalent to argue with the orientable closed hyperbolic $3$--manifolds
	obtained as the quotients of $\Hyp^3$ by the lattices.
		
	Denote by $M_A$ and $M_B$ the orientable closed hyperbolic $3$--manifolds.
	Suppose that $\Psi\colon\widehat{\pi}_A\to\widehat{\pi}_B$
	is an isomorphism between the profinite completions
	of their fundamental groups $\pi_A$ and $\pi_B$.
	Denote by $\Rational^\algcl$ the algebraic closure of $\Rational$ in $\Complex$.
	Take a $\Psi$--corresponding pair of finite-index subgroups $\pi'_A$ and $\pi'_B$
	associated to fibered finite covers $M'_A\to M_A$ and $M'_B\to M_B$, respectively.
	By Lemma \ref{profinite_invariance_vol_pA} (based on Hypothesis \ref{Borel_hypothesis_p})
	and Example \ref{hyp_manifold_rep_vol},
	we infer that $M'_A$ and $M'_B$ have identical hyperbolic volume,
	and moreover, any discrete faithful representation 
	$\rho'_B\colon \pi'_B\to\mathrm{SL}(2,\Rational^\algcl)\to\mathrm{SL}(2,\Complex)$
	corresponds to a discrete faithful representation 
	$\rho'_A\colon \pi'_A\to\mathrm{SL}(2,\Rational^\algcl)\to\mathrm{SL}(2,\Complex)$,
	up to $\mathrm{Hom}(\pi'_{A/B},\{\pm1\})$--equivalence,
	in the sense of Lemma \ref{profinite_correspondence_rho_pA}.
	It follows that $M_A$ and $M_B$ have identical hyperbolic volume,
	identical invariant trace fields in $\Complex$,
	and conjugate invariant quaternion algebras in $\mathrm{Mat}(2,\Complex)$
	(Lemma \ref{profinite_correspondence_rho_pA} and Corollary \ref{virtual_invar_qa}).
	
	If $M_B$ is arithmetic, $\rho'_B$ restricted to $\mathrm{Fratt}_2(\pi'_B)$
	is bounded at all the primes in the invariant trace field $k(M_B)=k(M_A)$,
	so $\rho'_A$ restricted to $\mathrm{Fratt}_2(\pi'_A)$ 
	also has the same property (Lemma \ref{profinite_correspondence_rho_pA}).
	This shows that the discrete faithful representation $\rho'_A$ 
	has integral trace for all $g\in\pi'_A$.
	Since the invariant trace field $k(M_A)=k(M_B)$ has exactly one complex place,
	and the invariant quaternion algebra $\mathscr{B}(M_A)\cong\mathscr{B}(M_B)$
	ramifies at all the real places of $k(M_A)=k(M_B)$,
	we infer that $M_A$ is also arithmetic \cite[Chapter 8, Theorem 8.3.2]{Maclachlan--Reid_book}.
	By symmetry, $M_A$ is non-arithmetic if $M_B$ is non-arithmetic.	
	To summarize,
	$M_A$ and $M_B$ are either simultaneously arithmetic, 
	or simultaneously non-arithmetic.
\end{proof}

\bibliographystyle{amsalpha}


\end{document}